\documentclass[reqno]{amsart}
 
\usepackage{enumitem}
\usepackage{bbm}
\usepackage{nicefrac}
\usepackage{hyperref}

\newcommand{\mailto}[1]{\href{mailto:#1}{\nolinkurl{#1}}}

\newcommand{\msc}[1]{\href{http://www.ams.org/msc/msc2010.html?t=&s=#1}{#1}}

%%%%%%%%%THEOREMS%%%%%%%%%%%%%%%%%%%%%%%%%%%%%%%%%%
\newtheorem{theorem}{Theorem}[section]
\newtheorem{definition}[theorem]{Definition}
\newtheorem{lemma}[theorem]{Lemma}
\newtheorem{proposition}[theorem]{Proposition}
\newtheorem{corollary}[theorem]{Corollary}

\newtheorem{remark}[theorem]{Remark}

%%%%%%%%%%%%%%FONTS%%%%%%%%%%%%%%%%%%%%%%%%%%%%%%%%
\newcommand{\R}{{\mathbb R}}
\newcommand{\N}{{\mathbb N}}
\newcommand{\Z}{{\mathbb Z}}
\newcommand{\C}{{\mathbb C}}

%%%%%%%%%%%%%%%%%%ABBRS%%%%%%%%%%%%%%%%%%%%%%%%%%%%%
\newcommand{\spr}[2]{\langle #1 , #2 \rangle}

\newcommand{\gS}{\mathfrak{S}}

\newcommand{\Qr}{\mathsf{q}}
\newcommand{\Wr}{\mathsf{w}}

\newcommand{\Pe}{\mathrm{S}}
\newcommand{\T}{\mathrm{T}}
\newcommand{\rI}{\mathrm{I}}
\newcommand{\pr}{\mathrm{P}}
\newcommand{\Sr}{\mathrm{s}}
\newcommand{\KIO}{\mathrm{K}}
\newcommand{\JIO}{\mathrm{J}}

\newcommand{\E}{\mathrm{e}}

\newcommand{\supp}{\mathrm{supp}}
\newcommand{\tr}{\mathrm{tr}}

\newcommand{\re}{\mathrm{Re}}
\newcommand{\loc}{{\mathrm{loc}}}
\newcommand{\cc}{{\mathrm{c}}}

\newcommand{\dom}[1]{\mathrm{dom}(#1)}
\newcommand{\ran}[1]{\mathrm{ran}(#1)}
\renewcommand{\ker}[1]{\mathrm{ker}(#1)}
\newcommand{\mul}[1]{\mathrm{mul}(#1)}

\newcommand{\ledot}{\,\cdot\,}
\newcommand{\redot}{\cdot\,}

\newcommand{\NL}{(0-)}

\newcommand{\qd}{{[1]}}

\newcommand{\Hast}{\dot{H}^1[0,L)}
\newcommand{\Hasto}{\dot{H}^1_0[0,L)}
\newcommand{\Hastoinf}{\dot{H}^1_0[0,\infty)}
\newcommand{\cH}{\mathcal{H}}

\newcommand{\G}{\mathcal{G}}

\newcommand{\dip}{\upsilon}

\newcommand{\id}{{\mathbbm 1}}

%%%%%%%%%%%%%%%%%%%%%%%%NUMBERING%%%%%%%%%%%%%%%%%%%%%%%%
\numberwithin{equation}{section}
%%%%%%%%%%%%%%%%%%%%%%%%%%%%%%%%%%%%%%%%%%%%%%%%%%%%%%%

\begin{document}

\title{Generalized indefinite strings with purely discrete spectrum}

\dedicatory{Dedicated to the memory of Sergey Nikolaevich Naboko (1950--2020)}

\author[J.\ Eckhardt]{Jonathan Eckhardt}
\address{Department of Mathematical Sciences\\ Loughborough University\\ Epinal Way\\ Loughborough\\ Leicestershire LE11 3TU \\ UK}
\email{\mailto{J.Eckhardt@lboro.ac.uk}}

\author[A.\ Kostenko]{Aleksey Kostenko}
\address{Faculty of Mathematics and Physics\\ University of Ljubljana\\ Jadranska ul.\ 21\\ 1000 Ljubljana\\ Slovenia\\ and 
Faculty of Mathematics\\ University of Vienna\\ Oskar-Morgenstern-Platz 1\\ 1090 Vienna\\ Austria}
%Institute for Analysis and Scientific Computing\\ Vienna University of Technology\\ Wiedner Hauptstra\ss e 8-10/101\\1040 Vienna\\ %Austria}
\email{\mailto{Aleksey.Kostenko@fmf.uni-lj.si}}
%\urladdr{\url{http://www.mat.univie.ac.at/~kostenko/}}

%\thanks{... (to appear)}
%\thanks{\href{http://dx.doi.org/...}{... (to appear)}}
%\thanks{\href{http://dx.doi.org/...}{... {\bf X} (20XX), no.~X, pp--pp}}

\thanks{{\it Research supported by the Austrian Science Fund (FWF) under Grants No.\ P30715 and I-4600 (A.K.) as well as by the Slovenian Research Agency (ARRS) under Grant No.\ N1-0137 (A.K.)}}

\keywords{Generalized indefinite strings, purely discrete spectrum}
\subjclass[2010]{\msc{34L20}, \msc{34L40}; Secondary \msc{45P05}, \msc{47A10}}

\begin{abstract}
We establish criteria for the spectrum of a generalized indefinite string to be purely discrete and to satisfy Schatten--von Neumann properties. 
The results can be applied to the isospectral problem associated with the conservative Camassa--Holm flow and to Schr\"odinger operators with $\delta'$-interactions.
\end{abstract}

\maketitle
 
% % % % % % % % % % % % % % % % % % % % % % %
% % % % % % % % % % % % % % % % % % % % % % %
\section{Introduction}
% % % % % % % % % % % % % % % % % % % % % % %
% % % % % % % % % % % % % % % % % % % % % % %

In this article, we are concerned with the spectral problem
 \begin{align}\label{eqnDEIndStr}
  -f'' = z\, \omega f + z^2 \dip f
\end{align}
for a {\em generalized indefinite string} $(L,\omega,\dip)$. 
This means that $0< L\leq\infty$, $\omega$ is a real distribution in $H^{-1}_{\loc}[0,L)$ and $\dip$ is a non-negative Borel measure on $[0,L)$. 
Spectral problems of this form arise as Lax (isospectral) operators in the study of various nonlinear completely integrable systems  (most notably the Camassa--Holm equation~\cite{caho93}, where finite time blow-up makes it necessary to allow coefficients of low regularity in~\eqref{eqnDEIndStr}; see \cite{ConservCH}, \cite{ConservMP}). 
The particular case of~\eqref{eqnDEIndStr} when the measure $\dip$ vanishes identically and $\omega$ is a non-negative Borel measure on $[0,L)$ is known as a {\em Krein string} and has a venerable history~\cite{dymc76}, \cite{kakr74}, \cite{kowa82}. 
To the best of our knowledge, spectral problems of the form~\eqref{eqnDEIndStr} with non-trivial coefficients $\dip$ first appeared in the work of M.~G.~Krein and H.~Langer~\cite{krla79}, \cite{la76} in their study of indefinite analogues of the moment problem. 
In the above generality, this spectral problem was introduced quite recently in~\cite{IndefiniteString}. 
Similar to Krein strings, which serve as a canonical model for operators with non-negative simple spectrum (roughly speaking, an arbitrary self-adjoint operator in a separable Hilbert space with non-negative simple spectrum is unitarily equivalent to a Krein string), the spectral problem~\eqref{eqnDEIndStr} is another canonical model (two other famous models are Jacobi matrices and $2\times 2$ canonical systems) for self-adjoint operators with simple spectrum; see~\cite{IndefiniteString}. 
The purpose of this article is to address the question under which conditions on the coefficients the spectrum $\sigma$ of~\eqref{eqnDEIndStr} is purely discrete (that is, consists of isolated eigenvalues without finite accumulation points) or satisfies
\begin{align}\label{eqnTrP}
\sum_{\lambda\in\sigma}\frac{1}{|\lambda|^p} < \infty
\end{align}
for some positive constant $p$. 
Our main results provide a complete characterization of generalized indefinite strings that give rise to purely discrete spectrum or that satisfy~\eqref{eqnTrP} for a constant $p>1$. 

In the case of Krein strings, the corresponding results have been known since the late 1950s. 
The discreteness criterion was first established by I.~S.~Kac and M.~G.~Krein in \cite{kakr58}, which has been published only in Russian. 
Due to positivity, one can employ a variational reformulation, which allows to reduce the question about discreteness to the study of the embedding of a form domain into the initial Hilbert space (compare~\cite{maz}). 
In this context, the Kac--Krein criterion is related to the Muckenhoupt inequalities~\cite{muc}. 

Removing the positivity assumption makes the corresponding considerations much more complicated. 
For instance, despite its fundamental importance, a discreteness criterion for $2\times 2$ canonical systems was found only recently by R.~Romanov and H.~Woracek~\cite{rowo19} (see also~\cite{rs}). 
Surprisingly enough (at least to the authors), a discreteness criterion for {\em indefinite strings} (the case when the measure $\dip$ vanishes identically and $\omega$ is a real-valued Borel measure on $[0,L)$) has essentially been available since the 1970s, when C.\ A.\ Stuart \cite{st73} established a compactness criterion for integral operators in the Hilbert space $L^2[0,\infty)$ of the form
\begin{align}\label{eqnJ}
\JIO\colon f\mapsto  \int_0^\infty \Qr(\max(\ledot,t))f(t)dt, 
\end{align}
for a function $\Qr$ in $L^2_{\loc}[0,\infty)$. 
This is because the operator $\JIO$ is closely related (see Section~\ref{secIntoper}) to the resolvent of the indefinite string when $\Qr$ is an anti-derivate of $\omega$. 
Due to this connection, criteria for the spectrum $\sigma$ to satisfy~\eqref{eqnTrP} follow from criteria for the operator $\JIO$ to belong to the Schatten--von Neumann class $\gS_p$. 
Such criteria have been established in the work of A.~B.~Aleksandrov, S.~Janson, V.~V.~Peller, and R.~Rochberg~\cite{AJPR}. 
 This connection will play a crucial role in our approach to obtain discreteness criteria for generalized indefinite strings. 
 However, let us mention that the discreteness criteria as well as the criteria when the spectrum $\sigma$ of a generalized indefinite string satisfies~\eqref{eqnTrP} can also be obtained by using the recent results of~\cite{rowo19}, as there is a bijective correspondence between generalized indefinite strings and $2\times 2$ canonical systems; see~\cite[Section~6]{IndefiniteString}. 
On the other hand, one may look at our results as an alternative way to obtain discreteness criteria for $2\times 2$ canonical systems.

In conclusion, let us sketch the content of the article. 
Section~\ref{sec:prelim} is of preliminary character and collects necessary notions and facts on the spectral theory of generalized indefinite strings. 
In Section~\ref{secIntoper} we are concerned with the resolvent at zero energy of the spectral problem 
 \begin{align}
  -f'' = z \chi f 
\end{align}
 when $\chi$ is a distribution in $H^{-1}_{\loc}[0,L)$. 
This operator turns out to be closely related to an integral operator of the form~\eqref{eqnJ}, which allows us to translate several known results from~\cite{chev70}, \cite{st73}, \cite{AJPR}. 
In the following section, we will then introduce a quadratic operator pencil associated with a generalized indefinite string, which enables us to connect the spectral problem~\eqref{eqnDEIndStr} with the operators studied in Section~\ref{secIntoper}. 
This connection allows us to derive our main results  in Section~\ref{secDis}; a number of discreteness criteria for generalized indefinite strings. 
Consequently, in Section~\ref{secAPP} we apply our findings to the isospectral problem of the conservative Camassa--Holm flow
 \begin{align}
 -f'' + \frac{1}{4} f = z\, \omega f + z^2 \dip f.
\end{align}
Section~\ref{sec:Delta} provides another application of our results to one-dimensional Hamiltonians with $\delta'$-interactions.
Finally, in Appendix~\ref{app:IntOp} we gather a number of known results from~\cite{chev70}, \cite{st73}, \cite{AJPR} about integral operators of the form~\eqref{eqnJ} in a way that makes them convenient for us to apply. 
Throughout this article, we adopt the point of view of linear relations when dealing with linear operators. 
For the convenience of the reader, we summarize basic notions about linear relations in Appendix~\ref{app:LinRel}.
 
% % % % % % % % % % % % % % % % % % % % % % %
\subsection*{Notation}
% % % % % % % % % % % % % % % % % % % % % % %

 Let us first introduce several spaces of functions and distributions. 
 For $L\in(0,\infty]$, we denote with $L^2_{\loc}[0,L)$, $L^2[0,L)$ and $L^2_{\cc}[0,L)$ the spaces of locally square integrable functions, square integrable functions and square integrable functions with compact support in $[0,L)$, respectively. 
 The space $\dot{L}^2_{\cc}[0,L)$ consists of all functions $f$ in $L^2_{\cc}[0,L)$ with zero mean, that is, such that 
 \begin{align}
    \int_0^L f(x)dx = 0.
 \end{align}
 When $L$ is finite, the space $\dot{L}^2[0,L)$ can be defined in a similar way. 
 Furthermore, we denote with $H^1_{\loc}[0,L)$, $H^1[0,L)$ and $H^1_{\cc}[0,L)$ the usual Sobolev spaces 
\begin{align}
 H^1_{\loc}[0,L) & =  \lbrace f\in AC_{\loc}[0,L) \,|\, f'\in L^2_{\loc}[0,L) \rbrace, \\
 H^1[0,L) & = \lbrace f\in H^1_{\loc}[0,L) \,|\, f,\, f'\in L^2[0,L) \rbrace, \\ 
 H^1_{\cc}[0,L) & = \lbrace f\in H^1[0,L) \,|\, \supp(f) \text{ compact in } [0,L) \rbrace.
\end{align}
 The space of distributions $H^{-1}_{\loc}[0,L)$ is the topological dual space of $H^1_{\cc}[0,L)$. 
 We note that the mapping $\Qr\mapsto\chi$, defined by
 \begin{align}
    \chi(h) = - \int_0^L \Qr(x)h'(x)dx, \quad h\in H^1_{\cc}[0,L),
 \end{align} 
 establishes a one-to-one correspondence between $L^2_{\loc}[0,L)$ and $H^{-1}_{\loc}[0,L)$. 
 The unique function $\Qr\in L^2_{\loc}[0,L)$ corresponding to some distribution $\chi\in H^{-1}_{\loc}[0,L)$ in this way will be referred to as the {\em normalized anti-derivative} of $\chi$.
 Finally, a distribution in $H^{-1}_{\loc}[0,L)$ is said to be {\em real} if its normalized anti-derivative is real-valued almost everywhere on $[0,L)$. 

 A particular kind of distributions in $H^{-1}_{\loc}[0,L)$ arises from Borel measures on the interval $[0,L)$.
 More precisely, if $\chi$ is a non-negative Borel measure on $[0,L)$, then we will identify it with the distribution in $H^{-1}_{\loc}[0,L)$ given by  
 \begin{align}
  h \mapsto \int_{[0,L)} h\,d\chi. 
 \end{align}
 The normalized anti-derivative $\Qr$ of such a $\chi$ is simply given by the left-continuous distribution function 
 \begin{align}\label{eqnAntiChi}
 \Qr(x)=\int_{[0,x)}d\chi
 \end{align}
 for almost all $x\in [0,L)$, as an integration by parts (use, for example, \cite[Exercise~5.8.112]{bo07} or \cite[Theorem~21.67]{hest65}) shows.  

 In order to be able to introduce a self-adjoint realization of the differential equation~\eqref{eqnDEIndStr} in a suitable Hilbert space later, we also define the function space  
\begin{align}
\Hast & = \begin{cases} \lbrace f\in H^1_{\loc}[0,L) \,|\, f'\in L^2[0,L),~ \lim_{x\rightarrow L} f(x) = 0 \rbrace, & L<\infty, \\ \lbrace f\in H^1_{\loc}[0,L) \,|\, f'\in L^2[0,L) \rbrace, & L=\infty, \end{cases} 
\end{align}
 as well as its linear subspace  
\begin{align}
 \Hasto & = \lbrace f\in \Hast \,|\, f(0) = 0 \rbrace, 
\end{align}
 which turns into a Hilbert space when endowed with the scalar product 
 \begin{align}\label{eq:normti}
 \spr{f}{g}_{\Hasto} = \int_0^L f'(x) g'(x)^\ast dx, \quad f,\, g\in\Hasto,
 \end{align}
 where we use a star to denote complex conjugation. 
  The space $\Hasto$ can be viewed as a completion of the space of all smooth functions which have compact support in $(0,L)$ with respect to the norm induced by~\eqref{eq:normti}. 
 In particular, the space $\Hasto$ coincides algebraically and topologically with the usual Sobolev space $H^1_0[0,L)$ when $L$ is finite. 
 Functions in $\Hasto$ are not necessarily bounded, but they satisfy the simple growth estimate 
 \begin{align}\label{eqnFuncHastGrow}
  |f(x)|^2 \leq x\biggl(1-\frac{x}{L}\biggr)\, \|f\|^2_{\Hasto}, \quad x\in[0,L),~f\in\Hasto.
 \end{align}
 Here we employ the convention that whenever an $L$ appears in a denominator, the corresponding fraction has to be interpreted as zero if $L$ is not finite.

% % % % % % % % % % % % % % % % % % % % % % %
% % % % % % % % % % % % % % % % % % % % % % %
\section{Generalized indefinite strings}\label{sec:prelim}
% % % % % % % % % % % % % % % % % % % % % % %
% % % % % % % % % % % % % % % % % % % % % % %

A generalized indefinite string is a triple $(L,\omega,\dip)$ such that $L\in(0,\infty]$, $\omega$ is a real distribution in $H^{-1}_{\loc}[0,L)$ and $\dip$ is a non-negative Borel measure on $[0,L)$.  
 Associated with such a generalized indefinite string is the inhomogeneous differential equation
 \begin{align}\label{eqnDEinho}
  -f''  = z\, \omega f + z^2 \dip f + \chi, 
 \end{align}
 where $\chi$ is a distribution in $H^{-1}_{\loc}[0,L)$ and $z$ is a complex spectral parameter. 
 Of course, this differential equation has to be understood in a weak sense:   
  A solution of~\eqref{eqnDEinho} is a function $f\in H^1_{\loc}[0,L)$ such that 
 \begin{align}
  f'\NL h(0) + \int_{0}^L f'(x) h'(x) dx = z\, \omega(fh) + z^2 \int_{[0,L)}fh\,d\dip +  \chi(h)
 \end{align}
 for all $h\in H^1_{\cc}[0,L)$ and a (unique) constant $f'\NL\in\C$. 
 All differential equations in this article will be of the form~\eqref{eqnDEinho} and we only refer to \cite{IndefiniteString} for further details. 

 Each generalized indefinite string $(L,\omega,\dip)$ gives rise to a self-adjoint linear relation $\T$ in the Hilbert space
 \begin{align}
 \cH = \Hasto\times L^2([0,L);\dip),
\end{align}
 which is endowed with the scalar product
\begin{align}
 \spr{f}{g}_{\cH} = \int_0^L f_1'(x) g_1'(x)^\ast dx + \int_{[0,L)} f_2(x) g_2(x)^\ast d\dip(x), \quad f,\, g\in \cH.
\end{align} 
 Here we denote the respective components of some vector $f\in\cH$ by adding subscripts, that is, with $f_1$ and $f_2$.  
 
 \begin{definition}\label{defLRT}
The linear relation $\T$ in the Hilbert space $\cH$ is defined by saying that some pair $(f,g)\in\cH\times\cH$ belongs to $\T$ if and only if  
\begin{align}\label{eqnDEre1}
-f_1'' & =\omega g_{1} + \dip g_{2}, &  \dip f_2 & =\dip g_{1}.
\end{align}
\end{definition}

 In order to be precise, we point out that the right-hand side of the first equation in~\eqref{eqnDEre1} has to be understood as the $H_{\loc}^{-1}[0,L)$ distribution   
\begin{align}
  h\mapsto \omega(g_1h) + \int_{[0,L)} g_2 h\, d\dip
\end{align} 
 and that the second equation means that $f_2$ is equal to $g_1$ almost everywhere on $[0,L)$ with respect to the measure $\dip$. 
  The linear relation $\T$ defined in this way turns out to be self-adjoint in the Hilbert space $\cH$; see~\cite[Section~4]{IndefiniteString} for more details. 

  Later on, we will use the following description of the resolvent of $\T$ that can be found in~\cite[Proposition 4.3]{IndefiniteString}.
  To this end, we recall that the Wronski determinant $W(\psi,\phi)$ of two solutions $\psi$, $\phi$ of the homogeneous differential equation
   \begin{align}\label{eqnDEho}
  -f'' = z\, \omega f + z^2 \dip f
\end{align}
 is defined as the unique number such that 
  \begin{align}
    W(\psi,\phi) =   \psi(x)\phi'(x)-\psi'(x)\phi(x)  
  \end{align}
 for almost all $x\in[0,L)$. 
 It is known that the Wronskian $W(\psi,\phi)$ is non-zero if and only if the functions $\psi$ and $\phi$ are linearly independent; see \cite[Corollary~3.3]{IndefiniteString}.
 
\begin{proposition}\label{propRes}
  If $z$ belongs to the resolvent set of $\T$, then one has   
  \begin{align}\begin{split}
   z (\T-z)^{-1}  g(x) & = \spr{g}{\G(x,\redot)^\ast}_{\cH} \begin{pmatrix} 1 \\ z \end{pmatrix} - g_1(x) \begin{pmatrix} 1 \\ 0 \end{pmatrix}, \quad x\in[0,L),  
 \end{split}\end{align} 
 for every $g\in\cH$, where the Green's function $\G$ is given by  
 \begin{align}
  \G(x,t) =  \begin{pmatrix} 1 \\ z \end{pmatrix} \frac{1}{W(\psi,\phi)} \begin{cases} \psi(x) \phi(t), & t\in[0,x), \\ \psi(t) \phi(x), & t\in[x,L),    \end{cases}
 \end{align}
 and $\psi$, $\phi$ are linearly independent solutions of the homogeneous differential equation~\eqref{eqnDEho} such that $\phi$ vanishes at zero, $\psi$ lies in $\dot{H}^1[0,L)$ and $z\psi$ lies in $L^2([0,L);\dip)$. 
 \end{proposition}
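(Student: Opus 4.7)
My plan is to verify the formula by direct substitution into the defining relation of $\T$ in Definition~\ref{defLRT}. Setting $h=(\T-z)^{-1}g$, the pair $(h,g+zh)$ lies in $\T$, which reads
\begin{align*}
 -h_1'' = \omega(g_1+zh_1)+\dip(g_2+zh_2), \qquad h_2=g_1+zh_1 \quad \dip\text{-almost everywhere}.
\end{align*}
Using the second equation to eliminate $h_2$ reduces the task to the scalar inhomogeneous problem
\begin{align*}
 -h_1''=z\omega h_1+z^2\dip h_1+\omega g_1+z\dip g_1+\dip g_2
\end{align*}
for $h_1\in\Hasto$ with $zh_1\in L^2([0,L);\dip)$; uniqueness of $h$ then follows from $z\in\rho(\T)$.

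Next I would introduce $F(x):=\spr{g}{\G(x,\ledot)^\ast}_\cH$ and set $f_1:=F-g_1$ and $f_2:=zF$, so that the proposed formula becomes $f=f_1\binom{1}{0}+F\binom{0}{z}$. The algebraic identity $f_2=z(g_1+f_1)$ is immediate from $f_1=F-g_1$. Multiplying the scalar ODE above by $z$ and substituting $f_1=zh_1$ recasts what remains as the single distributional identity
\begin{align*}
 -f_1''=z\omega F+z^2\dip F+z\dip g_2,
\end{align*}
since $z\omega f_1+z\omega g_1=z\omega F$ and analogously for the $\dip$-term.

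This last identity is a variation-of-parameters computation. Expanding the inner product with the explicit form of $\G$ yields
\begin{align*}
 F(x)=\frac{1}{W(\psi,\phi)}\bigl[\psi(x)A(x)+\phi(x)B(x)\bigr],
\end{align*}
where
\begin{align*}
 A(x)=\int_0^x g_1'\phi'\,dt+z\int_{[0,x)}g_2\phi\,d\dip, \qquad B(x)=\int_x^L g_1'\psi'\,dt+z\int_{[x,L)}g_2\psi\,d\dip.
\end{align*}
Two Wronskian cancellations drive the proof. First, the Lebesgue parts of $\psi A'+\phi B'$ collapse via $\psi\phi'-\phi\psi'=W$ while the $\dip$-parts cancel outright, giving $\psi A'+\phi B'=Wg_1'$; hence $F'=g_1'+W^{-1}(\psi'A+\phi'B)$, so $f_1'=W^{-1}(\psi'A+\phi'B)$. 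Second, in $\psi'A'+\phi'B'$ the Lebesgue parts now cancel while the $\dip$-parts combine via $\psi'\phi-\phi'\psi=-W$ to produce $-zWg_2$ against $\dip$. Combining with $-\psi''=z\omega\psi+z^2\dip\psi$ and its $\phi$-analogue gives $f_1''=-z\omega F-z^2\dip F-z\dip g_2$, as required.

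The main obstacle is the distributional bookkeeping: interpreting products such as $\omega F$ and $\dip F$ as elements of $H^{-1}_\loc[0,L)$ and tracking the cancellations between Lebesgue and $\dip$-pieces of $A'$ and $B'$, which is most cleanly handled by testing against $h\in H^1_\cc[0,L)$ and integrating by parts. One further needs to check $f\in\cH$: the condition $\phi(0)=0$ delivers $F(0)=0$ and hence $f_1(0)=0$, while $\psi\in\dot{H}^1[0,L)$ and $z\psi\in L^2([0,L);\dip)$ transfer to $F$ via the asymptotics $F(x)\sim W^{-1}\psi(x)A(L)$ near $L$ (since $B(L)=0$). Finally, $W(\psi,\phi)\neq 0$ follows from $z\in\rho(\T)$, since otherwise $\psi$ and $\phi$ would be proportional and supply a nontrivial element of $\ker{\T-z}$.
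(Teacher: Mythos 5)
Your verification is essentially correct, but it is worth noting that the paper does not prove this proposition at all: it is quoted verbatim from \cite[Proposition~4.3]{IndefiniteString}, so any self-contained argument is by necessity a ``different route'' from the text. Your route is the natural one — plug the ansatz into Definition~\ref{defLRT}, eliminate $h_2$ via the second relation, and verify the resulting scalar identity $-f_1''=z\omega F+z^2\dip F+z\dip g_2$ by variation of parameters — and the two Wronskian cancellations you isolate ($\psi A'+\phi B'=Wg_1'$ and $\psi'A'+\phi'B'=-zWg_2\,\dip$) are exactly the computational heart of the matter. Three points deserve more than the passing mention you give them. First, you take the existence of a solution $\psi$ with $\psi\in\dot H^1[0,L)$ and $z\psi\in L^2([0,L);\dip)$ for granted; in the cited reference this Weyl-type solution has to be constructed from $z\in\rho(\T)$ before the Green's function can even be written down. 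Second, the identity $f_1''=W^{-1}(\psi''A+\phi''B+\psi'A'+\phi'B')$ is delicate when $\omega$ is a genuine $H^{-1}_{\loc}$ distribution: $\psi'$ is only in $L^2_{\loc}$, $A$ and $B$ carry $\dip$-jump parts, and the product $\psi''A$ is not defined pointwise, so the computation must be run through the quasi-derivative $f^{\qd}=f'+\Qr f$ (as in \cite[Eq.~(3.6)]{IndefiniteString}) rather than naively; your remark that one should test against $h\in H^1_{\cc}[0,L)$ is the right instinct but is where the real work sits. Third, membership $F-g_1\in\Hasto$ requires showing $\phi'B\in L^2$ near $L$, which does not follow from $B(x)\to 0$ alone since $\phi'$ need not be square integrable up to $L$; one needs a Cauchy--Schwarz estimate on $B$ against the tails of $g_1'$, $\psi'$ and $g_2$, $\psi$. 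None of these is a fatal gap, but they are the reason the reference devotes a separate proposition to this formula.
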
    
  
 For the sake of simplicity, we shall always mean the spectrum of the corresponding linear relation $\T$ when we refer to the spectrum $\sigma$ of a generalized indefinite string $(L,\omega,\dip)$. 
 The same convention also applies to the various spectral types. 
 In particular, we say that the spectrum $\sigma$ of a generalized indefinite string $(L,\omega,\dip)$ is purely discrete if the linear relation $\T$ has purely discrete spectrum.

% % % % % % % % % % % % % % % % % % % % % % %
% % % % % % % % % % % % % % % % % % % % % % %
\section{Some integral operators in \texorpdfstring{$\Hasto$}{H01[0,L)}}\label{secIntoper}
% % % % % % % % % % % % % % % % % % % % % % %
% % % % % % % % % % % % % % % % % % % % % % %

 Throughout this section, let $\chi$ be a distribution in $H^{-1}_{\loc}[0,L)$ and denote with $\Qr$ its normalized anti-derivative.
 We introduce the linear relation $\KIO_\chi$ in $\Hasto$ by defining that a pair $(g,f)\in\Hasto\times\Hasto$ belongs to $\KIO_\chi$ if and only if  
  \begin{align}\label{eqnKchi}
   -f'' = \chi g,
  \end{align}
 where equality again has to be understood in a distributional sense.  

 \begin{proposition}\label{propKIO}
   The linear relation $\KIO_\chi$ is (the graph of) a densely defined closed linear operator with core $\Hasto\cap H_{\cc}^1[0,L)$ such that 
   \begin{align}\label{eqnKIOchiIO}
     \KIO_\chi g(x) =  \chi(\delta_x g), \quad x\in[0,L), 
   \end{align}
   for all $g\in\Hasto\cap H_{\cc}^1[0,L)$, where the kernel function $\delta_x$  is defined by 
  \begin{align}\label{eq:Kernel}
   \delta_x(t) = \min(x,t) \biggl(1-\frac{\max(x,t)}{L}\biggr), \quad x,\,t\in[0,L).
  \end{align}
 \end{proposition}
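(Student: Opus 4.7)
The plan is to establish the three assertions of the proposition in sequence: that $K_0 g(x) := \chi(\delta_x g)$ defines a map from $\mathcal D_0 := \Hasto \cap H^1_{\cc}[0,L)$ into $\Hasto$ whose graph sits inside $\KIO_\chi$; that $\KIO_\chi$ is the graph of a closed linear operator; and that $\mathcal D_0$ is a core.

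First I would verify the explicit formula on $\mathcal D_0$. The pairing $\chi(\delta_x g)$ is unambiguous because $\delta_x(\cdot)$ is piecewise linear (hence Lipschitz) and $g$ has compact support, so $\delta_x g \in H^1_{\cc}[0,L)$. Substituting $\chi(h) = -\int_0^L \Qr(t) h'(t)\, dt$ and splitting the integral at $t=x$ using the piecewise formula for $\delta_x$ produces
\begin{align*}
K_0 g(x) = -\Bigl(1-\tfrac{x}{L}\Bigr)\int_0^x \Qr(t)\bigl(tg(t)\bigr)'\, dt - x\int_x^L \Qr(t)\Bigl(\bigl(1-\tfrac{t}{L}\bigr)g(t)\Bigr)'\, dt.
\end{align*}
From this, $K_0 g(0) = 0$ and (when $L<\infty$) $K_0 g(L-) = 0$ are immediate, and differentiating in $x$ shows that $(K_0 g)'$ is of the form $-\Qr g$ plus locally bounded, compactly supported corrections; in particular $(K_0 g)' \in L^2[0,L)$, so $K_0 g \in \Hasto$. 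A direct distributional computation of $-(K_0 g)''$ against a test function $h \in H^1_{\cc}[0,L)$ collapses all terms to $\chi(gh)$, giving $-(K_0 g)'' = \chi g$ and thus $(g,K_0 g) \in \KIO_\chi$.

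Second, I would show $\KIO_\chi$ is the graph of a closed linear operator. For single-valuedness: if $(g,f_1),(g,f_2) \in \KIO_\chi$, then $f_1 - f_2 \in \Hasto$ solves $-f''=0$, so it is affine, and the conditions embedded in $\Hasto$ (vanishing at $0$ and at $L$ when $L<\infty$, or $L^2$-integrability of $f'$ when $L=\infty$) force $f_1 = f_2$. Density of the domain follows from $\mathcal D_0 \subseteq \dom{\KIO_\chi}$ together with density of $\mathcal D_0$ in $\Hasto$. For closedness: if $(g_n,f_n) \in \KIO_\chi$ with $(g_n,f_n) \to (g,f)$ in $\Hasto \times \Hasto$, then I pass to the limit in the identity $\int_0^L f_n' h'\, dx = \chi(g_n h)$ for each $h \in H^1_{\cc}[0,L)$. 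The left-hand side converges by the $L^2$-convergence of derivatives; for the right-hand side, the growth estimate \eqref{eqnFuncHastGrow} upgrades $\Hasto$-convergence of $g_n$ to uniform convergence on $\supp h$, so $g_n h \to g h$ in $H^1_{\cc}[0,L)$ and continuity of $\chi$ yields $\chi(g_n h) \to \chi(g h)$.

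Third, the core property, which is the main obstacle. Given $(g,f) \in \KIO_\chi$, I would take $g_n \in \mathcal D_0$ with $g_n \to g$ in $\Hasto$ (possible since smooth compactly supported functions are dense in $\Hasto$ by construction) and set $f_n := K_0 g_n$. If $(f_n)$ can be shown Cauchy in $\Hasto$, then by closedness together with single-valuedness its limit necessarily equals $f$, and we are done. The most efficient route to Cauchyness is the isometric identification $\Hasto \ni h \mapsto h' \in L^2[0,L)$ (with image $\dot L^2[0,L)$ when $L<\infty$): a short calculation starting from the explicit form of $(K_0 g)'$ derived above shows
\begin{align*}
(K_0 g)'(x) = -\int_0^L \Qr(\max(x,t))\, g'(t)\, dt,
\end{align*}
so under this unitary equivalence $K_0$ conjugates to $-\JIO$, the integral operator of the form~\eqref{eqnJ}. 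The core property for $\KIO_\chi$ then reduces to the corresponding classical statement for $\JIO$ restricted to $L^2_{\cc}[0,L)$, which is gathered in Appendix~\ref{app:IntOp}. The main obstacle is precisely this step: neither $K_0$ nor its closure need be bounded on $\Hasto$, so Cauchyness is not formal and genuinely requires the integral-operator reformulation.
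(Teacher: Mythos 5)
Your first step (the explicit formula, the verification that $K_0 g\in\Hasto$, and the inclusion of the graph of $K_0$ in $\KIO_\chi$) and your second step (single-valuedness, density of the domain, closedness of $\KIO_\chi$ via the growth estimate~\eqref{eqnFuncHastGrow}) are correct. The genuine gap is in the third step, the core property, which you rightly call the main obstacle but do not actually close. Your plan is to take an \emph{arbitrary} sequence $g_n\in\Hasto\cap H^1_{\cc}[0,L)$ with $g_n\to g$ in $\Hasto$ and to show that $f_n=K_0g_n$ is Cauchy; but for an unbounded operator (and you concede $K_0$ need not be bounded) this is false for generic approximating sequences --- the core property only asserts the \emph{existence} of a sequence converging in graph norm, and such a sequence has to be constructed. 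The fallback you offer, namely that the statement ``reduces to the corresponding classical statement for $\JIO$ restricted to $L^2_{\cc}[0,L)$, which is gathered in Appendix~\ref{app:IntOp}'', is unsupported: the appendix records boundedness, compactness and Schatten-class criteria for $\JIO$ and the closability of $\JIO_L$, but no statement identifying the maximal (distributionally defined) operator with the closure of its restriction to compactly supported functions --- that identification is essentially the proposition itself. (A further minor slip: your displayed formula $(K_0g)'(x)=-\int_0^L\Qr(\max(x,t))g'(t)\,dt$ is only valid for $L=\infty$; for finite $L$ the unitary conjugate of $K_0$ is $\pr\JIO_L\pr$, with a rank-one correction, cf.\ the proof of Theorem~\ref{thmKchicompactfin}.)

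The paper avoids the approximation problem entirely by a duality argument. Writing $\KIO_{\chi,0}$ for your $K_0$, it establishes the chain $\KIO_{\chi,0}^\ast\subseteq\KIO_{\chi^\ast}\subseteq\KIO_\chi^\ast\subseteq\KIO_{\chi,0}^\ast$, which forces equality throughout; applying this also with $\chi^\ast$ in place of $\chi$ gives at once that $\KIO_\chi=\KIO_{\chi^\ast,0}^\ast$ is a closed operator and that $\overline{\KIO_{\chi,0}}=\KIO_{\chi,0}^{\ast\ast}=\KIO_{\chi^\ast}^\ast=\KIO_\chi$, i.e.\ the core property. The substantive ingredient is the middle inclusion $\KIO_{\chi^\ast}\subseteq\KIO_\chi^\ast$, proved by a Lagrange-type identity for pairs $(g,f)\in\KIO_\chi$ and $(g_\ast,f_\ast)\in\KIO_{\chi^\ast}$ whose boundary term at $L$ is shown to vanish using~\eqref{eqnFuncHastGrow}. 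Some such boundary analysis for \emph{arbitrary} elements of $\dom{\KIO_\chi}$ (not just compactly supported ones) is exactly what your proposal is missing; without it, the reduction to the integral operator $\JIO$ only transports the problem, it does not solve it.
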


\begin{proof}
  We define the linear operator $\KIO_{\chi,0}$ with domain $\Hasto\cap H_{\cc}^1[0,L)$ by
  \begin{align*}
    \KIO_{\chi,0} h(x)= \chi(\delta_x h) = -\int_0^L \Qr(t)(\delta_xh)'(t)dt, \quad x\in[0,L),
  \end{align*}
 for $h\in\Hasto\cap H_{\cc}^1[0,L)$.
  In order to verify that the function $\KIO_{\chi,0}h$ indeed belongs to $\Hasto$, one first computes that 
 \begin{align*}
   \chi(\delta_x h) & =  \int_0^x \int_0^t \Qr(s) h'(s)ds\, dt  - \int_0^x \Qr(t)h(t)dt  \\
     & \qquad\qquad - x\int_0^L \Qr(t)h'(t)dt + \frac{x}{L} \int_0^L \Qr(t)(th'(t)+h(t))dt
 \end{align*}
 for all $x\in [0,L)$. 
 This shows that the function $x\mapsto\chi(\delta_x h)$ is locally absolutely continuous on $[0,L)$ with square integrable derivative given by 
 \begin{align}\label{eqnKIOchider}
      - \Qr(x)h(x) - \int_x^L \Qr(t)h'(t)dt + \frac{1}{L} \int_0^L \Qr(t)(th'(t)+h(t))dt
 \end{align}
 for almost every $x\in [0,L)$.
 Since it also shows that $\chi(\delta_x h)\rightarrow 0$ as $x\rightarrow L$ when $L$ is finite and that $\chi(\delta_0 h) = 0$, we conclude that $\KIO_{\chi,0}h$ belongs to $\Hasto$.
 By using the expression in~\eqref{eqnKIOchider} for the derivative of $\KIO_{\chi,0}h$, one verifies that 
 \begin{align*}
   -(\KIO_{\chi,0}h)'' = \chi h
 \end{align*}
  in a distributional sense, which implies that (the graph of) $\KIO_{\chi,0}$ is contained in $\KIO_{\chi}$. 
 Since the operator $\KIO_{\chi,0}$ is densely defined, its adjoint $\KIO_{\chi,0}^\ast$ is a closed linear operator in $\Hasto$.
 Hence, for the remaining claims it suffices to prove that 
 \begin{align*}
    \KIO_{\chi,0}^\ast \subseteq \KIO_{\chi^\ast} \subseteq \KIO_\chi^\ast \subseteq \KIO_{\chi,0}^\ast,
 \end{align*}
 where the last inclusion is evident as $\KIO_{\chi,0}\subseteq\KIO_\chi$.
 In order to verify the first inclusion, suppose that $(g,f)\in\KIO_{\chi,0}^\ast$ and let $h\in\Hasto\cap H_{\cc}^1[0,L)$.
 From the expression for the derivative of $\KIO_{\chi,0} h$ in~\eqref{eqnKIOchider} and an integration by parts, we get
 \begin{align*}
   \int_0^L f'(t)h'(t)^\ast dt  = \spr{f}{h}_{\Hasto} & = \spr{g}{\KIO_{\chi,0} h}_{\Hasto} \\
   & = \lim_{x\rightarrow L} \int_0^x g'(t)(\KIO_{\chi,0}h)'(t)^\ast dt \\
   & = \lim_{x\rightarrow L} - \int_0^x \Qr(t)^\ast (gh^\ast)'(t)dt
    = \chi^\ast(gh^\ast). 
 \end{align*}
 As it does not matter that all our test functions $h$ vanish at zero, this shows that the pair $(g,f)$ belongs to $\KIO_{\chi^\ast}$. 
 For the second inclusion, we need to prove that 
 \begin{align*}
   \spr{f}{g_\ast}_{\Hasto} = \spr{g}{f_\ast}_{\Hasto}
 \end{align*}
 when $(g_\ast,f_\ast)\in\KIO_{\chi^\ast}$ and $(g,f)\in\KIO_{\chi}$. 
 To this end, we first note that the respective differential equations that the pairs $(g_\ast,f_\ast)$ and $(g,f)$ satisfy entail that 
 \begin{align*}
   f_\ast'(x) + \Qr(x)^\ast g_\ast(x) & = d_{\ast} + \int_0^x \Qr(t)^\ast g_\ast'(t)dt =: f^{\qd}_\ast(x), \\
   f'(x) + \Qr(x) g(x) & = d + \int_0^x \Qr(t) g'(t)dt =: f^{\qd}(x),
 \end{align*}
 for almost all $x\in[0,L)$ and some constants $d$, $d_\ast\in\C$; see \cite[Equation~(3.6)]{IndefiniteString}. 
 Integration by parts then gives
\begin{align*}
  \int_0^x f'(t) g_\ast'(t)^\ast dt  - \int_0^x g'(t)f_\ast'(t)^\ast dt = \bigl. f^{\qd}(t) g_\ast(t)^\ast - g(t) f^{\qd}_\ast(t)^\ast \bigr|_{t=0}^x 
\end{align*}
 for every $x\in[0,L)$. 
  This clearly implies that 
\begin{align*}
  \spr{f}{g_\ast}_{\Hasto} - \spr{g}{f_\ast}_{\Hasto} = \lim_{x\rightarrow L} f^{\qd}(x) g_\ast(x)^\ast - g(x) f^{\qd}_\ast(x)^\ast
\end{align*}
and we are left to verify that the limit (which is already known to exist) is zero. 
However, this follows from the fact that the function 
\begin{align*}
  \frac{|f^{\qd}(x) g_\ast(x)^\ast - g(x) f^{\qd}_\ast(x)^\ast|^2}{x\bigl(1-\frac{x}{L}\bigr)} =  \frac{|f'(x) g_\ast(x)^\ast - g(x) f'_\ast(x)^\ast|^2}{x\bigl(1-\frac{x}{L}\bigr)}
\end{align*}
is integrable near $L$ due to the estimate in~\eqref{eqnFuncHastGrow} applied to $g_\ast$ and $g$. 
\end{proof}
 
  In the course of the proof of Proposition~\ref{propKIO}, we found the adjoint of $\KIO_\chi$. 
 
  \begin{corollary}\label{corKIOadj}
    The adjoint of the operator $\KIO_\chi$ is given by 
    \begin{align}
      \KIO_\chi^\ast = \KIO_{\chi^\ast}.
    \end{align}
    In particular, the operator $\KIO_\chi$ is self-adjoint when the distribution $\chi$ is real.
  \end{corollary}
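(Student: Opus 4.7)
The plan is essentially to read the corollary off from the proof of Proposition~\ref{propKIO} that has just been completed, since the necessary work is already contained there. In that proof one established the cyclic chain of inclusions
\begin{align*}
\KIO_{\chi,0}^\ast \subseteq \KIO_{\chi^\ast} \subseteq \KIO_\chi^\ast \subseteq \KIO_{\chi,0}^\ast,
\end{align*}
so all four linear relations must coincide. The outer identification $\KIO_{\chi,0}^\ast = \KIO_\chi^\ast$ is the core statement from Proposition~\ref{propKIO}, while the middle identity $\KIO_{\chi^\ast} = \KIO_\chi^\ast$ is exactly the first claim of the corollary.

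For the second claim I would simply note that if $\chi$ is real in the sense defined in the introduction, i.e.\ its normalized anti-derivative $\Qr$ is real-valued almost everywhere, then $\chi^\ast = \chi$ as distributions in $H^{-1}_{\loc}[0,L)$; the defining relation~\eqref{eqnKchi} therefore yields $\KIO_{\chi^\ast} = \KIO_\chi$, and combining with the first claim gives $\KIO_\chi^\ast = \KIO_\chi$.

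Since this is bookkeeping, there is no genuine obstacle. If one wanted to prove the corollary from scratch without piggybacking on Proposition~\ref{propKIO}, the natural route would be to check the symmetry relation
\begin{align*}
\spr{\KIO_\chi f}{g}_{\Hasto} = \spr{f}{\KIO_{\chi^\ast} g}_{\Hasto}
\end{align*}
for $f,g$ in the core $\Hasto\cap H^1_{\cc}[0,L)$ via the integration-by-parts calculation used in the proof above, and then upgrade this to equality of adjoints using that $\KIO_{\chi,0}$ is a core for $\KIO_\chi$. The only mildly delicate point, common to both approaches, is handling the boundary behaviour at $L$ in the integration by parts; this is controlled by the growth estimate~\eqref{eqnFuncHastGrow}, which ensures that the Wronskian-type boundary term vanishes in the limit $x\to L$.
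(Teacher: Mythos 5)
Your argument is exactly the paper's: the corollary is read off from the chain of inclusions $\KIO_{\chi,0}^\ast \subseteq \KIO_{\chi^\ast} \subseteq \KIO_\chi^\ast \subseteq \KIO_{\chi,0}^\ast$ established in the proof of Proposition~\ref{propKIO}, which forces all four relations to coincide, and the self-adjointness claim follows since a real distribution satisfies $\chi^\ast=\chi$. This matches the paper, which offers no separate proof beyond the remark that the adjoint was already found in the course of proving Proposition~\ref{propKIO}.
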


  An inspection of the definition of the operator $\KIO_\chi$ also proves the following. 

   \begin{corollary}\label{corKIOform}
    For all functions $g$, $h\in\Hasto\cap H_{\cc}^1[0,L)$ one has  
    \begin{align}
      \spr{\KIO_\chi g}{h}_{\Hasto} = \chi(gh^\ast).
    \end{align}
  \end{corollary}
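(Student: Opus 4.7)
The plan is to use the explicit formula for $(\KIO_\chi g)'$ derived in the proof of Proposition~\ref{propKIO}, namely \eqref{eqnKIOchider}, plug it directly into the integral defining $\spr{\KIO_\chi g}{h}_{\Hasto}$, and simplify by Fubini and the Leibniz rule. Since $h\in H^1_{\cc}[0,L)$, all support/integrability issues are trivial, so this reduces to a bookkeeping calculation.

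Concretely, denote by $C=\tfrac{1}{L}\int_0^L \Qr(t)(tg'(t)+g(t))dt$ the constant occurring in \eqref{eqnKIOchider}. Then
\[
\spr{\KIO_\chi g}{h}_{\Hasto} = \int_0^L \biggl[-\Qr(x)g(x) - \int_x^L \Qr(t)g'(t)dt + C\biggr] h'(x)^\ast dx.
\]
The contribution of $C$ is $C\int_0^L h'(x)^\ast dx$, which vanishes because $h(0)=0$ and $h$ is compactly supported in $[0,L)$ (covering both the finite and infinite $L$ cases uniformly). In the double-integral piece, Fubini's theorem lets me swap the order of integration to get
\[
-\int_0^L \Qr(t)g'(t)\int_0^t h'(x)^\ast dx\, dt = -\int_0^L \Qr(t)g'(t)h(t)^\ast dt,
\]
again using $h(0)=0$. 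Adding this to the remaining term $-\int_0^L \Qr(x)g(x)h'(x)^\ast dx$ and invoking the Leibniz rule $(gh^\ast)' = g'h^\ast + g(h^\ast)'$ collapses everything to
\[
-\int_0^L \Qr(x)(gh^\ast)'(x)\, dx.
\]

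To close the argument I need that $gh^\ast$ is a legitimate test function for $\chi$, i.e., lies in $H^1_{\cc}[0,L)$. But $g$ and $h$ are both continuous and compactly supported in $[0,L)$, and $H^1_{\cc}[0,L)$ is closed under pointwise multiplication, so $gh^\ast\in H^1_{\cc}[0,L)$. Therefore the last displayed expression equals $\chi(gh^\ast)$ by the very definition of $\Qr$ as the normalized anti-derivative of $\chi$, giving the claimed identity.

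There is no real obstacle here; the only mild care is verifying that the constant term cleanly drops out (handled uniformly by the compact support of $h$) and that the Fubini swap is justified, which is immediate since $\Qr\in L^2_{\loc}[0,L)$ and the integrand is supported in a compact subset of $[0,L)^2$ once $h$ has compact support.
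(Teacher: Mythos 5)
Your proof is correct and follows essentially the route the paper intends: the paper dismisses this corollary as an ``inspection of the definition,'' and the key identity $\spr{\KIO_\chi g}{h}_{\Hasto}=-\int_0^L \Qr(x)(gh^\ast)'(x)\,dx=\chi(gh^\ast)$ already appears explicitly (via an integration by parts rather than your Fubini swap) in the verification of the first adjoint inclusion in the proof of Proposition~\ref{propKIO}. Your bookkeeping, including the vanishing of the constant term and the observation that $gh^\ast\in H^1_{\cc}[0,L)$, is accurate.
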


 The next result about boundedness of the operator $\KIO_\chi$ when $\chi$ is a non-negative Borel measure on $[0,L)$ will be useful in Section~\ref{secPencil}.

  \begin{corollary}\label{corKIOincl}
   Suppose that $\chi$ is a non-negative Borel measure on $[0,L)$. 
   The operator $\KIO_\chi$ is bounded if and only if the inclusion $\rI_\chi\colon \Hasto\rightarrow L^2([0,L);\chi)$ is bounded.
   In this case, the adjoint of $\rI_\chi$ is given by 
   \begin{align}
     \rI_\chi^\ast g(x) = \int_{[0,L)} \delta_x g\, d\chi, \quad x\in [0,L), 
   \end{align}
   for all functions $g\in L^2([0,L);\chi)$ and one has $\KIO_\chi = \rI_\chi^\ast\rI_\chi$.  
  \end{corollary}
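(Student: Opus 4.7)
The plan hinges on recognizing $\delta_x$ as the reproducing kernel of $\Hasto$. A direct computation gives $\delta_x'(t) = 1 - x/L$ on $[0, x)$ and $\delta_x'(t) = -x/L$ on $(x, L)$; splitting $\int_0^L h'(t) \delta_x'(t)^\ast dt$ at $t = x$ and using the boundary conditions built into $\Hasto$ (namely $h(0) = 0$ together with $h(L^-) = 0$ when $L < \infty$, and the convention $x/L = 0$ when $L = \infty$) then shows $\spr{h}{\delta_x}_{\Hasto} = h(x)$ for every $h \in \Hasto$. In particular, each $\delta_x$ belongs to $\Hasto$ with $\|\delta_x\|_{\Hasto}^2 = \delta_x(x) = x(1-x/L)$.

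Assume first that $\rI_\chi$ is bounded. Then for every $g \in L^2([0,L);\chi)$ the linear functional $h \mapsto \int h g^\ast d\chi$ is bounded on $\Hasto$, so the Riesz representation theorem yields a unique element $\rI_\chi^\ast g \in \Hasto$ with $\spr{h}{\rI_\chi^\ast g}_{\Hasto} = \int h g^\ast d\chi$. Specializing $h = \delta_y$ and using the identity $\spr{\delta_y}{F}_{\Hasto} = F(y)^\ast$ on the left-hand side immediately reads off the claimed formula $\rI_\chi^\ast g(y) = \int_{[0,L)} \delta_y\, g\, d\chi$---crucially, no Fubini interchange is required. Applying this formula with $g = \rI_\chi h$ for $h \in \Hasto \cap H_{\cc}^1[0,L)$ and invoking~\eqref{eqnKIOchiIO} yields $(\rI_\chi^\ast \rI_\chi h)(y) = \int_{[0,L)} \delta_y h\, d\chi = \chi(\delta_y h) = \KIO_\chi h(y)$. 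Since this subspace is a core for the closed operator $\KIO_\chi$ by Proposition~\ref{propKIO} while $\rI_\chi^\ast \rI_\chi$ is everywhere defined and bounded, passing to closures gives $\KIO_\chi = \rI_\chi^\ast \rI_\chi$ on all of $\Hasto$, so in particular $\KIO_\chi$ is bounded.

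For the converse, Corollary~\ref{corKIOform} applied on the diagonal $g = h$ yields $\|\rI_\chi h\|_{L^2([0,L);\chi)}^2 = \chi(|h|^2) = \spr{\KIO_\chi h}{h}_{\Hasto} \leq \|\KIO_\chi\|\,\|h\|_{\Hasto}^2$ for every $h \in \Hasto \cap H_{\cc}^1[0,L)$, so the inclusion extends uniquely to a bounded operator on all of $\Hasto$ by density of this core. The main technical point I anticipate is carefully verifying the reproducing kernel identity in the two regimes $L < \infty$ and $L = \infty$, with the latter requiring consistent use of the convention $x/L = 0$ throughout the decomposition of $\delta_x'$; once this is settled, the Riesz representation strategy sidesteps the delicate measure-theoretic interchanges that a direct vector-valued integration of $x \mapsto g(x)\delta_x$ would otherwise demand, and the remainder of the argument is essentially bookkeeping.
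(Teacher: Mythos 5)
Your argument is correct and follows essentially the same route as the paper: both rest on the identity $\|\rI_\chi h\|_{L^2([0,L);\chi)}^2=\chi(hh^\ast)=\spr{\KIO_\chi h}{h}_{\Hasto}$ on the core $\Hasto\cap H^1_{\cc}[0,L)$ coming from Corollary~\ref{corKIOform}, and both read off the formula for $\rI_\chi^\ast$ from the reproducing-kernel property $\spr{h}{\delta_x}_{\Hasto}=h(x)$, which the paper leaves implicit and you verify explicitly. The one genuine point of divergence is the implication ``$\rI_\chi$ bounded $\Rightarrow$ $\KIO_\chi$ bounded'': the paper gets it in one stroke from self-adjointness of $\KIO_\chi$ (Corollary~\ref{corKIOadj}), tacitly using that a self-adjoint operator whose quadratic form is bounded on a core is bounded, whereas you build the everywhere-defined bounded operator $\rI_\chi^\ast\rI_\chi$ via Riesz representation, match it with $\KIO_\chi$ on the core through~\eqref{eqnKIOchiIO}, and conclude by closedness of $\KIO_\chi$ together with the core property from Proposition~\ref{propKIO}. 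Your route is slightly longer but avoids the numerical-range fact hidden in the paper's one-liner and delivers the factorization $\KIO_\chi=\rI_\chi^\ast\rI_\chi$ and the adjoint formula in the same breath, rather than as separate consequences; the paper's route is shorter because positivity of the form plus self-adjointness does all the work at once. Both proofs are equally terse about the final density step (that the bounded extension of $\rI_\chi$ from the core really is the inclusion map, which follows from the pointwise convergence guaranteed by~\eqref{eqnFuncHastGrow}), so I do not count that against you.
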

  
  \begin{proof}
   For every $h\in\Hasto\cap H_{\cc}^1[0,L)$, we get from Corollary~\ref{corKIOform} that 
   \begin{align*}
     \spr{\rI_\chi h}{\rI_\chi h}_{L^2([0,L);\chi)} = \int_{[0,L)} |h|^2 d\chi = \chi(hh^\ast) = \spr{\KIO_\chi h}{h}_{\Hasto}.
   \end{align*}
   Since $\KIO_\chi$ is self-adjoint, it follows that the operator $\KIO_\chi$ is bounded if and only if the inclusion $\rI_\chi$ is bounded.
   In this case, the above equality also shows that $\KIO_\chi=\rI_\chi^\ast\rI_\chi$ and hence it remains to note that the adjoint of $\rI_\chi$ is given by
   \begin{align*}
    \rI_\chi^\ast g(x) = \spr{\rI_\chi^\ast g}{\delta_x}_{\Hasto} = \spr{g}{\rI_\chi \delta_x}_{L^2([0,L);\chi)} = \int_{[0,L)} \delta_x g\,d\chi, \quad x\in [0,L),
  \end{align*}
  for all functions $g\in L^2([0,L);\chi)$.     
   \end{proof}

The operator $\KIO_\chi$ turns out to be unitarily equivalent to a particular integral operator in $L^2[0,L)$, which allows us to readily translate the boundedness and compactness criteria collected in Appendix~\ref{app:IntOp}. 
For simplicity, we will state and prove the cases of an unbounded interval and a bounded interval separately. 

\begin{theorem}\label{thmKchiBC}
Suppose that $L$ is not finite. 
The following assertions hold true:
\begin{enumerate}[label=(\roman*), ref=(\roman*), leftmargin=*, widest=iii]
\item The operator $\KIO_\chi$ is bounded if and only if  there is a constant $c\in\C$ such that 
\begin{align}
  \limsup_{x\rightarrow\infty}\,  x\int_x^\infty |\Qr(t) - c|^2dt < \infty.
\end{align}
In this case, the constant $c$ is given by 
\begin{align}\label{eqnCqr}
c = \lim_{x\rightarrow\infty} \frac{1}{x}\int_0^{x} \Qr(t)dt.
\end{align} 
\item The operator $\KIO_\chi$ is compact if and only if there is a constant $c\in\C$ such that 
\begin{align}
  \lim_{x\rightarrow\infty}  x\int_x^\infty |\Qr(t) - c|^2dt = 0.
\end{align}
\item For each $p>1$, the operator $\KIO_\chi$ belongs to the Schatten--von Neumann class $\gS_p$ if and only if there is a constant $c\in\C$ such that 
\begin{align}
  \int_0^\infty \biggl(x\int_x^\infty |\Qr(t) - c|^2dt\biggr)^{\nicefrac{p}{2}} \frac{dx}{x} < \infty.
\end{align}
\item  If the operator $\KIO_\chi$ belongs to the Hilbert--Schmidt class $\gS_2$, then its Hilbert--Schmidt norm is given by
 \begin{align}
   \|\KIO_\chi\|_{\gS_2}^2 = 2 \int_0^\infty x|\Qr(x)-c|^2 dx,
 \end{align}
 where the constant $c$ is given by~\eqref{eqnCqr}.
 \item     If the operator $\KIO_\chi$ belongs to the trace class $\gS_1$, then 
    \begin{align}
      \int_0^\infty \biggl(x\int_x^\infty |\Qr(t) - c|^2dt\biggr)^{\nicefrac{1}{2}} \frac{dx}{x} < \infty,
    \end{align}
     the function $\Qr-c$ is integrable and the trace of $\KIO_\chi$ is given by 
    \begin{align}
      \tr\,\KIO_\chi = \int_0^\infty (c-\Qr(x))dx, 
    \end{align}
    where the constant $c$ is given by~\eqref{eqnCqr}.
\end{enumerate}
\end{theorem}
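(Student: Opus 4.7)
My overall plan is to show that $\KIO_\chi$ is unitarily equivalent to $-\JIO$, with $\JIO$ the integral operator from~\eqref{eqnJ} acting on $L^2[0,\infty)$, and then simply transfer each of the five assertions from the corresponding statement about $\JIO$ collected in Appendix~\ref{app:IntOp}. This is the reason the Appendix is isolated: every claim in the theorem should turn into a direct translation.

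\textbf{Setting up the unitary equivalence.} By the very definition of the inner product~\eqref{eq:normti}, the map $U\colon \Hastoinf \to L^2[0,\infty)$ defined by $Uf = f'$ is unitary. For a test function $h$ in the core $\Hastoinf \cap H^1_{\cc}[0,\infty)$, I would read off the derivative formula~\eqref{eqnKIOchider} in the case $L=\infty$, where the $\frac{1}{L}$-term drops out by the convention fixed after~\eqref{eqnFuncHastGrow}, to obtain
\begin{align*}
  (\KIO_\chi h)'(x) = -\Qr(x) h(x) - \int_x^\infty \Qr(t) h'(t)\,dt.
\end{align*}
Since $h(x) = \int_0^x h'(t)\,dt$, rewriting the first summand as $-\int_0^x \Qr(x) h'(t)\,dt$ merges both terms into
\begin{align*}
  (\KIO_\chi h)'(x) = -\int_0^\infty \Qr(\max(x,t))\, h'(t)\,dt = -(\JIO h')(x).
\end{align*}
Thus $U\KIO_\chi h = -\JIO (Uh)$ on the core, and since Proposition~\ref{propKIO} identifies $\Hastoinf \cap H^1_{\cc}[0,\infty)$ as a core of $\KIO_\chi$, passing to closures gives $U\KIO_\chi U^{-1} = -\JIO$.

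\textbf{Transfer of each item.} Once the equivalence is established, every property in the theorem is a unitary invariant (and each quantitative formula is either sign-invariant or changes sign in a transparent way). Items (i) and (ii) then follow from the Stuart-type boundedness and compactness criteria for $\JIO$ recorded in Appendix~\ref{app:IntOp}, with the constant $c$ identified through~\eqref{eqnCqr} as in that reference. Item (iii) is the Aleksandrov--Janson--Peller--Rochberg Schatten class criterion from the Appendix. For (iv), the identity $\|-\JIO\|_{\gS_2} = \|\JIO\|_{\gS_2}$ combined with the explicit Hilbert--Schmidt norm formula for $\JIO$ in the Appendix yields the stated expression. For (v), $\tr \KIO_\chi = -\tr \JIO$; the Appendix gives the integrability of $\Qr-c$ together with $\tr \JIO = \int_0^\infty (\Qr(x) - c)\,dx$, which matches the theorem after the change of sign.

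\textbf{Main obstacle.} The one non-mechanical point is propagating the identity $U\KIO_\chi U^{-1} = -\JIO$ from the core to all of $\Hastoinf$ (respectively $L^2[0,\infty)$). This is taken care of by the core statement of Proposition~\ref{propKIO}: $U$ maps $\Hastoinf \cap H^1_{\cc}[0,\infty)$ onto $L^2_{\cc}[0,\infty)$, which is a standard dense/core subspace in the regimes considered in the Appendix, so any two closed extensions coincide. Beyond this, the computations in Step~1 are purely routine manipulations of integrals, and all the quantitative content of (iv) and (v) is taken verbatim from Appendix~\ref{app:IntOp}.
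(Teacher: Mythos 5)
Your proposal is correct and is essentially the paper's own proof: the paper conjugates $\KIO_\chi$ by the unitary $\mathrm{U}\colon f\mapsto f'$, uses~\eqref{eqnKIOchider} to identify $\mathrm{U}\KIO_\chi\mathrm{U}^{-1}=-\JIO$ on the image of the core, and then quotes Theorem~\ref{thmAJPR}. The only slip is that $\mathrm{U}$ maps $\Hastoinf\cap H^1_{\cc}[0,\infty)$ onto $\dot{L}^2_{\cc}[0,\infty)$ (compactly supported functions with zero mean), not onto all of $L^2_{\cc}[0,\infty)$ --- which is harmless, since this is precisely the dense domain on which $\JIO$ is defined in Appendix~\ref{app:IntOp}.
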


\begin{proof}
We first note that the map $\mathrm{U}:f\mapsto f'$ is unitary from $\Hastoinf$ to $L^2[0,\infty)$ with inverse simply given by
\begin{align*}
  \mathrm{U}^{-1} f(x) = \int_0^x f(t)dt,\quad x\in [0,\infty).
\end{align*}
If $f$ belongs to $\dot{L}^2_{\cc}[0,\infty)$, then $\mathrm{U}^{-1}f$ belongs to $\Hastoinf\cap H_{\cc}^1[0,\infty)$ and from the expression in~\eqref{eqnKIOchider} for the derivative of $\KIO_\chi h$ when $h\in\Hastoinf\cap H_{\cc}^1[0,\infty)$ we get 
\begin{align*}
   \mathrm{U}\KIO_\chi \mathrm{U}^{-1}f(x) = (\KIO_\chi \mathrm{U}^{-1}f)'(x)  = -\Qr(x) \int_0^x f(t) dt - \int_x^\infty \Qr(t)f(t)dt = -\JIO f(x)
\end{align*}
for almost every $x\in [0,\infty)$, where $\JIO$ is the integral operator in $L^2[0,\infty)$ defined in Appendix~\ref{app:IntOp}. 
 Now the claims follow from Theorem~\ref{thmAJPR}.
 \end{proof}

With the connection established in the proof of Theorem~\ref{thmKchiBC}, the criteria from Theorem~\ref{thmAJPR+} become readily available as well. 

\begin{theorem}\label{thmKchiBC+}
Suppose that $L$ is not finite and that $\chi$ is a non-negative Borel measure on $[0,\infty)$. 
The following assertions hold true:
\begin{enumerate}[label=(\roman*), ref=(\roman*), leftmargin=*, widest=iii]
\item\label{itmKchiBC+1} The operator $\KIO_\chi$ is bounded if and only if 
\begin{align}
\limsup_{x\rightarrow\infty}\, x \int_{[x,\infty)}d\chi <\infty.
\end{align}
\item The operator $\KIO_\chi$ is compact if and only if 
\begin{align}
\lim_{x\rightarrow \infty} x \int_{[x,\infty)}d\chi  =0.
\end{align}
\item For each $p>\nicefrac{1}{2}$, the operator $\KIO_\chi$ belongs to the Schatten--von Neumann class $\gS_p$ if and only if 
\begin{align}
\int_0^\infty  \biggl(x\int_{[x,\infty)}d\chi\biggr)^p\frac{dx}{x} <\infty. 
\end{align}
 \item If the operator $\KIO_\chi$ belongs to the trace class $\gS_1$, then its trace is given by
    \begin{align}
      \tr\,\KIO_\chi = \int_{[0,\infty)} x\, d\chi(x). 
    \end{align}
\item If the operator $\KIO_\chi$ belongs to the Schatten--von Neumann class $\gS_{\nicefrac{1}{2}}$, then the measure $\chi$ is singular with respect to the Lebesgue measure.
\end{enumerate}
\end{theorem}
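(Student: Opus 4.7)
The plan is to piggyback on the proof of Theorem~\ref{thmKchiBC} by reusing the unitary $\mathrm{U}\colon f\mapsto f'$ from $\Hastoinf$ to $L^2[0,\infty)$, which conjugates $\KIO_\chi$ into $-\JIO$, where $\JIO$ is the integral operator from Appendix~\ref{app:IntOp}. That conjugation identity used no sign condition on $\chi$, so it applies verbatim when $\chi$ is a non-negative Borel measure; the normalized anti-derivative is then just the left-continuous distribution function of $\chi$ by~\eqref{eqnAntiChi}. With the unitary equivalence in hand, items~(i)--(iii) and~(v) follow immediately from the corresponding parts of Theorem~\ref{thmAJPR+} in the appendix, which state the boundedness, compactness, $\gS_p$ and singularity criteria for $\JIO$ directly in terms of $\chi$; unitary equivalence preserves all Schatten classes, so no further work is required.

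For the trace formula in item~(iv) I would route the argument through Theorem~\ref{thmKchiBC}(v) rather than quote the appendix once more. If $\KIO_\chi \in \gS_1$, then in particular it is compact, so item~(ii) forces $x\int_{[x,\infty)}d\chi\to 0$ as $x\to\infty$. This implies that $\chi$ is a finite measure, that $M:=\chi([0,\infty))$ is finite, and that the distribution function $\Qr(x)=\int_{[0,x)}d\chi$ tends to $M$. Hence the Ces\`aro limit~\eqref{eqnCqr} yields $c=M$, and Fubini's theorem gives
\begin{equation*}
\tr\,\KIO_\chi=\int_0^\infty (c-\Qr(x))\,dx=\int_0^\infty\!\int_{[x,\infty)}d\chi(t)\,dx=\int_{[0,\infty)} t\,d\chi(t),
\end{equation*}
which is the asserted formula.

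The obstacle here is not really analytic but organizational: one must verify that the measure-side criteria in Theorem~\ref{thmAJPR+} really match the expressions $x\int_{[x,\infty)}d\chi$ appearing on the present side. For a non-negative measure of finite total mass one has $|\Qr(t)-c|=\int_{[t,\infty)}d\chi$, so the quantities $\int_x^\infty|\Qr-c|^2dt$ in Theorem~\ref{thmKchiBC} involve $\chi$ twice, while Theorem~\ref{thmKchiBC+} uses $\int_{[x,\infty)}d\chi$ with $\chi$ only once; bridging this shift is precisely what the appendix theorem provides. Beyond that, the only genuine calculation is the Fubini interchange displayed above.
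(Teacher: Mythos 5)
Your proposal is correct and matches the paper's own (one-line) proof: the paper likewise invokes the unitary conjugation $\mathrm{U}\KIO_\chi\mathrm{U}^{-1}=-\JIO$ from the proof of Theorem~\ref{thmKchiBC} and then quotes Theorem~\ref{thmAJPR+}. Your alternative derivation of the trace formula in item~(iv) via Theorem~\ref{thmKchiBC}~(v), the identification $c=\chi([0,\infty))$ and Fubini is sound, and is in substance the same computation by which the appendix obtains Theorem~\ref{thmAJPR+}~\ref{itAJPR+iva} from Theorem~\ref{thmAJPR}~\ref{itmAJPRv}.
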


In a similar way, it is also possible to obtain boundedness and compactness criteria for the operator $\KIO_\chi$ when the interval $[0,L)$ is bounded. 

\begin{theorem}\label{thmKchicompactfin}
Suppose that $L$ is finite. 
The following assertions hold true:
\begin{enumerate}[label=(\roman*), ref=(\roman*), leftmargin=*, widest=iii]
\item The operator $\KIO_\chi$ is bounded if and only if 
\begin{align}
  \limsup_{x\rightarrow L}\, (L-x)\int_0^x|\Qr(t)|^2dt  < \infty.
\end{align}
\item The operator $\KIO_\chi$ is compact if and only if 
\begin{align}
  \lim_{x\rightarrow L}  (L-x)\int_0^x|\Qr(t)|^2dt = 0.
\end{align}
\item For each $p>1$, the operator $\KIO_\chi$ belongs to the Schatten--von Neumann class $\gS_p$ if and only if 
\begin{align}
  \int_0^L \biggl((L-x)\int_0^x|\Qr(t)|^2dt\biggr)^{\nicefrac{p}{2}} \frac{dx}{L-x} < \infty.
\end{align}
\item  If the operator $\KIO_\chi$ belongs to the Hilbert--Schmidt class $\gS_2$, then its Hilbert--Schmidt norm is bounded by 
 \begin{align}
   \|\KIO_\chi\|_{\gS_2}^2 \leq 2\int_0^L (L-x)|\Qr(x)|^2 dx.
 \end{align}
\item\label{itmTFfinL} If the operator $\KIO_\chi$ belongs to the trace class $\gS_1$, then   
\begin{align}
   \int_0^L \biggl((L-x)\int_0^x |\Qr(t)|^2dt\biggr)^{\nicefrac{1}{2}}\frac{dx}{L-x} < \infty,
\end{align}
 the function $\Qr$ is integrable and the trace of $\KIO_\chi$ is given by  
\begin{align}\label{eqnTRchi}
\tr\,\KIO_\chi =  \int_0^L \biggl(\frac{2x}{L} - 1\biggr)\Qr(x)dx.
\end{align}
\end{enumerate}
\end{theorem}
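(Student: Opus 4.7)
The plan is to adapt the proof of Theorem~\ref{thmKchiBC} to finite $L$. The crucial new feature is that $\mathrm{U}\colon f\mapsto f'$ realizes $\Hasto$ as the codimension-one subspace $\dot{L}^2[0,L)$ of zero-mean functions in $L^2[0,L)$, so the conjugated operator $\mathrm{U}\KIO_\chi\mathrm{U}^{-1}$ carries an extra orthogonal projection. Using~\eqref{eqnKIOchider} with $h=\int_0^\cdot g$ for $g\in\dot{L}^2_{\cc}[0,L)$, a direct computation identifies
\begin{align*}
\mathrm{U}\KIO_\chi\mathrm{U}^{-1}g(x) = -\int_0^L \Qr(\max(x,t))\,g(t)\,dt + \frac{1}{L}\int_0^L\!\int_0^L \Qr(\max(y,t))\,g(t)\,dt\,dy,
\end{align*}
which equals $-P\JIO_L|_{\dot{L}^2[0,L)}$, where $\JIO_L$ is the integral operator with kernel $\Qr(\max(x,t))$ on $L^2[0,L)$ and $P$ is the orthogonal projection onto $\dot{L}^2[0,L)$. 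Using the identity $\int_0^x g = -\int_x^L g$ valid for $g\in\dot{L}^2$, the restriction can also be rewritten as the commutator-type kernel $\JIO_L g(x) = \int_x^L (\Qr(t)-\Qr(x))\,g(t)\,dt$.

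For items~(i)--(iii), apply Muckenhoupt-type Hardy weights inequalities on $L^2[0,L)$ (or equivalently, reduce to Theorem~\ref{thmAJPR} via a substitution $[0,L)\to[0,\infty)$) to the commutator-type operator above. The mean-zero cancellation is what replaces the naive condition $x\int_x^L|\Qr|^2\,dt$ by the weaker Muckenhoupt quantity $(L-x)\int_0^x|\Qr|^2\,dt$, yielding the boundedness, compactness, and Schatten-class criteria as stated.

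For item~(iv), use $\|P\|\le 1$ to bound $\|\mathrm{U}\KIO_\chi\mathrm{U}^{-1}\|_{\gS_2}^2 \le \|\JIO_L|_{\dot{L}^2}\|_{\gS_2}^2$, then estimate the kernel-based Hilbert--Schmidt norm via Fubini (after a reflection/rearrangement to put the weight in the desired form) to obtain $2\int_0^L(L-x)|\Qr(x)|^2\,dx$; the inequality (rather than equality, unlike Theorem~\ref{thmKchiBC}\,(iv)) reflects the presence of $P$. For item~(v), expand in the Dirichlet--Laplacian eigenbasis $\phi_n(x)=\frac{\sqrt{2L}}{n\pi}\sin(\frac{n\pi x}{L})$, which is orthonormal in $\Hasto$. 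By Corollary~\ref{corKIOform} together with the identity $|\phi_n(x)|^2=\frac{L}{n^2\pi^2}(1-\cos(\frac{2n\pi x}{L}))$,
\begin{align*}
\spr{\KIO_\chi\phi_n}{\phi_n}_{\Hasto} = \chi(|\phi_n|^2) = -\frac{2}{n\pi}\int_0^L \Qr(x)\sin\!\left(\frac{2n\pi x}{L}\right) dx.
\end{align*}
Summing over $n$, interchanging sum and integral (justified by the trace-class hypothesis), and applying the Fourier series $\sum_{n\ge 1}\frac{\sin(n\theta)}{n}=\frac{\pi-\theta}{2}$ on $(0,2\pi)$ with $\theta=2\pi x/L$ then delivers~\eqref{eqnTRchi}.

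The main obstacle is item~(i): one must exploit the projection~$P$ (equivalently, the mean-zero constraint) to replace the condition $x\int_x^L|\Qr|^2\,dt$ (which arises naturally from bounding $\JIO_L$ on all of $L^2$) by the weaker $(L-x)\int_0^x|\Qr|^2\,dt$. For example $\Qr(x)=(L-x)^{-1/2}$ satisfies the latter but badly fails the former, yet $\KIO_\chi$ remains bounded; tracking this cancellation carefully is the main technical step.
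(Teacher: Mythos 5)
Your unitary conjugation and the identification of $\mathrm{U}\KIO_\chi\mathrm{U}^{-1}$ with a projected integral operator are correct and agree with the paper's~\eqref{eqnKIOJIO}, but the heart of items (i)--(iii) is missing, and the mechanism you propose for it is misidentified. You keep the kernel $\Qr(\max(x,t))$ and assert that the mean-zero constraint is what weakens the criterion from $x\int_x^L|\Qr|^2$ to $(L-x)\int_0^x|\Qr|^2$, to be established by ``Muckenhoupt-type Hardy inequalities'' and by ``tracking the cancellation carefully''. In the paper no such cancellation argument is needed: on $\dot{L}^2[0,L)$ the operator is rewritten with the kernel $\Qr(\min(x,t))$, that is, as $\pr\JIO_L\pr$ with $\JIO_L$ as in~\eqref{eq:a07}, and the quantity $(L-x)\int_0^x|\Qr(t)|^2dt$ is exactly the boundedness/compactness/Schatten criterion for the \emph{unprojected} $\JIO_L$ on all of $L^2[0,L)$ (Theorem~\ref{thmAJPR2}, obtained from the half-line max-kernel results by the reflection $x\mapsto L-x$). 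The projection is then shown to be harmless: boundedness of $\pr\JIO_L\pr$ forces boundedness of $\JIO_L$, which the paper proves by observing that the rank-one defect $f\mapsto\spr{\JIO_Lf}{1}_{L^2[0,L)}$ is a closable, hence bounded, functional (closability of Carleman integral operators) and that $\dot{L}^2_{\cc}[0,L)$ has codimension one in $L^2_{\cc}[0,L)$. This equivalence step is entirely absent from your proposal, and your alternative---characterizing the compressed max-kernel operator directly---is genuinely harder and is not carried out; in particular, Hardy/Muckenhoupt inequalities can at best give boundedness, not the Schatten--von Neumann membership criterion of item (iii), and the ``substitution $[0,L)\to[0,\infty)$'' you invoke is left unspecified and is not the reflection that actually does the job.

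A secondary issue concerns item (v): your eigenbasis computation is an attractive alternative to the paper's formula $\tr\KIO_\chi=\tr\JIO_L-\frac{1}{L}\spr{\JIO_L1}{1}_{L^2[0,L)}$, and the Fourier series does reproduce the weight $\frac{2x}{L}-1$, but it presupposes that $\Qr$ is integrable on $[0,L)$. This is needed both to make sense of $\chi(|\phi_n|^2)$ for the non-compactly-supported functions $\phi_n$ (Corollary~\ref{corKIOform} is only stated for functions in $\Hasto\cap H^1_{\cc}[0,L)$, so an approximation argument with some control of $\Qr$ near $L$ is required) and to dominate the uniformly bounded partial sums of $\sum_n n^{-1}\sin(2n\pi x/L)$ when interchanging sum and integral. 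Integrability of $\Qr$ is itself one of the assertions of item (v); the paper obtains it, together with the first display of (v), from the trace-class results of Aleksandrov--Janson--Peller--Rochberg via Theorem~\ref{thmAJPR2}~\ref{itmAJPR2v}, and without that input your argument for (v) is circular. Item (iv) is fine as sketched.
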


\begin{proof}
 We first note that the map $\mathrm{U}:f\mapsto f'$ is unitary from $\Hasto$ to $\dot{L}^2[0,L)$ with inverse simply given by
\begin{align*}
  \mathrm{U}^{-1} f(x) = - \int_x^L f(t)dt,\quad x\in [0,L).
\end{align*}
If $f$ belongs to $\dot{L}^2_{\cc}[0,L)$, then $\mathrm{U}^{-1}f$ belongs to $\Hasto\cap H_{\cc}^1[0,L)$ and from the expression in~\eqref{eqnKIOchider} for the derivative of $\KIO_\chi h$ when $h\in\Hasto\cap H_{\cc}^1[0,L)$ we get  
\begin{align*}
   \mathrm{U}\KIO_\chi \mathrm{U}^{-1}f(x)  &  =   \int_x^L(\Qr(x)-\Qr(t))f(t)dt  + \frac{1}{L}\int_0^L \Qr(t)\biggl(t f(t) - \int_t^L f(s)ds\biggr)dt
\end{align*}
for almost every $x\in [0,L)$. 
The first term on the right-hand side becomes  
\begin{align*}
\int_x^L (\Qr(x) - \Qr(t))f(t)dt & = \Qr(x) \int_x^L f(t) dt + \int_0^x \Qr(t)f(t)dt - \int_0^L \Qr(t)f(t)dt.
\end{align*}
Furthermore, if $\JIO_L$ is the integral operator defined in~\eqref{eq:a07} with $\Qr_L=\Qr$, then an integration by parts gives 
\begin{align*}
\spr{\JIO_L f}{1}_{L^2[0,L)} & =  \int_0^L \biggl( \int_0^t \Qr(s) f(s) ds + \Qr(t)\int_t^L f(s)ds\biggr) dt \\
& =  -\int_0^L \Qr(t) \biggl(t f(t) - \int_t^L f(s) ds\biggr) dt + L\int_0^L \Qr(t)f(t)dt,
\end{align*}
 so that in combination we conclude that 
\begin{align}\label{eqnKIOJIO}
  \mathrm{U}\KIO_\chi \mathrm{U}^{-1}f =  \JIO_L f - \frac{1}{L} \spr{\JIO_L f}{1}_{L^2[0,L)} = \pr\JIO_L\pr f,
 \end{align}
  where $\pr\colon L^2[0,L)\rightarrow \dot{L}^2[0,L)$ is the orthogonal projection onto $\dot{L}^2[0,L)$. 
  This implies that the operator $\KIO_\chi$ is bounded if and only if the operator $\JIO_L$ is bounded. 
  In fact, boundedness of $\JIO_L$ clearly entails boundedness of $\KIO_\chi$. 
  On the other side, if the operator $\KIO_\chi$ is bounded, then we may conclude from~\eqref{eqnKIOJIO} that the linear functional $f\mapsto\spr{\JIO_L f}{1}_{L^2[0,L)}$ defined on the domain $\dot{L}^2_{\cc}[0,L)$ is closable because the integral operator $\JIO_L$ is closable (see \cite[Theorem~3.8]{hasu78} for example).
  Since this implies that the linear functional is bounded, we infer from~\eqref{eqnKIOJIO} that the operator $\JIO_L$ is bounded on $\dot{L}^2_{\cc}[0,L)$. 
  Finally, as the subspace $\dot{L}^2_{\cc}[0,L)$ has codimension one in the domain $L^2_{\cc}[0,L)$ of $\JIO_L$, we may conclude that the operator $\JIO_L$ is bounded. 
  Now the claims follow from Theorem~\ref{thmAJPR2}.
 In particular, one has  
\begin{align*}
\tr\,\KIO_\chi = \tr\,\JIO_L - \frac{1}{L}\spr{\JIO_L 1}{1}_{L^2[0,L)} = \int_0^L \Qr(x)dx - \frac{2}{L}\int_0^L (L-x)\Qr(x) dx,
\end{align*}   
 which gives the required trace formula in~\eqref{eqnTRchi}. 
\end{proof}

The connection established in the proof of Theorem~\ref{thmKchicompactfin} again also makes the boundedness and compactness criteria from Theorem~\ref{thmAJPR2+} available.

\begin{theorem}\label{thmKchiBC+fin1}
Suppose that $L$ is finite and that $\chi$ is a non-negative Borel measure on $[0,L)$. 
The following assertions hold true:
\begin{enumerate}[label=(\roman*), ref=(\roman*), leftmargin=*, widest=iii]
\item\label{itmKchiBC+fin1} The operator $\KIO_\chi$ is bounded if and only if 
\begin{align}
\limsup_{x\rightarrow L}\, (L-x)\int_{[0,x)} d\chi <\infty.
\end{align}
\item The operator $\KIO_\chi$ is compact if and only if 
\begin{align}
\lim_{x\rightarrow L} (L-x)\int_{[0,x)} d\chi  =0.
\end{align}
\item For each $p>\nicefrac{1}{2}$, the operator $\KIO_\chi$ belongs to the Schatten--von Neumann class $\gS_p$ if and only if 
\begin{align}
\int_0^L  \biggl((L-x)\int_{[0,x)} d\chi\biggr)^p\frac{dx}{L-x} <\infty. 
\end{align}
\item If the operator $\KIO_\chi$ belongs to the trace class $\gS_1$, then its trace is given by  
\begin{align}
\tr\,\KIO_\chi =  \int_{[0,L)} x\biggl(1-\frac{x}{L}\biggr)d\chi(x).
\end{align}
\item If the operator $\KIO_\chi$ belongs to the Schatten--von Neumann class $\gS_{\nicefrac{1}{2}}$, then the measure $\chi$ is singular with respect to the Lebesgue measure.
\end{enumerate}
\end{theorem}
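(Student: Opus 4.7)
The plan is to invoke the unitary equivalence
\[
\mathrm{U}\KIO_\chi\mathrm{U}^{-1} = \pr\JIO_L\pr
\]
already established in the proof of Theorem~\ref{thmKchicompactfin}, where $\mathrm{U}\colon f\mapsto f'$ maps $\Hasto$ unitarily onto $\dot{L}^2[0,L)$, $\pr$ denotes the orthogonal projection of $L^2[0,L)$ onto $\dot{L}^2[0,L)$, and $\JIO_L$ is the integral operator associated with the normalized anti-derivative $\Qr(x)=\chi([0,x))$ via~\eqref{eqnAntiChi}. Since $I-\pr$ is the rank-one orthogonal projection onto the constants,
\[
\JIO_L - \pr\JIO_L\pr = (I-\pr)\JIO_L\pr + \JIO_L(I-\pr)
\]
has rank at most two, so $\JIO_L$ and $\pr\JIO_L\pr$ lie in the same Schatten class $\gS_p$ for every $p>0$, and share boundedness and compactness. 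Together with the closability argument already carried out in the proof of Theorem~\ref{thmKchicompactfin} (which transfers boundedness of $\pr\JIO_L\pr$ back to $\JIO_L$ thanks to the fact that $\dot L^2_{\cc}[0,L)$ has codimension one in $L^2_{\cc}[0,L)$), this reduces parts~(i)--(iii) and~(v) directly to the corresponding non-negative-measure criteria for $\JIO_L$ supplied by Theorem~\ref{thmAJPR2+} in the appendix.

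For the trace formula in~(iv), I would start from Theorem~\ref{thmKchicompactfin}\ref{itmTFfinL}. Substituting $\Qr(x)=\chi([0,x))$ and exchanging the order of integration via Fubini,
\[
\tr\,\KIO_\chi = \int_0^L \Bigl(\frac{2x}{L}-1\Bigr)\chi([0,x))\,dx = \int_{[0,L)}\biggl(\int_y^L \Bigl(\frac{2x}{L}-1\Bigr)dx\biggr)d\chi(y) = \int_{[0,L)} y\Bigl(1-\frac{y}{L}\Bigr)d\chi(y),
\]
since the inner integral evaluates to $\bigl.(x^2/L - x)\bigr|_y^L = y(1-y/L)$. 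Non-negativity of $\chi$ ensures the double integral is absolutely convergent once we know $\KIO_\chi\in\gS_1$, so the application of Fubini is justified.

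The only genuinely delicate point is the rank-two equivalence between $\JIO_L$ and $\pr\JIO_L\pr$ for the small-$p$ Schatten classes appearing in~(iii) and in~(v); here one simply relies on the stability of $\gS_p$ under finite-rank perturbations (valid for all $p>0$ as a quasi-norm statement), after which Theorem~\ref{thmAJPR2+} yields the conclusions verbatim, including that $\chi$ must be singular with respect to Lebesgue measure when $\KIO_\chi\in\gS_{\nicefrac{1}{2}}$.
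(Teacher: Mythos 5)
Your proposal is correct and follows essentially the same route as the paper, which disposes of this theorem in a single sentence by invoking the identity $\mathrm{U}\KIO_\chi\mathrm{U}^{-1}=\pr\JIO_L\pr$ from the proof of Theorem~\ref{thmKchicompactfin} together with Theorem~\ref{thmAJPR2+}; your rank-two perturbation argument, the appeal to the closability step for the boundedness transfer, and the Fubini computation turning \eqref{eqnTRchi} into $\int_{[0,L)}x(1-x/L)\,d\chi(x)$ are exactly the details the paper leaves implicit.
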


% % % % % % % % % % % % % % % % % % % % % % %
% % % % % % % % % % % % % % % % % % % % % % %
\section{Quadratic operator pencils}\label{secPencil}
% % % % % % % % % % % % % % % % % % % % % % %
% % % % % % % % % % % % % % % % % % % % % % %
 
 We are now going to establish a connection between the integral operators from the previous section and a generalized indefinite string $(L,\omega,\dip)$. 
 To this end, we first introduce an associated quadratic operator pencil $\Pe$ in the Hilbert space $\Hasto$ as follows: 
  For every $z\in\C$, the linear relation $\Pe(z)$ in $\Hasto$ is defined by requiring that a pair $(f,g)\in\Hasto\times\Hasto$ belongs to $\Pe(z)$ if and only if  
  \begin{align}\label{eqnDEPeS}
   -f'' = z\,\omega f + z^2 \dip f - g''.
  \end{align}
  In order to be precise, we interpret $-g''$ here as the $H_{\loc}^{-1}[0,L)$ distribution 
  \begin{align}
   h \mapsto \int_0^L g'(x)h'(x)dx.
  \end{align} 

 \begin{proposition}\label{proppencil}
  For each $z\in\C$, the linear relation $\Pe(z)$ is (the graph of) a densely defined closed linear operator with core $\Hasto \cap H^1_{\cc}[0,L)$ and
   \begin{align}\label{eqnPeKIO}
  \Pe(z)  = \rI - \KIO_{z\omega+z^2\dip}. 
  \end{align}  
 \end{proposition}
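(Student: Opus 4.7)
My plan is to reduce the statement to Proposition~\ref{propKIO} by rewriting the defining distributional identity for $\Pe(z)$ as a defining identity for $\KIO_{z\omega+z^2\dip}$.

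First, I would fix $z\in\C$ and set $\chi = z\omega + z^2\dip$, noting that $\chi$ belongs to $H^{-1}_{\loc}[0,L)$ since $\omega$ is assumed to be in this space and since every non-negative Borel measure on $[0,L)$ (in particular $\dip$) defines an element of $H^{-1}_{\loc}[0,L)$ via the identification recalled in the Notation subsection. Hence the operator $\KIO_\chi$ is available, and by Proposition~\ref{propKIO} it is densely defined, closed, and has $\Hasto\cap H^1_{\cc}[0,L)$ as a core.

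Next, I would rearrange the defining relation~\eqref{eqnDEPeS}. Reading it in the distributional sense and grouping the $f''$ and $g''$ terms on one side yields
\begin{align}
-(f-g)'' = \chi f
\end{align}
in $H^{-1}_{\loc}[0,L)$. By the very definition of $\KIO_\chi$ in~\eqref{eqnKchi}, this is precisely the statement that the pair $(f,f-g)$ belongs to $\KIO_\chi$. Conversely, if $(f,h)\in\KIO_\chi$ and we set $g=f-h$, then $g\in\Hasto$ and the pair $(f,g)$ satisfies~\eqref{eqnDEPeS}. Thus the map $(f,g)\mapsto(f,f-g)$ is a bijection between $\Pe(z)$ and $\KIO_\chi$, which translates into the identity
\begin{align}
\Pe(z) = \rI - \KIO_\chi
\end{align}
of linear relations in $\Hasto$.

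Finally, since $\rI$ is a bounded everywhere defined operator on $\Hasto$, adding it to $\KIO_\chi$ does not alter the domain, the closedness, or any core of $\KIO_\chi$. The remaining assertions of the proposition then transfer directly from Proposition~\ref{propKIO}. I do not anticipate a serious obstacle here; the one thing that requires a little care is verifying that the $H^{-1}_{\loc}[0,L)$ distribution $-g''$ appearing in~\eqref{eqnDEPeS}, defined by pairing with $g'$ in $L^2[0,L)$, really is the same object that one obtains by subtracting $g''$ from the distribution $-f''$ in the sense used in the definition of $\KIO_\chi$, but this is immediate from the fact that both distributions act on test functions $h\in H^1_{\cc}[0,L)$ via $\int_0^L g'h'\,dx$.
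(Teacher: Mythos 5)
Your proposal is correct and follows essentially the same route as the paper: the paper's proof likewise observes that $(f,g)\in\Pe(z)$ if and only if $(f,f-g)\in\KIO_{z\omega+z^2\dip}$, which gives~\eqref{eqnPeKIO}, and then invokes Proposition~\ref{propKIO} for density, closedness and the core. The only difference is that you spell out the distributional bookkeeping that the paper leaves implicit.
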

 
 \begin{proof}
 By comparing the definition of the linear relation $\Pe(z)$ with the definition of the operator $\KIO_{z\omega+z^2\dip}$, we see that some pair $(f,g)\in\Hasto\times\Hasto$ belongs to $\Pe(z)$ if and only if $(f,f-g)$ belongs to $\KIO_{z\omega+z^2\dip}$, which shows~\eqref{eqnPeKIO}. 
 Now the claims follow readily from Proposition~\ref{propKIO}.
 \end{proof}
  
  We refer to $\Pe$ as a quadratic operator pencil because one can show that 
  \begin{align}
    \Pe(z) = \overline{\rI - z\KIO_\omega  - z^2 \KIO_\dip}, \quad z\in\C. 
  \end{align}
  From the relation~\eqref{eqnPeKIO} and Corollary~\ref{corKIOadj}, we are able to determine the adjoint.
  
  \begin{corollary}
    For each $z\in\C$, the adjoint of the operator $\Pe(z)$ is given by
    \begin{align}
     \Pe(z)^\ast = \Pe(z^\ast). 
    \end{align} 
    In particular, the operator $\Pe(z)$ is self-adjoint when $z$ is real. 
  \end{corollary}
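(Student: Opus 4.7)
The plan is to combine the identity $\Pe(z) = \rI - \KIO_{z\omega + z^2\dip}$ from Proposition~\ref{proppencil} with the adjoint formula $\KIO_\chi^\ast = \KIO_{\chi^\ast}$ from Corollary~\ref{corKIOadj}. Since $\rI$ is bounded (indeed, the identity), perturbing a closed operator by it is compatible with taking adjoints, so I expect
\begin{align}
  \Pe(z)^\ast = (\rI - \KIO_{z\omega + z^2\dip})^\ast = \rI - \KIO_{z\omega + z^2\dip}^\ast = \rI - \KIO_{(z\omega + z^2\dip)^\ast}.
\end{align}
The task then reduces to computing the complex conjugate of the distribution $z\omega + z^2\dip$.

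Next, I would unwind the definition of the conjugate of an element of $H^{-1}_{\loc}[0,L)$ via its normalized anti-derivative: the normalized anti-derivative of $z\omega + z^2\dip$ is $z\Qr_\omega + z^2\Qr_\dip$, where $\Qr_\omega$ is the (real-valued) normalized anti-derivative of $\omega$ and $\Qr_\dip$ is the left-continuous distribution function of $\dip$, which is also real by~\eqref{eqnAntiChi}. Taking the complex conjugate pointwise gives $z^\ast \Qr_\omega + (z^\ast)^2 \Qr_\dip$, and this is the normalized anti-derivative of $z^\ast\omega + (z^\ast)^2\dip$. Hence $(z\omega+z^2\dip)^\ast = z^\ast\omega + (z^\ast)^2\dip$, so
\begin{align}
  \Pe(z)^\ast = \rI - \KIO_{z^\ast\omega + (z^\ast)^2\dip} = \Pe(z^\ast),
\end{align}
and the self-adjointness assertion when $z\in\R$ is the special case $z=z^\ast$.

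The only subtle point is the first displayed equality, namely that $(\rI - \KIO_{z\omega+z^2\dip})^\ast = \rI - \KIO_{z\omega+z^2\dip}^\ast$. Since $\rI$ is everywhere defined and bounded, this is a standard fact about adjoints of closed operators perturbed by a bounded everywhere-defined operator, and no further care is needed. Thus the main (and really only) step to verify carefully is that complex conjugation of the combination $z\omega + z^2\dip$ behaves as expected, which follows immediately from the reality of $\omega$ and the non-negativity of $\dip$.
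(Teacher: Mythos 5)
Your proposal is correct and follows exactly the route the paper intends: the paper derives this corollary directly from the identity $\Pe(z)=\rI-\KIO_{z\omega+z^2\dip}$ of Proposition~\ref{proppencil} together with Corollary~\ref{corKIOadj}, which is precisely your argument with the conjugation of $z\omega+z^2\dip$ spelled out. The two supporting observations you flag — that adding the bounded, everywhere defined identity commutes with taking adjoints of a densely defined closed operator, and that $(z\omega+z^2\dip)^\ast=z^\ast\omega+(z^\ast)^2\dip$ by reality of the coefficients — are both sound.
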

  
  Since the operator pencil $\Pe$ and the linear relation $\T$ both arise from the same differential equation, it is not surprising that they are closely related.   
   
    \begin{proposition}\label{propSinv}
   If $z$ belongs to the resolvent set of $\T$, then the operator $\Pe(z)$ has an everywhere defined bounded inverse and  
   \begin{align} \label{eqnSinvT}
     \Pe(z)^{-1} =\pr \bigl( z(\T-z)^{-1} + \rI\bigr)\pr^\ast, 
   \end{align}
  where $\pr$ denotes the projection from $\cH$ onto $\Hasto$.
  \end{proposition}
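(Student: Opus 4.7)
The strategy is to introduce $F := \pr\bigl(z(\T-z)^{-1} + \rI\bigr)\pr^\ast$, verify that $F$ is a bounded right inverse of $\Pe(z)$ by unwinding the definitions, and then leverage the adjoint formula $\Pe(z)^\ast = \Pe(z^\ast)$ together with the self-adjointness of $\T$ to deduce injectivity of $\Pe(z)$. Boundedness of $F$ on $\Hasto$ is immediate from boundedness of $(\T-z)^{-1}$ on $\cH$ and of the projection $\pr$.

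For the identity $\Pe(z) F = \rI$, I would take $h \in \Hasto$ and set $\phi := (\T-z)^{-1}\pr^\ast h \in \cH$, so that $(\phi, z\phi + \pr^\ast h) \in \T$. Applying Definition~\ref{defLRT} to this pair gives the distributional identities $-\phi_1'' = \omega(z\phi_1 + h) + \dip(z\phi_2)$ and $\dip \phi_2 = \dip(z\phi_1 + h)$. Substituting the latter into the former to eliminate $\phi_2$ yields
\begin{align*}
-\phi_1'' = z\omega\phi_1 + \omega h + z^2\dip\phi_1 + z\dip h.
\end{align*}
Setting $\psi := Fh = z\phi_1 + h \in \Hasto$, multiplying this identity by $z$, and subtracting $h''$ produces $-\psi'' = z\omega\psi + z^2\dip\psi - h''$, which is precisely the condition $(\psi, h) \in \Pe(z)$. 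Hence $\Pe(z) F h = h$, so $F$ is a bounded right inverse of $\Pe(z)$ and, in particular, $\Pe(z)$ is surjective onto $\Hasto$.

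To finish, I would exploit self-adjointness. Because $\T$ is self-adjoint, $z^\ast \in \rho(\T)$ as well, so the previous step applied at $z^\ast$ shows that $\Pe(z^\ast)$ is also surjective. Combining this with the formula $\Pe(z)^\ast = \Pe(z^\ast)$ from the corollary preceding the statement and the standard Hilbert space identity $\ker \Pe(z^\ast)^\ast = (\ran \Pe(z^\ast))^\perp$, we obtain $\ker \Pe(z) = \{0\}$. Therefore $\Pe(z)$ is a bijection from $\dom \Pe(z)$ onto $\Hasto$, and the right inverse $F$ must in fact be the (everywhere defined bounded) two-sided inverse $\Pe(z)^{-1}$.

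The main difficulty lies precisely in closing the loop via injectivity. A direct attempt to verify $F\Pe(z) f = f$ for $f \in \dom \Pe(z)$ would require producing a preimage $\phi \in \cH$ of $\Pe(z) f$ under $\T - z$ whose second component equals $f$ in $L^2([0,L);\dip)$; but membership of $f$ in $L^2([0,L);\dip)$ is not transparent from the purely distributional definition of $\dom \Pe(z)$, and the adjoint route together with the self-adjointness of $\T$ sidesteps this issue cleanly.
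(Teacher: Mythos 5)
Your proposal is correct and follows essentially the same route as the paper: the unwinding of $\T$ via Definition~\ref{defLRT} to show that $\pr\bigl(z(\T-z)^{-1}+\rI\bigr)\pr^\ast$ is a bounded right inverse of $\Pe(z)$ is exactly the paper's first inclusion, and your closing step (surjectivity of $\Pe(z^\ast)$ plus $\ker{\Pe(z)}=\ran{\Pe(z^\ast)}^\perp$ via $\Pe(z^\ast)^\ast=\Pe(z)$) is just a reformulation of the paper's adjoint argument $\Pe(z)^{-1}=(\Pe(z^\ast)^{-1})^\ast\subseteq\pr\bigl(z(\T-z)^{-1}+\rI\bigr)\pr^\ast$. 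Your concluding remark about the obstruction to a direct verification of $F\Pe(z)f=f$ matches the paper's observation that the converse inclusion only holds on the set of $f\in L^2([0,L);\dip)$.
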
 
 
  \begin{proof}
  We are going to show first that for every $z\in\C$ one has   
   \begin{align}\label{eqnSinvTsub}
      \Pe(z)^{-1} \supseteq \{(g,f)\in\Pe(z)^{-1}\,|\, f\in L^2([0,L);\dip)\} \supseteq \pr \bigl( z(\T-z)^{-1} + \rI\bigr)\pr^\ast,
   \end{align}
   where the right-hand side should be understood as a product of linear relations. 
   In fact, if a pair $(g,f)\in\Hasto\times\Hasto$ belongs to $\pr \bigl( z(\T-z)^{-1} + \rI\bigr)\pr^\ast$, then there is an $h\in\cH$ with $Ph=f$ such that $(\pr^\ast g,h)$ belongs to $z(\T-z)^{-1} + \rI$. 
   This implies that $(h-\pr^\ast g,zh)$ belongs to $\T$ and hence 
  \begin{align}\label{eqnSinvDE}
 -(h_1-g)''  & = z\omega h_1 + z\dip h_2, & \dip h_2 & = z\dip h_1,
 \end{align}   
 by Definition~\ref{defLRT}, which shows that $(f,g)$ belongs to $\Pe(z)$. 
 If $z$ is not zero, then we conclude from the second equation in~\eqref{eqnSinvDE} that $f\in L^2([0,L);\dip)$. 
 Otherwise, when $z$ is zero, one notes that $\pr^\ast g = h$ belongs to the range of $\T$, which shows again that $f\in L^2([0,L);\dip)$ in view of Definition~\ref{defLRT}. 
 The converse inclusion
    \begin{align*}
      \{(g,f)\in\Pe(z)^{-1}\,|\, f\in L^2([0,L);\dip)\} \subseteq \pr \bigl( z(\T-z)^{-1} + \rI\bigr)\pr^\ast
   \end{align*}
 only holds for non-zero $z\in\C$ in general. 
 % Consider $\dip=0$ and $\T^{-1}$ unbounded for example. 
 In order to prove it, we suppose that a pair $(f,g)$ belongs to $\Pe(z)$ such that $f\in L^2([0,L);\dip)$. 
 The definition of $\Pe(z)$ then shows that~\eqref{eqnSinvDE} holds with $h\in\cH$ given by $h_1=f$ and $h_2=zf$. 
 It follows that $(h-\pr^\ast g,zh)$ belongs to $\T$ and hence $(z\pr^\ast g,zh)$ belongs to $z(\T-z)^{-1} + \rI$, which shows that $(zg,zf)$ belongs to $\pr \bigl( z(\T-z)^{-1} + \rI\bigr)\pr^\ast$ and because $z$ is not zero, so does the pair $(g,f)$. 
   
  Finally, if $z$ belongs to the resolvent set of $\T$, then so does $z^\ast$ and we get
  \begin{align*}
    \Pe(z)^{-1} = (\Pe(z^\ast)^\ast)^{-1}  = (\Pe(z^\ast)^{-1})^\ast \subseteq \pr \bigl( z(\T-z)^{-1} + \rI\bigr)\pr^\ast
  \end{align*}
  from~\eqref{eqnSinvTsub}, which yields~\eqref{eqnSinvT} as well as the remaining claims.
 \end{proof}
   
 \begin{remark}  
  We have seen in the proof of Proposition~\ref{propSinv} that 
     \begin{align}\label{eqnSinvTeq}
      \Pe(z)^{-1} \supseteq \{(g,f)\in\Pe(z)^{-1}\,|\, f\in L^2([0,L);\dip)\} = \pr \bigl( z(\T-z)^{-1} + \rI\bigr)\pr^\ast
   \end{align}
  holds as long as $z$ is not zero. 
  The inclusion is indeed strict in some cases. 
 For example, if the measure $\dip$ is such that $\Hasto$ is not contained in $L^2([0,L);\dip)$ and we take $\omega = \dip$, then $\Pe(-1)$ is simply the identity operator, whereas the middle part in~\eqref{eqnSinvTeq} becomes its restriction to functions which belong to $L^2([0,L);\dip)$.
\end{remark}
   
   This connection with the linear relation $\T$ allows us to find a description of the inverse of $\Pe(z)$ via the resolvent of $\T$ when $z$ belongs to the resolvent set of $\T$.

   \begin{corollary}
  If $z$ belongs to the resolvent set of $\T$, then one has   
  \begin{align}
    \Pe(z)^{-1}g(x) & = \spr{g}{G(x,\redot)^\ast}_{\Hasto}, \quad x\in[0,L), 
  \end{align}
   for every $g\in \Hasto$,  where the Green's function $G$ is given by  
 \begin{align}
  G(x,t) = \frac{1}{W(\psi,\phi)} \begin{cases} \psi(x) \phi(t), & t\in[0,x), \\ \psi(t) \phi(x), & t\in[x,L),    \end{cases}
 \end{align}
 and $\psi$, $\phi$ are linearly independent solutions of the homogeneous differential equation~\eqref{eqnDEho} such that $\phi$ vanishes at zero, $\psi$ lies in $\dot{H}^1[0,L)$ and $z\psi$ lies in $L^2([0,L);\dip)$. 
 \end{corollary}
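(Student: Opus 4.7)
The plan is to derive the formula by directly combining Proposition~\ref{propSinv}, which gives the representation $\Pe(z)^{-1} = \pr(z(\T-z)^{-1}+\rI)\pr^\ast$, with the explicit Green's-function description of the resolvent of $\T$ from Proposition~\ref{propRes}. No new analysis is required; the argument is essentially bookkeeping with the decomposition $\cH = \Hasto \times L^2([0,L);\dip)$.

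Given $g\in\Hasto$, I would first observe that $\pr^\ast g \in \cH$ is simply the pair $(g,0)$, where the second component is the zero element of $L^2([0,L);\dip)$. Applying the resolvent formula of Proposition~\ref{propRes} to this element at a point $x\in[0,L)$ yields
\begin{align*}
z(\T-z)^{-1}(g,0)(x) = \spr{(g,0)}{\G(x,\redot)^\ast}_{\cH}\begin{pmatrix}1\\z\end{pmatrix} - g(x)\begin{pmatrix}1\\0\end{pmatrix}.
\end{align*}
Adding $\rI(g,0)(x) = g(x)\bigl(\begin{smallmatrix}1\\0\end{smallmatrix}\bigr)$ cancels the subtracted term, so that
\begin{align*}
\bigl(z(\T-z)^{-1}+\rI\bigr)(g,0)(x) = \spr{(g,0)}{\G(x,\redot)^\ast}_{\cH}\begin{pmatrix}1\\z\end{pmatrix}.
\end{align*}
Applying $\pr$ to extract the $\Hasto$-component then gives $\Pe(z)^{-1}g(x) = \spr{(g,0)}{\G(x,\redot)^\ast}_{\cH}$.

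It remains to reduce the $\cH$-inner product to the claimed $\Hasto$-inner product. Since $\G(x,t) = \bigl(\begin{smallmatrix}1\\z\end{smallmatrix}\bigr)G(x,t)$ with $G$ as in the statement of the corollary, the first component of $\G(x,\redot)$ belongs to $\Hasto$ (it is a linear interpolation of $\psi\in\dot H^1[0,L)$ and $\phi$, which vanishes at zero) and coincides with $G(x,\redot)$, while the second component equals $zG(x,\redot)$. Since the second slot of $(g,0)$ is zero, only the $\Hasto$-part of the scalar product survives, and we conclude
\begin{align*}
\Pe(z)^{-1}g(x) = \spr{g}{G(x,\redot)^\ast}_{\Hasto},
\end{align*}
as desired. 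The only possible obstacle is a bookkeeping nuisance: one must confirm that $G(x,\redot)$ really lies in $\Hasto$ (so that the right-hand side makes sense pointwise in $x$), but this is already built into the hypotheses on $\psi$ and $\phi$ given in Proposition~\ref{propRes}.
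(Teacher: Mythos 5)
Your proposal is correct and follows exactly the route the paper takes: the paper's own proof simply states that the formula follows immediately from Proposition~\ref{propSinv} and Proposition~\ref{propRes}, and you have carried out precisely that bookkeeping (identifying $\pr^\ast g=(g,0)$, cancelling the subtracted term, and observing that only the $\Hasto$-component of the $\cH$-inner product survives).
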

  
   \begin{proof}
    The claimed representation for the inverse of $\Pe(z)$ when $z$ belongs to the resolvent set of $\T$ follows immediately from Proposition~\ref{propSinv} and Proposition~\ref{propRes}. 
 \end{proof}  
  
  Under some additional assumptions on the coefficients, we can say much more about the relation between the operator pencil $\Pe$ and the linear relation $\T$. 

 \begin{theorem}\label{thmPencilpm}
  Zero belongs to the resolvent set of $\T$ if and only if the operators $\KIO_\omega$ and $\KIO_\upsilon$ are bounded.
  In this case, the following assertions hold true:
\begin{enumerate}[label=(\roman*), ref=(\roman*), leftmargin=*, widest=iii]  
  \item\label{itmPencilpm2} For all $z\in\C$ one has 
 \begin{align}
   \Pe(z) = \rI - z\KIO_{\omega} -z^2\KIO_{\dip}.
 \end{align}
  \item\label{itmPencilpm3} For all $z\in\C$ one has 
  \begin{align}
      \Pe(z)^{-1} = \pr \bigl( z(\T-z)^{-1} + \rI\bigr)\pr^\ast,
  \end{align}
    where $\pr$ denotes the projection from $\cH$ onto $\Hasto$.
  \item\label{itmPencilpm4} For each $z\in\C$, the operator $\Pe(z)$ has an everywhere defined bounded inverse if and only if $z$ belongs to the resolvent set of $\T$.   
   In this case, one has 
  \begin{align}\label{eqnTres}
   (\T-z)^{-1} =  \begin{pmatrix} \rI & 0 \\ z\rI_\dip & \rI \end{pmatrix} \begin{pmatrix}  \Pe(z)^{-1} & 0 \\ 0 & \rI \end{pmatrix} \begin{pmatrix} \rI  & z\rI_\dip^\ast  \\ 0 & \rI \end{pmatrix} \begin{pmatrix} \KIO_\omega & \rI_\dip^\ast  \\ \rI_\dip & 0 \end{pmatrix},
  \end{align}
   where the inclusion $\rI_\dip:\Hasto\rightarrow L^2([0,L);\dip)$ is bounded. 
     \item\label{itmPencilpm5} The inverse of $\T$ is given by
  \begin{align}\label{eqnTinvBOM}
   \T^{-1} = \begin{pmatrix} \KIO_\omega & \rI_\dip^\ast  \\ \rI_\dip & 0 \end{pmatrix}. 
  \end{align}
 \end{enumerate}
   \end{theorem}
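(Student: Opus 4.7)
The strategy is to establish the main equivalence together with formula (iv) first and then derive parts (i)--(iii) under the boundedness assumption.

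For the ``if'' direction and formula (iv), assume $\KIO_\omega$ and $\KIO_\dip$ are bounded. By Corollary~\ref{corKIOincl} the inclusion $\rI_\dip:\Hasto\to L^2([0,L);\dip)$ is then bounded, so the block operator $A:\cH\to\cH$ defined by the right-hand side of~\eqref{eqnTinvBOM} makes sense and is bounded; it is moreover self-adjoint, since $\KIO_\omega$ is self-adjoint by Corollary~\ref{corKIOadj}. I would verify $A\subseteq\T^{-1}$ by checking that $(Ag,g)\in\T$ for every $g\in\cH$ in the sense of Definition~\ref{defLRT}. Writing $Ag=(\KIO_\omega g_1+\rI_\dip^\ast g_2,\,\rI_\dip g_1)=:(f_1,f_2)$, the identity $\dip f_2=\dip g_1$ is immediate since $f_2$ represents $g_1$ in $L^2(\dip)$. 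For $-f_1''=\omega g_1+\dip g_2$, one uses $-(\KIO_\omega g_1)''=\omega g_1$ from the definition of $\KIO_\omega$, together with $-(\rI_\dip^\ast g_2)''=\dip g_2$, which I would prove first for $g_2=h|_\dip$ with $h\in\Hasto\cap H^1_{\cc}[0,L)$ (where $\rI_\dip^\ast g_2=\KIO_\dip h$ by Corollary~\ref{corKIOincl}, so Proposition~\ref{propKIO} applies) and then extend by continuity. Since $A$ is bounded, everywhere-defined and self-adjoint, while $\T^{-1}$ is self-adjoint as the inverse of the self-adjoint relation $\T$, the inclusion $A\subseteq\T^{-1}$ forces $A=\T^{-1}$. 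This yields both (iv) and $0\in\rho(\T)$.

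For the ``only if'' direction, suppose $0\in\rho(\T)$, so that $\T^{-1}$ is bounded and everywhere defined. Applying $\T^{-1}$ to $(g_1,0)$ for arbitrary $g_1\in\Hasto$ produces $f\in\cH$ with $-f_1''=\omega g_1$, i.e., $(g_1,f_1)\in\KIO_\omega$ by the definition in Section~\ref{secIntoper}. Hence $\dom(\KIO_\omega)=\Hasto$, and Proposition~\ref{propKIO} together with the closed graph theorem gives boundedness of $\KIO_\omega$. Applying $\T^{-1}$ to $(0,g_2)$ instead yields a bounded map $B:L^2(\dip)\to\Hasto$; pairing $\langle Bg_2,h\rangle_{\Hasto}$ with test functions $h\in\Hasto\cap H^1_{\cc}[0,L)$ and using the distributional equation identifies $B^\ast=\rI_\dip$, so $\rI_\dip$ and hence $\KIO_\dip$ are bounded by Corollary~\ref{corKIOincl}.

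Assuming boundedness from here on, part (i) follows from Proposition~\ref{proppencil}: on the core $\Hasto\cap H^1_{\cc}[0,L)$ the formula $\KIO_\chi h(x)=\chi(\delta_x h)$ is linear in $\chi$, giving $\KIO_{z\omega+z^2\dip}=z\KIO_\omega+z^2\KIO_\dip$ on the core, and since the right-hand side is bounded and everywhere defined, the equality extends by density. Part (ii) is trivial at $z=0$; for $z\in\rho(\T)\setminus\{0\}$ it is the equality in~\eqref{eqnSinvTeq} of the Remark after Proposition~\ref{propSinv}, applicable because $\Hasto\subseteq L^2(\dip)$ makes every $f$ occurring in $\Pe(z)^{-1}$ automatically lie in $L^2(\dip)$. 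For part (iii), the implication $z\in\rho(\T)\Rightarrow\Pe(z)^{-1}$ bounded is Proposition~\ref{propSinv}; conversely, if $\Pe(z)^{-1}$ is bounded and everywhere defined, the right-hand side of~\eqref{eqnTres} defines a bounded, everywhere-defined operator $R$ on $\cH$, and a block-matrix calculation using (iv), the identity $\rI_\dip^\ast\rI_\dip=\KIO_\dip$, and $\Pe(z)\Pe(z)^{-1}=\rI$ verifies $(\T-z)R=\rI$, so $R=(\T-z)^{-1}$ and $z\in\rho(\T)$.

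The main obstacle is the distributional identity $-(\rI_\dip^\ast g_2)''=\dip g_2$ for general $g_2\in L^2(\dip)$, which is delicate because the convention underlying Definition~\ref{defLRT} must be correctly interpreted near the endpoint. The density argument through $g_2=h|_\dip$ with $h\in\Hasto\cap H^1_{\cc}[0,L)$ is the cleanest route, since on this subspace $\rI_\dip^\ast g_2$ coincides with $\KIO_\dip h$ and Proposition~\ref{propKIO} applies directly. Once this identity is secured, the rest of the argument is bounded-operator algebra combined with the self-adjoint-relation machinery.
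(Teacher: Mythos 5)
Your proposal follows essentially the same route as the paper's proof: identify $\T^{-1}$ with the block operator matrix in \eqref{eqnTinvBOM}, read off the boundedness equivalence, obtain~\ref{itmPencilpm2} from Proposition~\ref{proppencil} by linearity on the core plus density, obtain~\ref{itmPencilpm3} from Proposition~\ref{propSinv} and the equality \eqref{eqnSinvTeq} (which becomes unconditional once $\rI_\dip$ is bounded), and obtain~\ref{itmPencilpm4} from a Frobenius--Schur computation. The variations (proving $A\subseteq\T^{-1}$ and invoking maximality of self-adjoint relations instead of surjectivity plus the closed graph theorem; the closed-graph argument for boundedness of $\KIO_\omega$ in the converse direction) are all sound.

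The one step that does not work as written is the one you yourself single out as the main obstacle: the identity $-(\rI_\dip^\ast g_2)''=\dip g_2$ for general $g_2\in L^2([0,L);\dip)$. Your plan is to prove it for $g_2=h|_\dip$ with $h\in\Hasto\cap H^1_{\cc}[0,L)$ and extend by density, but the set of such restrictions need not be dense in $L^2([0,L);\dip)$: every $h$ in the core vanishes at $0$, so if $\dip(\{0\})>0$ (which is allowed for a generalized indefinite string, e.g.\ $\dip=\delta_0$) these restrictions lie in the proper closed subspace orthogonal to $\id_{\{0\}}$ and the continuity extension cannot reach all of $L^2([0,L);\dip)$. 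The detour is also unnecessary, because the identity follows for \emph{every} $g_2$ directly from the definition of the adjoint: for all $h$ in the core,
\begin{align*}
\int_0^L (\rI_\dip^\ast g_2)'(x)h'(x)\,dx = \spr{\rI_\dip^\ast g_2}{h^\ast}_{\Hasto} = \spr{g_2}{\rI_\dip h^\ast}_{L^2([0,L);\dip)} = \int_{[0,L)} g_2 h\, d\dip,
\end{align*}
which is exactly the statement $-(\rI_\dip^\ast g_2)''=\dip g_2$ in the sense required by Definition~\ref{defLRT}; this is the computation the paper uses. With that one step replaced, your argument goes through.
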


 \begin{proof}
  Suppose first that zero belongs to the resolvent set of $\T$ and let $f=\T^{-1}g$, where $g\in\cH$ with $g_2=0$. 
  Definition~\ref{defLRT} implies that $f_2=\rI_\dip g_1\in L^2([0,L);\dip)$ and a comparison with the definition of $\KIO_\omega$ reveals that $f_1=\KIO_\omega g_1$. 
  From boundedness of $\T^{-1}$ we then infer that 
  \begin{align*}
    \|\KIO_\omega g_1\|_{\Hasto}^2 + \|\rI_\dip g_1\|_{L^2([0,L);\dip)}^2  = \|f\|_\cH^2 \leq \|\T^{-1}\|^2 \|g\|_\cH^2 = \|\T^{-1}\|^2 \|g_1\|_{\Hasto}^2.
  \end{align*}
  This shows that the operator $\KIO_\omega$ and the inclusion $\rI_\dip$ are bounded, which also guarantees boundedness of the operator $\KIO_\dip$ in view of Corollary~\ref{corKIOincl}. 
  On the other side, for $g\in\cH$ with $g_1=0$, Definition~\ref{defLRT} implies that $f=\T^{-1}g$ certainly satisfies $f_2=0$. 
  Because $\T^{-1}$ is self-adjoint, we conclude that it is given by~\eqref{eqnTinvBOM}.   
  Item~\ref{itmPencilpm2} follows readily from~\eqref{eqnPeKIO} and~\eqref{eqnKIOchiIO}, whereas Item~\ref{itmPencilpm3} follows from~\eqref{eqnSinvTeq} when $z$ is not zero and from~\eqref{eqnSinvT} when $z$ is zero (since zero belongs to the resolvent set of $\T$).
    For non-zero $z\in\C$, the equivalence in Item~\ref{itmPencilpm4} follows from the Frobenius--Schur factorization
  \begin{align*}
   \rI - z \T^{-1}  = \begin{pmatrix} \rI & -z\rI_\dip^\ast  \\ 0 & \rI \end{pmatrix}  \begin{pmatrix} \Pe(z) & 0  \\ 0 & \rI \end{pmatrix}  \begin{pmatrix} \rI & 0  \\ -z\rI_\dip & \rI \end{pmatrix},
  \end{align*}
  which also yields the identity in~\eqref{eqnTres} because
  \begin{align*}
     (\T-z)^{-1} = \bigl(\rI-z\T^{-1}\bigr)^{-1} \T^{-1}.
  \end{align*}
  It only remains to note that Item~\ref{itmPencilpm4} also holds when $z$ is zero.  
   
  In order to complete the proof, we need to show that boundedness of the operators $\KIO_\omega$ and $\KIO_\dip$ implies that zero belongs to the resolvent set of $\T$. 
  Under these assumptions, the inclusion $\rI_\dip$ is bounded by Corollary~\ref{corKIOincl} so that we may set 
  \begin{align*}
     f = \begin{pmatrix} \KIO_\omega & \rI_\dip^\ast  \\ \rI_\dip & 0 \end{pmatrix} g  = \begin{pmatrix} \KIO_\omega g_1 + \rI_\dip^\ast g_2 \\ \rI_\dip g_1 \end{pmatrix}
  \end{align*}
  for a given $g\in\cH$.   
  From the definition of $\KIO_\omega$ and after computing that 
    \begin{align*}
    \int_0^L (\rI_\dip^\ast g_2)'(x)h'(x)dx = \spr{\rI_\dip^\ast g_2}{h^\ast}_{\Hasto} = \spr{g_2}{\rI_\dip h^\ast}_{L^2([0,L);\dip)} = \int_{[0,L)} g_2 h\, d\dip
  \end{align*}
  for all functions $h\in\Hasto\cap H_{\cc}^1[0,L)$, we infer that  
  \begin{align*}
    -f_1'' & = \omega g_1 + \dip g_2, & \dip f_2 & = \dip g_1,
  \end{align*}
  which means that $(f,g)$ belongs to $\T$. 
  Since $g$ was arbitrary, this implies that the domain of the self-adjoint operator $\T^{-1}$ is the whole space and thus $\T^{-1}$ is bounded by the closed graph theorem, which guarantees that zero belongs to the resolvent set of $\T$. 
   \end{proof}  

 The operator pencil $\Pe$ is essentially a Schur complement of the block operator matrix in~\eqref{eqnTinvBOM} and the factorization~\eqref{eqnTres} is essentially a Frobenius--Schur decomposition.

 \begin{corollary}\label{corPelin}
  If zero belongs to the resolvent set of $\T$, then the non-zero spectrum of $\T^{-1}$ coincides with the non-zero spectrum of the block operator matrix\footnote{Here and below, $\sqrt{\KIO_\dip}$ always denotes the positive square root of the positive operator $\KIO_\dip$.}
 \begin{align}\label{eqnPeLinear}
   \begin{pmatrix} \KIO_\omega & \sqrt{\KIO_\dip}  \\ \sqrt{\KIO_\dip} & 0 \end{pmatrix}
 \end{align}
and all non-zero eigenvalues of the block operator matrix in~\eqref{eqnPeLinear} are simple. 
 \end{corollary}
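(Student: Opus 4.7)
The plan is to bypass any attempt to make $A=\T^{-1}$ and the block operator matrix $B$ in~\eqref{eqnPeLinear} unitarily equivalent (which is obstructed by the fact that $\rI_\dip$ and $\sqrt{\KIO_\dip}$ are only linked via a partial isometry), and instead reduce the spectral comparison to invertibility of a common Schur complement. The key algebraic input is the identity $\rI_\dip^\ast\rI_\dip=\KIO_\dip=\sqrt{\KIO_\dip}\sqrt{\KIO_\dip}$ from Corollary~\ref{corKIOincl}, which makes the same Schur complement appear on both sides.

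Both $A$ and $B$ are bounded self-adjoint on $\cH$ and $\Hasto\times\Hasto$ respectively (since $\KIO_\omega,\KIO_\dip$ are bounded). For each non-zero $\lambda\in\C$ I would write down the Frobenius--Schur factorizations
\begin{align*}
A-\lambda &= \begin{pmatrix} \rI & -\lambda^{-1}\rI_\dip^\ast \\ 0 & \rI \end{pmatrix}\!\begin{pmatrix} \KIO_\omega-\lambda\rI+\lambda^{-1}\KIO_\dip & 0 \\ 0 & -\lambda\rI \end{pmatrix}\!\begin{pmatrix} \rI & 0 \\ -\lambda^{-1}\rI_\dip & \rI \end{pmatrix},\\
B-\lambda &= \begin{pmatrix} \rI & -\lambda^{-1}\sqrt{\KIO_\dip} \\ 0 & \rI \end{pmatrix}\!\begin{pmatrix} \KIO_\omega-\lambda\rI+\lambda^{-1}\KIO_\dip & 0 \\ 0 & -\lambda\rI \end{pmatrix}\!\begin{pmatrix} \rI & 0 \\ -\lambda^{-1}\sqrt{\KIO_\dip} & \rI \end{pmatrix}.
\end{align*}
The outer triangular factors are boundedly invertible in both cases, so invertibility of $A-\lambda$ and of $B-\lambda$ are each equivalent to invertibility of the common Schur complement. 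Rewriting that complement as $-\lambda\,\Pe(\lambda^{-1})$ and invoking Theorem~\ref{thmPencilpm}\ref{itmPencilpm4} converts both conditions into the single requirement that $\lambda^{-1}$ lie in the resolvent set of $\T$, i.e.\ that $\lambda$ lie in the resolvent set of $A$. This proves the asserted equality of non-zero spectra.

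For the simplicity claim I would read off the eigenspaces from the same factorizations: a pair $(f_1,f_2)$ lies in $\ker(A-\lambda)$ iff $f_1\in\ker\Pe(\lambda^{-1})$ and $f_2=\lambda^{-1}\rI_\dip f_1$, and analogously $(g_1,g_2)\in\ker(B-\lambda)$ iff $g_1\in\ker\Pe(\lambda^{-1})$ and $g_2=\lambda^{-1}\sqrt{\KIO_\dip}g_1$. Projection onto the first component therefore provides linear isomorphisms from both eigenspaces onto $\ker\Pe(\lambda^{-1})$. Since $\T$ is a canonical model for self-adjoint operators with simple spectrum (as recalled in the introduction, with detailed proof in~\cite{IndefiniteString}), the non-zero eigenvalues of $A=\T^{-1}$ are simple, forcing $\dim\ker\Pe(\lambda^{-1})=1$ and hence simplicity of every non-zero eigenvalue of $B$. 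The main conceptual point to get right is the bookkeeping with the two factor spaces ($L^2([0,L);\dip)$ for $A$, $\Hasto$ for $B$), which never interact in the argument because everything is routed through the common pencil $\Pe$ acting on $\Hasto$.
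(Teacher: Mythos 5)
Your proposal is correct, and for the equality of the non-zero spectra it is essentially the paper's argument: the paper likewise invokes the Frobenius--Schur factorization of $\rI-z(\ref{eqnPeLinear})$ with the pencil $\Pe(z)$ as the common Schur complement, and then appeals to Theorem~\ref{thmPencilpm}~\ref{itmPencilpm4} (whose proof already contains your factorization of $\rI-z\T^{-1}$, using exactly the identity $\rI_\dip^\ast\rI_\dip=\KIO_\dip=\sqrt{\KIO_\dip}\sqrt{\KIO_\dip}$ you isolate). Where you diverge is the simplicity claim. The paper argues directly that $\ker{\Pe(z)}$ is at most one-dimensional because its elements are solutions of the homogeneous equation~\eqref{eqnDEho} vanishing at zero, so uniqueness for the initial value problem settles the matter without any reference to $\T$. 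You instead show that projection onto the first component identifies both $\ker{A-\lambda}$ and $\ker{B-\lambda}$ with $\ker{\Pe(\lambda^{-1})}$ and then import the simplicity of the spectrum of $\T$ from~\cite{IndefiniteString}. That is logically sound (a self-adjoint relation with simple spectrum has one-dimensional eigenspaces, and there is no circularity since the cited simplicity is proved there by Weyl--Titchmarsh methods), but it trades a one-line ODE uniqueness observation for a substantially deeper external input; the paper's route is preferable for being self-contained, while yours has the mild advantage of making the eigenspace isomorphisms between $A$, $B$ and the pencil kernel explicit.
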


 \begin{proof}
   The claim about the spectra follows from the Frobenius--Schur factorization
   \begin{align*}
       \rI - z\begin{pmatrix} \KIO_\omega & \sqrt{\KIO_\dip}  \\ \sqrt{\KIO_\dip} & 0 \end{pmatrix} = \begin{pmatrix} \rI & -z\sqrt{\KIO_\dip}  \\ 0 & \rI \end{pmatrix}  \begin{pmatrix} \Pe(z) & 0  \\ 0 & \rI \end{pmatrix}  \begin{pmatrix} \rI & 0  \\ -z\sqrt{\KIO_\dip} & \rI \end{pmatrix}
   \end{align*}
   and Theorem~\ref{thmPencilpm}~\ref{itmPencilpm4}.   
   If $\lambda$ is a non-zero eigenvalue of the block operator matrix in~\eqref{eqnPeLinear} with eigenvector $f\in\Hasto\times\Hasto$, then one readily finds that 
   \begin{align*}
      \Pe(\lambda^{-1})f_1 & =0, & f_2 & = \lambda^{-1}\sqrt{\KIO_\dip}f_1.
   \end{align*}
  However, the kernel of $\Pe(z)$ is at most one-dimensional as it consists of solutions of the homogeneous differential equation~\eqref{eqnDEho} that vanish at zero. 
  We conclude that the eigenvector $f$ is unique up to scalar multiples. 
 \end{proof}

 In view of the following section, let us point out that zero always belongs to the resolvent set of $\T$ when the spectrum of $\T$ is purely discrete. 

\begin{remark}\label{remTcomp}
If the resolvent $(\T-z)^{-1}$ is compact for some (and hence for all) $z$ in the resolvent set of $\T$, then zero belongs to the resolvent set of $\T$. 
Indeed,  the spectrum of $\T$ consists only of isolated eigenvalues in this case and since the kernel of $\T$ is trivial, it follows that zero can not be in the spectrum. 
Of course, compactness of the resolvent can be replaced by the weaker condition that zero does not belong to the essential spectrum of $\T$. 
\end{remark}

% % % % % % % % % % % % % % % % % % % % % % %
% % % % % % % % % % % % % % % % % % % % % % %
\section{Purely discrete spectrum}\label{secDis}
% % % % % % % % % % % % % % % % % % % % % % %
% % % % % % % % % % % % % % % % % % % % % % %

The main results of this article are a number of criteria for the spectrum $\sigma$ of a generalized indefinite string $(L,\omega,\dip)$ to be discrete and to satisfy 
\begin{align}\label{eqnSinSp}
\sum_{\lambda\in\sigma} \frac{1}{|\lambda|^p} < \infty
\end{align}
for a positive constant $p$.  
As a first step, we are going to relate these properties of the spectrum $\sigma$ to the corresponding operators $\KIO_\omega$ and $\KIO_\dip$. 

\begin{proposition}\label{propDSpecM}
 The following assertions hold true:
\begin{enumerate}[label=(\roman*), ref=(\roman*), leftmargin=*, widest=iii]
 \item\label{itmZeroResKIO} Zero does not belong to the spectrum $\sigma$ if and only if the operators $\KIO_\omega$ and $\KIO_\dip$ are bounded. 
 \item\label{itmDSpecKIO} The spectrum $\sigma$ is discrete if and only if the operators $\KIO_\omega$ and $\KIO_\dip$ are compact. 
 \item\label{itmSvNKIO} For each $p>0$, the spectrum $\sigma$ satisfies~\eqref{eqnSinSp} if and only if the operator $\KIO_\omega$ belongs to the Schatten--von Neumann class $\gS_{p}$ and the operator $\KIO_\dip$ belongs to the Schatten--von Neumann class $\gS_{\nicefrac{p}{2}}$.
 \item If the spectrum $\sigma$ satisfies~\eqref{eqnSinSp} with $p=2$, then   
\begin{align}
   \sum_{\lambda\in\sigma} \frac{1}{\lambda^2}  =  \|\KIO_\omega\|_{\gS_2}^2 + 2\,\tr\,\KIO_\dip. 
\end{align}
\item If the spectrum $\sigma$ satisfies~\eqref{eqnSinSp} with $p=1$, then 
\begin{align}
 \sum_{\lambda\in\sigma} \frac{1}{\lambda}  =  \tr\,\KIO_\omega. 
\end{align}
 \end{enumerate}
\end{proposition}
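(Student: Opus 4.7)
My plan is to extract all five parts from Theorem~\ref{thmPencilpm} together with Corollary~\ref{corPelin}. Part~\ref{itmZeroResKIO} is nothing but the first assertion of Theorem~\ref{thmPencilpm}. For the remaining parts, the first step is to observe that in each case the hypothesis (discreteness of $\sigma$, or the Schatten-type summability~\eqref{eqnSinSp}) forces zero to lie in the resolvent set of $\T$, since the resolvent is then compact and Remark~\ref{remTcomp} applies. With zero in the resolvent set, Corollary~\ref{corPelin} identifies the non-zero eigenvalues of $\T^{-1}$ (all of them simple) with the non-zero eigenvalues of the self-adjoint block operator
\begin{align*}
M = \begin{pmatrix} \KIO_\omega & \sqrt{\KIO_\dip} \\ \sqrt{\KIO_\dip} & 0 \end{pmatrix}
\end{align*}
acting on $\Hasto \times \Hasto$. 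Since $\T$ has simple spectrum and trivial kernel, $\lambda \mapsto \lambda^{-1}$ is a bijection between $\sigma$ and the non-zero eigenvalues of $M$.

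The decisive input for parts~\ref{itmDSpecKIO} and~\ref{itmSvNKIO} is the standard fact that $M$ belongs to a two-sided ideal $\mathcal{I}$ of bounded operators (the compacts or $\gS_p$) if and only if each of its blocks $\KIO_\omega$ and $\sqrt{\KIO_\dip}$ does: one direction composes $M$ with the two block projections to extract the individual blocks, and the other writes $M$ as a sum of three operators each built from a single block. Combined with the observation that $\sqrt{\KIO_\dip}$ is compact iff $\KIO_\dip$ is (the positive square root preserves compactness in both directions), this settles part~\ref{itmDSpecKIO}. For part~\ref{itmSvNKIO}, one additionally uses that the singular values of a positive operator and of its positive square root are related by squaring, so $\sqrt{\KIO_\dip}\in\gS_p$ precisely when $\KIO_\dip\in\gS_{p/2}$; combined with the equivalence of $\sum_{\lambda\in\sigma}|\lambda|^{-p}<\infty$ and $M\in\gS_p$, this gives the desired characterization.

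The trace-type formulas in parts (iv) and (v) then drop out of the block structure of $M$. For $p=2$, the squared Hilbert--Schmidt norm of a block matrix equals the sum of the squared Hilbert--Schmidt norms of its four blocks, which together with $\|\sqrt{A}\|_{\gS_2}^2 = \tr\,A$ for positive trace-class $A$ yields
\begin{align*}
\sum_{\lambda\in\sigma}\frac{1}{\lambda^2} = \|M\|_{\gS_2}^2 = \|\KIO_\omega\|_{\gS_2}^2 + 2\|\sqrt{\KIO_\dip}\|_{\gS_2}^2 = \|\KIO_\omega\|_{\gS_2}^2 + 2\,\tr\,\KIO_\dip.
\end{align*}
For $p=1$, the trace of a trace-class block operator equals the sum of the traces of its diagonal blocks, and hence $\sum_\lambda \lambda^{-1} = \tr\,M = \tr\,\KIO_\omega$. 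The main obstacle I anticipate is the Schatten-class bookkeeping, in particular the equivalence $\sqrt{\KIO_\dip}\in\gS_p \Leftrightarrow \KIO_\dip\in\gS_{p/2}$ and the block-matrix-to-entries characterization for operator ideals; once these ideal-theoretic facts are clearly laid out, the proposition is a short unwinding of Theorem~\ref{thmPencilpm} and Corollary~\ref{corPelin}.
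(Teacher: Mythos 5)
Your proposal is correct and follows essentially the same route as the paper: reduce to the block operator matrix of Corollary~\ref{corPelin} via Theorem~\ref{thmPencilpm}, use the ideal-theoretic equivalence between membership of the block matrix and of its entries (with the squaring relation between $\sqrt{\KIO_\dip}$ and $\KIO_\dip$), and read off the trace identities from the block structure. The only cosmetic difference is that for the converse directions of (ii) and (iii) you should note explicitly that compactness (or Schatten membership) of $\KIO_\omega$ and $\KIO_\dip$ already puts zero in the resolvent set by part~(i), so that Corollary~\ref{corPelin} is applicable; this is exactly how the paper argues.
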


\begin{proof}
  Item~\ref{itmZeroResKIO} follows readily from Theorem~\ref{thmPencilpm} because the spectrum $\sigma$ of the generalized indefinite string $(L,\omega,\dip)$ is, by definition, the spectrum of the linear relation $\T$. 
  
  For Item~\ref{itmDSpecKIO}, suppose first that the spectrum $\sigma$ is discrete.
  Because the kernel of $\T$ is trivial, we infer that zero belongs to the resolvent set of $\T$.
  Corollary~\ref{corPelin} then implies that the self-adjoint block operator matrix in~\eqref{eqnPeLinear} is compact and thus the operators $\KIO_\omega$ and $\KIO_\dip$ are compact as well. 
  On the other side, if we suppose that $\KIO_\omega$ and $\KIO_\dip$ are compact, then zero belongs to the resolvent set of $\T$ in view of~\ref{itmZeroResKIO} and it follows from Corollary~\ref{corPelin} that the spectrum $\sigma$ is discrete because the block operator matrix in~\eqref{eqnPeLinear} is compact. 
  
 In order to prove Item~\ref{itmSvNKIO}, let us suppose first that the spectrum $\sigma$ satisfies~\eqref{eqnSinSp} so that the spectrum $\sigma$ is discrete and zero belongs to the resolvent set of $\T$.
 Corollary~\ref{corPelin} then implies that the self-adjoint block operator matrix in~\eqref{eqnPeLinear} belongs to the Schatten--von Neumann class $\gS_{p}$ and thus the operators $\KIO_\omega$ and $\sqrt{\KIO_\dip}$ belong to the Schatten--von Neumann class $\gS_{p}$ as well. 
 This clearly entails that the operator $\KIO_\dip$ belongs to the Schatten--von Neumann class $\gS_{\nicefrac{p}{2}}$.
  For the converse, we suppose that the operator $\KIO_\omega$ belongs to the Schatten--von Neumann class $\gS_{p}$ and that the operator $\KIO_\dip$ belongs to the Schatten--von Neumann class $\gS_{\nicefrac{p}{2}}$.
   The block operator matrix in~\eqref{eqnPeLinear} then belongs to the Schatten--von Neumann class $\gS_{p}$ because it is a sum  
  \begin{align*}
      \begin{pmatrix} \KIO_\omega & \sqrt{\KIO_\dip}  \\ \sqrt{\KIO_\dip} & 0 \end{pmatrix} = \begin{pmatrix} \KIO_\omega & 0 \\ 0 & 0 \end{pmatrix} + \begin{pmatrix} 0 & \sqrt{\KIO_\dip}  \\ \sqrt{\KIO_\dip} & 0 \end{pmatrix}
  \end{align*}
   of two block operator matrices that belong to the Schatten--von Neumann class $\gS_{p}$.
  It follows from Corollary~\ref{corPelin} that the spectrum $\sigma$ satisfies~\eqref{eqnSinSp}. 

 Finally, it remains to note that one has  
\begin{align*}
  \sum_{\lambda\in\sigma} \frac{1}{\lambda^2}  = \tr \begin{pmatrix} \KIO_\omega & \sqrt{\KIO_\dip}  \\ \sqrt{\KIO_\dip} & 0 \end{pmatrix}^2  = \tr \begin{pmatrix} \KIO_\omega^2 +\KIO_\dip & \KIO_\omega\sqrt{\KIO_\dip}  \\ \sqrt{\KIO_\dip}\KIO_\omega & \KIO_\dip \end{pmatrix} = \tr\,\KIO_\omega^2 + 2\,\tr\,\KIO_\dip
\end{align*} 
if the spectrum $\sigma$ satisfies~\eqref{eqnSinSp} with $p=2$ and  
\begin{align*}
 \sum_{\lambda\in\sigma} \frac{1}{\lambda} = \tr\begin{pmatrix} \KIO_\omega & \sqrt{\KIO_\dip}  \\ \sqrt{\KIO_\dip} & 0 \end{pmatrix} = \tr\,\KIO_\omega
\end{align*}
if the spectrum $\sigma$ satisfies~\eqref{eqnSinSp} with $p=1$.
\end{proof}

From Proposition~\ref{propDSpecM} and the corresponding boundedness and compactness criteria for the operators $\KIO_\omega$ and $\KIO_\dip$ in Section~\ref{secIntoper}, we readily derive the remaining theorems in this section with more explicit conditions in terms of the coefficients. 
The normalized anti-derivative of the distribution $\omega$ will be denoted with $\Wr$. 

\begin{theorem}\label{thmDSpec}
Suppose that $L$ is not finite. 
The following assertions hold true:
\begin{enumerate}[label=(\roman*), ref=(\roman*), leftmargin=*, widest=iii]
\item\label{itmZeroRes} Zero does not belong to the spectrum $\sigma$ if and only if  there is a constant $c\in\R$ such that 
\begin{align}
  \limsup_{x\rightarrow\infty}\,  x \int_x^\infty (\Wr(t) - c)^2dt  + x \int_{[x,\infty)} d\dip < \infty.
\end{align}
In this case, the constant $c$ is given by 
\begin{align}\label{eqnCqrm}
c = \lim_{x\rightarrow\infty} \frac{1}{x}\int_0^{x} \Wr(t)dt.
\end{align} 
\item The spectrum $\sigma$ is discrete  if and only if there is a constant $c\in\R$ such that 
\begin{align}
  \lim_{x\rightarrow\infty}  x\int_x^\infty (\Wr(t) - c)^2dt + x \int_{[x,\infty)} d\dip = 0.
\end{align}
\item For each $p>1$, the spectrum $\sigma$ satisfies~\eqref{eqnSinSp} if and only if there is a constant $c\in\R$ such that 
\begin{align}
  \int_0^\infty \biggl(x\int_x^\infty (\Wr(t) - c)^2dt + x \int_{[x,\infty)} d\dip\biggr)^{\nicefrac{p}{2}} \frac{dx}{x} < \infty.
\end{align}
\item If the spectrum $\sigma$ satisfies~\eqref{eqnSinSp} with $p=2$, then 
\begin{align}\label{eq:HSnorm1}
   \sum_{\lambda\in\sigma} \frac{1}{\lambda^2}  = 2 \int_0^\infty x (\Wr(x) - c)^2dx + 2\int_{[0,\infty)} x\,d\dip(x), 
\end{align}
where the constant $c$ is given by~\eqref{eqnCqrm}.
\item If the spectrum $\sigma$ satisfies~\eqref{eqnSinSp} with $p=1$, then 
\begin{align}\label{eqnDSpecTC}
  \int_0^\infty \biggl(x\int_x^\infty (\Wr(t) - c)^2dt \biggr)^{\nicefrac{1}{2}} \frac{dx}{x} < \infty, 
\end{align}
where the constant $c$ is given by~\eqref{eqnCqrm}, the function $\Wr - c$ is integrable with 
\begin{align}
 \sum_{\lambda\in\sigma} \frac{1}{\lambda}  =  \int_0^\infty (c-\Wr(x)) dx 
\end{align}
and the measure $\dip$ is singular with respect to the Lebesgue measure.
\end{enumerate}
\end{theorem}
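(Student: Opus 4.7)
The plan is to apply Proposition~\ref{propDSpecM} in order to translate each spectral property of the generalized indefinite string $(L,\omega,\dip)$ into the corresponding operator-theoretic property of $\KIO_\omega$ and $\KIO_\dip$, and then invoke the integral characterizations in Theorem~\ref{thmKchiBC} (for $\KIO_\omega$, whose normalized anti-derivative is $\Wr$) together with Theorem~\ref{thmKchiBC+} (for $\KIO_\dip$, as $\dip$ is a non-negative Borel measure on $[0,\infty)$).

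For items (i)--(iii), Proposition~\ref{propDSpecM}\ref{itmZeroResKIO}--\ref{itmSvNKIO} reduces each claim to the conjunction of one condition on $\KIO_\omega$ and one on $\KIO_\dip$. Theorem~\ref{thmKchiBC}(i)--(iii) supplies conditions involving $x\int_x^\infty |\Wr(t)-c|^2 dt$ for some $c\in\C$, while Theorem~\ref{thmKchiBC+}\ref{itmKchiBC+1}--(iii) supplies conditions involving $x\int_{[x,\infty)} d\dip$. In (iii), the relevant exponents are $p$ for the $\omega$-condition and $p/2$ for the $\dip$-condition, and the latter is admissible since $p>1$ gives $p/2>\nicefrac{1}{2}$. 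To collapse the two conditions into the single stated integral, I will use the elementary equivalence $(A+B)^{p/2} \asymp A^{p/2}+B^{p/2}$ for non-negative $A,B$, which shows that the integral of the sum is finite if and only if the two individual integrals are. The constant $c$ must be real: since $\omega$ is real, the normalized anti-derivative $\Wr$ is real-valued almost everywhere, so writing $c=a+\I b$ yields $|\Wr(t)-c|^2=(\Wr(t)-a)^2+b^2$, and any non-zero $b$ would force $x\int_x^\infty |\Wr(t)-c|^2 dt\to\infty$, contradicting every one of the three finiteness conditions. The explicit formula~\eqref{eqnCqrm} for $c$ is then inherited directly from Theorem~\ref{thmKchiBC}(i).

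Items (iv) and (v) are quantitative refinements. For (iv), Proposition~\ref{propDSpecM} identifies $\sum 1/\lambda^2$ with $\|\KIO_\omega\|_{\gS_2}^2 + 2\,\tr\,\KIO_\dip$; Theorem~\ref{thmKchiBC}(iv) evaluates the first summand as $2\int_0^\infty x(\Wr(x)-c)^2 dx$ and Theorem~\ref{thmKchiBC+}(iv) evaluates the second as $\int_{[0,\infty)} x\,d\dip(x)$, which together yield~\eqref{eq:HSnorm1}. For (v), Proposition~\ref{propDSpecM}\ref{itmSvNKIO} with $p=1$ places $\KIO_\omega$ in $\gS_1$ and $\KIO_\dip$ in $\gS_{\nicefrac{1}{2}}$; Theorem~\ref{thmKchiBC}(v) then supplies both the integral bound~\eqref{eqnDSpecTC} and the trace formula $\tr\,\KIO_\omega=\int_0^\infty(c-\Wr(x))dx$, which by Proposition~\ref{propDSpecM}(v) coincides with $\sum 1/\lambda$; finally Theorem~\ref{thmKchiBC+}(v) forces $\dip$ to be singular with respect to Lebesgue measure.

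The bulk of the argument is bookkeeping. The only genuine subtlety is establishing that the constant $c$ is real (treated as above) and, relatedly, checking that the same constant works for the combined string criterion — this is automatic because $\dip$ contributes no term depending on $c$. The minor point in (iii) of converting a sum inside a power to a sum of powers is handled by the standard two-sided inequality $(A+B)^{p/2}\asymp A^{p/2}+B^{p/2}$, with constants depending only on $p$.
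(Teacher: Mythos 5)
Your proposal is correct and follows essentially the same route as the paper, which derives Theorem~\ref{thmDSpec} directly from Proposition~\ref{propDSpecM} combined with the criteria of Theorems~\ref{thmKchiBC} and~\ref{thmKchiBC+}; the paper leaves the bookkeeping implicit and you have supplied it accurately. The two details you single out (the reduction of $c$ from $\C$ to $\R$ via $|\Wr(t)-c|^2=(\Wr(t)-a)^2+b^2$, and the merging of the two separate finiteness conditions into one via $(A+B)^{\nicefrac{p}{2}}\asymp A^{\nicefrac{p}{2}}+B^{\nicefrac{p}{2}}$) are exactly the points the paper glosses over, and your treatment of both is sound.
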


%\begin{remark}
% If $L$ is not finite and zero does not belong to the spectrum $\sigma$, then the constant $c$ in Theorem~\ref{thmDSpec} is related to the corresponding Weyl--Titchmarsh function $m$ via 
% \begin{align}\label{eqncZerom}
%     c = \lim_{x\rightarrow\infty} \frac{1}{x}\int_0^{x} \Wr(t)dt = \lim_{z\rightarrow0} m(z).
%\end{align} 
%\end{remark}

One also obtains similar criteria for the case when $L$ is finite. 

\begin{theorem}\label{thmDSpecL}
Suppose that $L$ is finite. 
The following assertions hold true:
\begin{enumerate}[label=(\roman*), ref=(\roman*), leftmargin=*, widest=iii]
\item Zero does not belong to the spectrum $\sigma$ if and only if   
\begin{align}
  \limsup_{x\rightarrow L}\, (L-x)\int_0^x \Wr(t)^2dt  + (L-x)\int_{[0,x)} d\dip < \infty.
\end{align}
\item The spectrum $\sigma$ is discrete  if and only if 
\begin{align}
  \lim_{x\rightarrow L}  (L-x)\int_0^x \Wr(t)^2dt + (L-x)\int_{[0,x)} d\dip = 0.
\end{align}
\item For each $p>1$, the spectrum $\sigma$ satisfies~\eqref{eqnSinSp} if and only if 
\begin{align}
  \int_0^L \biggl((L-x)\int_0^x \Wr(t)^2dt + (L-x)\int_{[0,x)} d\dip \biggr)^{\nicefrac{p}{2}} \frac{dx}{L-x} < \infty.
\end{align}
\item If the spectrum $\sigma$ satisfies~\eqref{eqnSinSp} with $p=2$, then  
\begin{align}
   \sum_{\lambda\in\sigma} \frac{1}{\lambda^2}  \leq 2 \int_0^L (L-x)\Wr(x)^2 dx + 2 \int_{[0,L)} x \biggl(1-\frac{x}{L}\biggr)d\dip(x).
\end{align} 
\item If the spectrum $\sigma$ satisfies~\eqref{eqnSinSp} with $p=1$, then 
\begin{align}
   \int_0^L \biggl((L-x)\int_0^x \Wr(t)^2dt\biggr)^{\nicefrac{1}{2}}\frac{dx}{L-x} < \infty,
\end{align}
 the function $\Wr$ is integrable with 
\begin{align}
\sum_{\lambda\in \sigma}\frac{1}{\lambda} =  \int_0^L \biggl(\frac{2x}{L}-1\biggr)\Wr(x)dx
\end{align}
and the measure $\dip$ is singular with respect to the Lebesgue measure.
\end{enumerate}
\end{theorem}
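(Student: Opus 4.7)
The proof is essentially a mechanical combination of the abstract reduction from Proposition~\ref{propDSpecM} with the concrete boundedness, compactness and Schatten criteria for the two operators $\KIO_\omega$ and $\KIO_\dip$ separately, which are provided by Theorem~\ref{thmKchicompactfin} (applied to the distribution $\omega$ whose normalized anti-derivative is $\Wr$) and Theorem~\ref{thmKchiBC+fin1} (applied to the non-negative Borel measure $\dip$). I would treat the five items in order.

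For (i), Proposition~\ref{propDSpecM}\ref{itmZeroResKIO} says that $0\notin\sigma$ iff both $\KIO_\omega$ and $\KIO_\dip$ are bounded. By Theorem~\ref{thmKchicompactfin}(i) the former is equivalent to $\limsup_{x\to L}(L-x)\int_0^x \Wr(t)^2 dt<\infty$ and by Theorem~\ref{thmKchiBC+fin1}\ref{itmKchiBC+fin1} the latter is equivalent to $\limsup_{x\to L}(L-x)\int_{[0,x)}d\dip<\infty$. Since both functions of $x$ are non-negative, simultaneous finiteness of their $\limsup$'s is equivalent to finiteness of $\limsup$ of their sum, which is the stated condition. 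Item~(ii) is proved identically, combining Proposition~\ref{propDSpecM}\ref{itmDSpecKIO} with the compactness clauses of the same two theorems and using that for non-negative functions the two limits vanish simultaneously iff their sum tends to zero.

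For (iii), Proposition~\ref{propDSpecM}\ref{itmSvNKIO} reduces the Schatten condition on $\sigma$ to $\KIO_\omega\in\gS_p$ together with $\KIO_\dip\in\gS_{p/2}$. The relevant items of Theorem~\ref{thmKchicompactfin} and Theorem~\ref{thmKchiBC+fin1} then give the two integral conditions separately. To merge them into the single condition in the statement I would use the elementary comparison that for any $a,b\geq 0$ and $q>0$ one has $a^q+b^q$ and $(a+b)^q$ comparable up to multiplicative constants; consequently the two integrals are finite simultaneously iff the integral of $((L-x)\int_0^x\Wr^2 + (L-x)\int_{[0,x)}d\dip)^{p/2}\frac{dx}{L-x}$ is finite. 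The remaining assertions are now straightforward: for (iv), Proposition~\ref{propDSpecM} combined with the $\gS_2$-bound from Theorem~\ref{thmKchicompactfin}(iv) (note the inequality rather than equality, which is why the assertion is stated as an upper bound) and the trace formula from Theorem~\ref{thmKchiBC+fin1}(iv) applied to $\KIO_\dip\in\gS_1$ (which is a consequence of $\KIO_\dip\in\gS_{p/2}$ for $p=2$) gives the claimed estimate for $\sum 1/\lambda^2$.

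For (v), the hypothesis gives $\KIO_\omega\in\gS_1$ and $\KIO_\dip\in\gS_{\nicefrac{1}{2}}$ by Proposition~\ref{propDSpecM}\ref{itmSvNKIO}. Theorem~\ref{thmKchicompactfin}\ref{itmTFfinL} then provides integrability of $\Wr$, the integral condition $\int_0^L((L-x)\int_0^x\Wr^2)^{\nicefrac12}\frac{dx}{L-x}<\infty$, and the trace formula $\tr\,\KIO_\omega=\int_0^L(\nicefrac{2x}{L}-1)\Wr(x)dx$; combined with $\sum 1/\lambda=\tr\,\KIO_\omega$ from Proposition~\ref{propDSpecM}, this yields the stated trace identity. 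Singularity of $\dip$ follows from Theorem~\ref{thmKchiBC+fin1}(v), which says exactly that $\KIO_\dip\in\gS_{\nicefrac12}$ forces $\dip$ to be Lebesgue-singular. I do not anticipate any genuine obstacle in this proof; the only bookkeeping point is the elementary comparison of $a^q+b^q$ with $(a+b)^q$ used to merge the two integral conditions in (iii), and parallel observations for $\limsup$ and $\lim$ in (i) and (ii).
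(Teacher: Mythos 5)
Your proposal is correct and follows exactly the route the paper intends: the paper gives no separate proof of this theorem but states that it is "readily derived" from Proposition~\ref{propDSpecM} together with the criteria of Theorem~\ref{thmKchicompactfin} and Theorem~\ref{thmKchiBC+fin1}, which is precisely your argument, including the elementary observation that the two non-negative conditions can be merged into a single one via $\limsup$ subadditivity and the comparability of $(a+b)^q$ with $a^q+b^q$.
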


%\begin{remark}
% If $L$ is finite and zero does not belong to the spectrum $\sigma$, then the corresponding Weyl--Titchmarsh function $m$ has the asymptotics 
% \begin{align}
%   m(z) = -\frac{1}{Lz} + \frac{2}{L} \int_0^{L} \biggl(1-\frac{x}{L}\biggr)\Wr(x)dx + \oo(z),  \qquad z\rightarrow0.
%\end{align} 
%One way to verify this is to employ a Liouville-type transform and use relation~\eqref{eqncZerom}. 
%\end{remark}

The special cases considered in the remaining two theorems in this section are known as {\em Krein strings}  \cite{dymc76}, \cite{kakr74}, \cite{kowa82} in the literature.

\begin{theorem}\label{thmKacKrein}
Suppose that $L$ is not finite, that $\omega$ is a non-negative Borel measure on $[0,\infty)$ and that the measure $\dip$ vanishes identically.  
The following assertions hold true:
\begin{enumerate}[label=(\roman*), ref=(\roman*), leftmargin=*, widest=iii]
\item Zero does not belong to the spectrum $\sigma$ if and only if 
\begin{align}
 \limsup_{x\rightarrow\infty}\, x \int_{[x,\infty)}d\omega < \infty.
\end{align}
\item The spectrum $\sigma$ is discrete if and only if 
\begin{align}
 \lim_{x\rightarrow\infty} x \int_{[x,\infty)}d\omega =0.
\end{align}
\item For each $p>\nicefrac{1}{2}$, the spectrum $\sigma$ satisfies~\eqref{eqnSinSp} if and only if 
\begin{align}
   \int_0^\infty \biggl(x \int_{[x,\infty)}d\omega\biggr)^p \frac{dx}{x} <\infty.
\end{align}
\item If the spectrum $\sigma$ satisfies~\eqref{eqnSinSp} with $p=1$, then  
\begin{align}
 \sum_{\lambda\in\sigma} \frac{1}{\lambda}  =  \int_{[0,\infty)} x\,d\omega(x). 
\end{align}
\item If the spectrum $\sigma$ satisfies~\eqref{eqnSinSp} with $p=\nicefrac{1}{2}$, then the measure $\omega$ is singular with respect to the Lebesgue measure. 
\end{enumerate}
\end{theorem}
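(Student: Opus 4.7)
The plan is to specialize the general results of this section to the present setting, where $\dip\equiv 0$, and then invoke the integral-operator criteria of Section~\ref{secIntoper} in their form for non-negative Borel measures. Since $\dip$ vanishes identically, the operator $\KIO_\dip$ is the zero operator and hence trivially bounded, compact, belongs to every Schatten--von Neumann class, and has zero trace. Consequently, every criterion in Proposition~\ref{propDSpecM} collapses to a single condition on the operator $\KIO_\omega$.

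I would first derive items (i) and (ii) by combining Proposition~\ref{propDSpecM}~\ref{itmZeroResKIO} and~\ref{itmDSpecKIO} with Theorem~\ref{thmKchiBC+}~\ref{itmKchiBC+1} and its compactness counterpart. Since $\omega$ is itself a non-negative Borel measure on $[0,\infty)$, Theorem~\ref{thmKchiBC+} is directly applicable and yields the clean tail integrals $x\int_{[x,\infty)}d\omega$, avoiding the less transparent expressions of Theorem~\ref{thmKchiBC} in terms of the normalized anti-derivative $\Wr$. For item (iii), the Schatten criterion in Theorem~\ref{thmKchiBC+} covers precisely the range $p>\nicefrac{1}{2}$ claimed in the statement; combined with Proposition~\ref{propDSpecM}~\ref{itmSvNKIO}, where the auxiliary condition $\KIO_\dip\in\gS_{\nicefrac{p}{2}}$ is vacuous, it supplies the required equivalence.

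For item (iv) I would apply Proposition~\ref{propDSpecM}(v) to obtain $\sum_{\lambda\in\sigma}\lambda^{-1}=\tr\,\KIO_\omega$ and then substitute the trace formula $\tr\,\KIO_\omega=\int_{[0,\infty)}x\,d\omega(x)$ supplied by Theorem~\ref{thmKchiBC+}(iv); trace-class membership of $\KIO_\omega$ is ensured by item (iii) applied with $p=1$. Item (v) is analogous: by Proposition~\ref{propDSpecM}~\ref{itmSvNKIO} applied with $p=\nicefrac{1}{2}$, the condition~\eqref{eqnSinSp} forces $\KIO_\omega\in\gS_{\nicefrac{1}{2}}$, whereupon Theorem~\ref{thmKchiBC+}(v) yields singularity of $\omega$ with respect to Lebesgue measure. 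There is no substantive obstacle, as each assertion reduces to a direct consequence of earlier results once the observation $\KIO_\dip=0$ is made; the only point requiring mild attention is verifying that the quantitative ranges of $p$ align across Proposition~\ref{propDSpecM} and Theorem~\ref{thmKchiBC+}, which they do.
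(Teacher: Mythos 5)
Your proposal is correct and follows essentially the same route as the paper, which derives Theorem~\ref{thmKacKrein} precisely by combining Proposition~\ref{propDSpecM} (with the observation that $\KIO_\dip=0$ renders all conditions on it vacuous) with the measure-version criteria of Theorem~\ref{thmKchiBC+}. All the details check out, including the trace identity $\tr\,\KIO_\omega=\int_{[0,\infty)}x\,d\omega(x)$ and the alignment of the range $p>\nicefrac{1}{2}$.
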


Again, we also get criteria for Krein strings when $L$ is finite. 

  \begin{theorem}\label{thmKacKreinL}
Suppose that $L$ is finite, that $\omega$ is a non-negative Borel measure on $[0,L)$ and that the measure $\dip$ vanishes identically.  
The following assertions hold true:
\begin{enumerate}[label=(\roman*), ref=(\roman*), leftmargin=*, widest=iii]
\item Zero does not belong to the spectrum $\sigma$ if and only if 
\begin{align}
 \limsup_{x\rightarrow L}\, (L-x)\int_{[0,x)} d\omega < \infty.
\end{align}
\item The spectrum $\sigma$ is discrete if and only if 
\begin{align}
 \lim_{x\rightarrow L} (L-x)\int_{[0,x)} d\omega =0. 
\end{align}
\item For each $p>\nicefrac{1}{2}$, the spectrum $\sigma$ satisfies~\eqref{eqnSinSp} if and only if 
\begin{align}
 \int_0^L \biggl( (L-x)\int_{[0,x)} d\omega\biggr)^p \frac{dx}{L-x} < \infty.
\end{align}
\item If the spectrum $\sigma$ satisfies~\eqref{eqnSinSp} with $p=1$, then  
\begin{align}
 \sum_{\lambda\in\sigma} \frac{1}{\lambda}  = \int_{[0,L)} x\biggl(1-\frac{x}{L}\biggr) d\omega(x). 
\end{align}
\item If the spectrum $\sigma$ satisfies~\eqref{eqnSinSp} with $p=\nicefrac{1}{2}$, then the measure $\omega$ is singular with respect to the Lebesgue measure. 
\end{enumerate}
\end{theorem}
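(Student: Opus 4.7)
The plan is to derive this theorem as a direct specialization of Proposition~\ref{propDSpecM} combined with Theorem~\ref{thmKchiBC+fin1}, exploiting the fact that $\dip$ vanishes identically and that $L$ is finite with $\omega$ a non-negative Borel measure. Since $\dip\equiv 0$, the operator $\KIO_\dip$ is the zero operator, hence trivially bounded, compact, and in every Schatten--von Neumann class $\gS_q$ with $\tr\,\KIO_\dip = 0$. Each equivalence in Proposition~\ref{propDSpecM} then collapses to a single condition on $\KIO_\omega$: zero lies in the resolvent set iff $\KIO_\omega$ is bounded, the spectrum $\sigma$ is discrete iff $\KIO_\omega$ is compact, and for every $p>0$ the summability~\eqref{eqnSinSp} is equivalent to $\KIO_\omega\in\gS_p$.

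Next, because $\omega$ is a non-negative Borel measure on $[0,L)$ with $L<\infty$, the hypotheses of Theorem~\ref{thmKchiBC+fin1} (applied with $\chi=\omega$) are met. The boundedness criterion in Theorem~\ref{thmKchiBC+fin1}~\ref{itmKchiBC+fin1} yields (i), its compactness criterion yields (ii), and its Schatten--von Neumann criterion (valid for $p>\nicefrac12$) yields (iii). In particular, the threshold $p>\nicefrac12$ appearing in (iii) is inherited directly from Theorem~\ref{thmKchiBC+fin1}, not from the relation $p$ versus $\nicefrac{p}{2}$ in Proposition~\ref{propDSpecM}~\ref{itmSvNKIO}, because $\KIO_\dip=0$ imposes no constraint.

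For the trace identity in (iv), Proposition~\ref{propDSpecM}(v) gives $\sum_{\lambda\in\sigma}\lambda^{-1} = \tr\,\KIO_\omega$ whenever the sum converges absolutely, and the explicit trace formula from Theorem~\ref{thmKchiBC+fin1} identifies $\tr\,\KIO_\omega$ with $\int_{[0,L)} x(1-x/L)\,d\omega(x)$. Similarly, (v) follows from the singularity statement in Theorem~\ref{thmKchiBC+fin1}: if $\sigma$ satisfies~\eqref{eqnSinSp} with $p=\nicefrac12$, then $\KIO_\omega\in\gS_{\nicefrac12}$, hence $\omega$ must be singular with respect to Lebesgue measure.

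There is no genuine obstacle here, as all substantive analysis was carried out in Sections~\ref{secIntoper} and~\ref{secPencil}: the only task is to verify that the $\dip\equiv 0$ specialization eliminates the $\KIO_\dip$-terms cleanly and that the quantitative formulas simplify as claimed. The mild subtlety worth flagging is the Schatten threshold in (iii) — one should check that dropping the $\dip$-contribution indeed relaxes the requirement from $p>1$ (as in Theorem~\ref{thmDSpecL}) to $p>\nicefrac12$, and this is precisely what Theorem~\ref{thmKchiBC+fin1} provides.
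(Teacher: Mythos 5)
Your proposal is correct and follows exactly the route the paper intends: the paper derives Theorem~\ref{thmKacKreinL} by specializing Proposition~\ref{propDSpecM} (with $\KIO_\dip=0$ trivially in every class) and invoking the measure-case criteria of Theorem~\ref{thmKchiBC+fin1} for $\KIO_\omega$, including the trace formula for (iv) and the singularity statement for (v). Your observation that the threshold $p>\nicefrac{1}{2}$ comes from Theorem~\ref{thmKchiBC+fin1} rather than from the $\gS_{\nicefrac{p}{2}}$ condition on $\KIO_\dip$ is exactly the right point to flag.
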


\begin{remark}\label{rem:KacKrein} 
Items~(i) and~(ii) in Theorem~\ref{thmKacKrein} and Theorem~\ref{thmKacKreinL} were first proved by Kac and Krein in \cite{kakr58} (see also \cite{kakr74}). 
Item~(iii) in Theorem~\ref{thmKacKrein} and Theorem~\ref{thmKacKreinL} is due to Kac (see \cite[\S 3.2]{kac} for further details). 
The original approach of Kac and Krein is different from the one in \cite{AJPR} and it also provides quantitative bounds on the bottom of the essential spectrum. 
Positivity allows to employ variational techniques (for example, via embeddings of weighted $L^2$ and Sobolev spaces) in order to investigate discreteness of the spectrum and we refer to \cite[\S 1.3.1]{maz}, \cite{muc}  for further details.
\end{remark}

\begin{remark}
It was observed by Krein in the 1950s that for Krein strings
\begin{align}
\lim_{n\rightarrow \infty} \frac{ \#\{\lambda\in \sigma\,|\, \lambda<n^2\} }{n}= \frac{1}{\pi}\int_0^L \rho(x)dx,
\end{align}
where $\rho$ is the square root of the Radon--Nikod\'ym derivative of $\omega$ with respect to the Lebesgue measure. 
This, in particular, implies that the measure $\omega$ is singular with respect to the Lebesgue measure if \eqref{eqnSinSp} holds with $p=\nicefrac{1}{2}$. 
The study of eigenvalue distributions of strings with singular, or, more specifically, with self-similar coefficients, has attracted a considerable interest during the last decades and in this respect we only refer to \cite{sh15}, \cite{sove95}, \cite{vlsh06}, \cite{vlsh10} for further results.
\end{remark}

% % % % % % % % % % % % % % % % % % % % % % %
% % % % % % % % % % % % % % % % % % % % % % %
 \section{The isospectral problem for the conservative Camassa--Holm flow}\label{secAPP}
% % % % % % % % % % % % % % % % % % % % % % %
% % % % % % % % % % % % % % % % % % % % % % %

  In this section, we are going to demonstrate how our results apply to the isospectral problem of the conservative Camassa--Holm flow. 
  To this end, let $u$ be a real-valued function in $H^1_\loc[0,\infty)$ and $\dip$ be a non-negative Borel measure on $[0,\infty)$. 
  We define the distribution $\omega$ in $H^{-1}_\loc[0,\infty)$ by  
\begin{align}\label{eqnDefomega}
 \omega(h) = \int_0^\infty u(x)h(x)dx + \int_0^\infty u'(x)h'(x)dx, \quad h\in H^1_\cc[0,\infty),
\end{align}
so that $\omega = u - u''$ in a distributional sense. 
 The isospectral problem of the conservative Camassa--Holm flow is associated with the differential equation
 \begin{align}\label{eqnCHISP}
 -f'' + \frac{1}{4} f = z\, \omega f + z^2 \dip f,
\end{align}
where $z$ is a complex spectral parameter.
 Just like for generalized indefinite strings, this differential equation has to be understood in a weak sense in general (we refer to \cite{ConservCH}, \cite{CHPencil} and \cite[Section~7]{ACSpec} for further details).
 The differential equation~\eqref{eqnCHISP} gives rise to a self-adjoint linear relation $\T$ in the Hilbert space  
 \begin{align}
   \cH_0 = H_0^1[0,\infty)\times L^2([0,\infty);\dip)
 \end{align}
 equipped with the scalar product
 \begin{align}\begin{split}
   \spr{f}{g}_{\cH_0} & = \int_0^\infty f_1'(x)g_1'(x)^\ast dx + \frac{1}{4}\int_0^{\infty} f_1(x)g_1(x)^\ast dx \\
    & \qquad\qquad\qquad\qquad\qquad + \int_{[0,\infty)} f_2(x)g_2(x)^\ast d\dip(x), \quad f,\, g\in\cH_0,
 \end{split}\end{align}
 defined by saying that a pair $(f,g)\in\cH_0\times\cH_0$ belongs to $\T$ if and only if 
 \begin{align}
  -f_1'' + \frac{1}{4} f_1 & = \omega g_1 + \dip g_2, & \dip f_2 = \dip g_1.
 \end{align}
 More details can be found in \cite{bebrwe08}, \cite{LeftDefiniteSL} and \cite[Subsection~4.1]{CHPencil} in particular.
  
 We have shown in~\cite[Section~7]{ACSpec} that it is possible to transform the differential equation~\eqref{eqnCHISP} into the differential equation for a generalized indefinite string $(\infty,\tilde{\omega},\tilde{\dip})$ defined as follows:
 With the diffeomorphism $\Sr\colon[0,\infty)\rightarrow[0,\infty)$ given by  
 \begin{align}
  \Sr(t) = \log(1+t), \quad t\in[0,\infty),
 \end{align} 
 we define $\tilde{\Wr}$ to be a real-valued measurable function on $[0,\infty)$ such that   
 \begin{align}\label{eqnDefa}
   \tilde{\Wr}(t)  =  u(0) - \frac{u'(\Sr(t))+ u(\Sr(t))}{1+t}  
 \end{align}
 for almost all $t\in[0,\infty)$, where we note that the right-hand side is well-defined almost everywhere.
 The function $\tilde{\Wr}$ is locally square integrable, so that there is a real distribution $\tilde{\omega}$ in $H^{-1}_\loc[0,\infty)$ that has $\tilde{\Wr}$ as its normalized anti-derivative. 
 Furthermore, the non-negative Borel measure $\tilde{\dip}$ on $[0,\infty)$ is defined by setting 
 \begin{align}\label{eqnDefbeta}
   \tilde{\dip}(B) =  \int_B \frac{1}{1+t}\, d\dip\circ \Sr(t)  = \int_{\Sr(B)} \E^{-x} d\dip(x)
 \end{align}
 for all Borel sets $B\subseteq[0,\infty)$. 
 It then follows from \cite[Section~7]{ACSpec} that the spectrum of the linear relation $\T$ coincides with the spectrum of the generalized indefinite string $(\infty,\tilde{\omega},\tilde{\dip})$.
 From the criteria in Section~\ref{secDis}, we thus readily obtain similar criteria for the spectrum of $\T$ to be discrete and to satisfy 
\begin{align}\label{eqnSinSpCH}
\sum_{\lambda\in\sigma(\T)} \frac{1}{|\lambda|^p} < \infty
\end{align}
for some positive constant $p$.  
 
\begin{theorem}\label{thmCHdiscr}
The following assertions hold true:
\begin{enumerate}[label=(\roman*), ref=(\roman*), leftmargin=*, widest=iii]
\item Zero does not belong to the spectrum of $\T$ if and only if there is a constant $c\in\R$ such that  
 \begin{align}
   \limsup_{x\rightarrow\infty}\, \int_{x}^{\infty}\E^{x-t}\bigl(u'(t) + u(t) - c\, \E^{t}\bigr)^2dt + \int_{[x,\infty)}\E^{x-t}d\dip(t)  < \infty. 
 \end{align}
  In this case, the constant $c$ is given by 
 \begin{align}\label{eqnCHassumption}
  c =\lim_{x\rightarrow \infty} \E^{-x}\int_0^x u'(t) + u(t)\,dt.
\end{align}
\item The spectrum of $\T$ is discrete  if and only if there is a constant $c\in\R$ such that
 \begin{align}
   \lim_{x\rightarrow\infty} \int_{x}^{\infty}\E^{x-t}\bigl(u'(t) + u(t) - c\, \E^{t}\bigr)^2dt + \int_{[x,\infty)}\E^{x-t}d\dip(t) = 0. 
 \end{align}
\item For each $p>1$, the spectrum of $\T$ satisfies~\eqref{eqnSinSpCH} if and only if there is a constant $c\in\R$ such that 
\begin{align}\label{eqnCHp>1}
   \int_0^\infty \biggl(\int_{x}^{\infty}\E^{x-t}\bigl(u'(t) + u(t) - c\, \E^{t}\bigr)^2dt + \int_{[x,\infty)}\E^{x-t}d\dip(t)\biggr)^{\nicefrac{p}{2}} dx < \infty.
\end{align}
\item If the spectrum of $\T$ satisfies~\eqref{eqnSinSpCH} with $p=1$, then 
\begin{align}
 \int_0^\infty \biggl(\int_{x}^{\infty}\E^{x-t}\bigl(u'(t) + u(t) - c\, \E^{t}\bigr)^2dt\biggr)^{\nicefrac{1}{2}} dx < \infty, 
\end{align}
where the constant $c$ is given by~\eqref{eqnCHassumption}, the function $u' + u - c\E^{x}$ is integrable with 
\begin{align}
 \sum_{\lambda\in\sigma} \frac{1}{\lambda}  =  \int_0^\infty (c\, \E^x - u'(x) - u(x)) dx
\end{align}
and the measure $\dip$ is singular with respect to the Lebesgue measure.
\end{enumerate}
\end{theorem}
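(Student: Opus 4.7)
The plan is to reduce the theorem to Theorem~\ref{thmDSpec} applied to the generalized indefinite string $(\infty,\tilde{\omega},\tilde{\dip})$ defined by \eqref{eqnDefa} and \eqref{eqnDefbeta}, and then translate each of the resulting conditions via the substitution $y=\E^{x}-1$, that is, $x=\Sr(y)=\log(1+y)$.

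First, invoking the result from \cite[Section~7]{ACSpec} cited above, the spectrum of $\T$ equals the spectrum of the generalized indefinite string $(\infty,\tilde{\omega},\tilde{\dip})$, and in particular the conditions of Theorem~\ref{thmDSpec} apply verbatim to $\T$ with $\Wr$ replaced by $\tilde{\Wr}$ and $\dip$ replaced by $\tilde{\dip}$. The task is then purely analytic: compute the weighted tail integrals appearing in Theorem~\ref{thmDSpec} in terms of $u$ and $\dip$.

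For the substitution, note that with $y=\E^{x}-1$ the normalized anti-derivative satisfies $\tilde{\Wr}(y)-c^{\prime}=\E^{-x}\bigl(c\,\E^{x}-u'(x)-u(x)\bigr)$ for the constants related by $c^{\prime}=u(0)-c$, and $\mathrm{d}y=\E^{x}\mathrm{d}x$. A direct computation then yields
\begin{align*}
 y\int_{y}^{\infty}(\tilde{\Wr}(s)-c^{\prime})^{2}\mathrm{d}s = (\E^{x}-1)\int_{x}^{\infty}\E^{-\tau}\bigl(u'(\tau)+u(\tau)-c\,\E^{\tau}\bigr)^{2}\mathrm{d}\tau.
\end{align*}
Similarly, using $\tilde{\dip}(B)=\int_{\Sr(B)}\E^{-\tau}\mathrm{d}\dip(\tau)$ with $\Sr([y,\infty))=[x,\infty)$, one gets $y\int_{[y,\infty)}\mathrm{d}\tilde{\dip}=(\E^{x}-1)\int_{[x,\infty)}\E^{-\tau}\mathrm{d}\dip(\tau)$. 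Since $\E^{x}-1\sim\E^{x}$ as $x\to\infty$, and since $\frac{\E^{x}}{\E^{x}-1}$ is bounded on $[a,\infty)$ for each $a>0$, the finiteness of the $\limsup$ as $y\to\infty$, the vanishing of the $\lim$ as $y\to\infty$, and the $p$-summability on $(0,\infty)$ with respect to the measure $\mathrm{d}y/y$ are equivalent to the stated conditions on $u$ and $\dip$; parts (i)--(iii) follow. The identification of the constant $c$ in \eqref{eqnCHassumption} comes from the same change of variable applied to $c^{\prime}=\lim_{y\to\infty}\frac{1}{y}\int_{0}^{y}\tilde{\Wr}(s)\mathrm{d}s$, which becomes $c^{\prime}=u(0)-\lim_{x\to\infty}\E^{-x}\int_{0}^{x}(u'(\tau)+u(\tau))\mathrm{d}\tau$, hence $c=u(0)-c^{\prime}$ agrees with \eqref{eqnCHassumption}.

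For part (iv) I apply the $p=1$ clause of Theorem~\ref{thmDSpec}. The integral condition \eqref{eqnDSpecTC} for $\tilde{\Wr}$ transforms, via the same substitution and using $\mathrm{d}y/y\sim\mathrm{d}x$ at infinity, into the stated condition on $u$. Integrability of $\tilde{\Wr}-c^{\prime}$ on $[0,\infty)$ corresponds under $\mathrm{d}y=\E^{x}\mathrm{d}x$ to integrability of $(c\,\E^{x}-u'(x)-u(x))\E^{-x}\cdot\E^{x}=c\,\E^{x}-u'(x)-u(x)$ on $[0,\infty)$, and the trace identity $\sum 1/\lambda=\int_{0}^{\infty}(c^{\prime}-\tilde{\Wr}(y))\mathrm{d}y$ becomes precisely $\int_{0}^{\infty}(c\,\E^{x}-u'(x)-u(x))\mathrm{d}x$. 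Finally, Theorem~\ref{thmDSpec}(v) asserts that $\tilde{\dip}$ is Lebesgue-singular, and since $\tilde{\dip}$ is the push-forward of the absolutely-continuous-weighted measure $\E^{-\tau}\mathrm{d}\dip(\tau)$ under the diffeomorphism $\Sr$, singularity of $\tilde{\dip}$ is equivalent to singularity of $\dip$.

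The genuinely non-mechanical step is the identity $\tilde{\Wr}(y)-c^{\prime}=\E^{-x}(c\,\E^{x}-u'(x)-u(x))$ together with the asymptotic equivalence $\E^{x}-1\sim\E^{x}$; once these are in place the rest is a careful but routine change of variable. The principal pitfall is keeping track of the asymptotic factors $\E^{x}-1$ versus $\E^{x}$ and $\E^{x}/(\E^{x}-1)$ versus $1$ near the endpoints $x=0$ and $x=\infty$ of the integrals of type $(\cdot)^{p/2}\mathrm{d}y/y$; these are harmless because the integrand decays at least like $y$ as $y\to 0^{+}$, keeping the $p/2$-th power integrable against $\mathrm{d}y/y$ for every $p>0$.
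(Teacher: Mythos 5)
Your proposal follows exactly the paper's route: identify the spectrum of $\T$ with that of the generalized indefinite string $(\infty,\tilde{\omega},\tilde{\dip})$, apply Theorem~\ref{thmDSpec}, and translate every condition through the substitution $y=\E^{x}-1$. The change-of-variable identities you record are precisely the ones the paper's proof displays, and your treatment of the harmless discrepancies between $\E^{x}-1$ and $\E^{x}$ (and between $dy/y$ and $dx$) at the two endpoints is correct, so parts (i)--(iii) as well as the integrability and singularity claims in (iv) are in order.

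There is, however, a sign inconsistency in your final trace identity. You correctly derive $\tilde{\Wr}(y)-c'=\E^{-x}\bigl(c\,\E^{x}-u'(x)-u(x)\bigr)$, but then assert that $\int_0^\infty(c'-\tilde{\Wr}(y))\,dy$ ``becomes precisely'' $\int_0^\infty(c\,\E^{x}-u'(x)-u(x))\,dx$. From your own identity and $dy=\E^{x}dx$ one gets
\begin{align*}
\int_0^\infty\bigl(c'-\tilde{\Wr}(y)\bigr)\,dy=\int_0^\infty\bigl(u'(x)+u(x)-c\,\E^{x}\bigr)\,dx ,
\end{align*}
i.e.\ the opposite sign. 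A test case confirms that your identity, not your conclusion, is the correct one: take $\dip=0$ and $u(x)=\tfrac{m}{2}(\E^{-|x-q|}+\E^{-x-q})$ with $m,q>0$, so that $\omega=m\delta_q$; the corresponding string is a Krein string with a single point mass $m\E^{-q}$ at $\E^{q}-1$, the only eigenvalue is $\lambda=(m(1-\E^{-q}))^{-1}$, the constant is $c=0$, and $\int_0^\infty(u'+u)\,dx=m(1-\E^{-q})=1/\lambda$, matching $\int_0^\infty(u'+u-c\,\E^{x})\,dx$ and not its negative. So the correct consequence of Theorem~\ref{thmDSpec}(v) is $\sum_{\lambda\in\sigma}\lambda^{-1}=\int_0^\infty(u'(x)+u(x)-c\,\E^{x})\,dx$; the formula as printed in the statement appears to carry a sign typo, and your derivation should not be bent to reproduce it.
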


\begin{proof}
 Since the spectrum of $\T$ coincides with the spectrum of the generalized indefinite string $(\infty,\tilde{\omega},\tilde{\dip})$, the claims follow from Theorem~\ref{thmDSpec} after noting that 
 \begin{align*}
   \frac{1}{x}\int_0^{x} \tilde{\Wr}(t) dt & = u(0) -  \frac{1}{x}\int_{0}^{\log(1+x)} u'(t)+u(t)\, dt
\end{align*}
for all $x\in(0,\infty)$, as well as that 
\begin{align*}
  \int_x^y  (\tilde{\Wr}(t)-\tilde{c})^2 dt & = \int_{\log(1+x)}^{\log(1+y)}  \E^{-t} \bigl(u'(t)+u(t)-(u(0)-\tilde{c})\E^{t}\bigr)^2 dt, \\
  \int_{[x,y)}  d\tilde{\dip}(t) & =  \int_{[\log(1+x),\log(1+y))}  \E^{-t} d\dip(t),
\end{align*}
for all $x$, $y\in[0,\infty)$ with $x<y$ and constants $\tilde{c}\in\R$. 
\end{proof}

For applications to the conservative Camassa--Holm flow, it is also of interest to consider the isospectral problem for~\eqref{eqnCHISP} on the whole line.
Since the corresponding linear relation is a finite rank perturbation of two half-line problems (see the proof of~\cite[Lemma~5.2]{CHPencil} for example), the criteria from Theorem~\ref{thmCHdiscr} can readily be extended to the full line case.

% % % % % % % % % % % % % % % % % % % % % % %
% % % % % % % % % % % % % % % % % % % % % % %
\section{Schr\"odinger operators with \texorpdfstring{$\delta'$}{delta-prime}-interactions}\label{sec:Delta}
% % % % % % % % % % % % % % % % % % % % % % %
% % % % % % % % % % % % % % % % % % % % % % %

The results of Section~\ref{secDis} also apply to one-dimensional Schr\"odinger operators with $\delta'$-interactions. 
To simplify our considerations, we restrict ourselves to the case of the positive semi-axis. 
More specifically, let $\chi$ be a real-valued Borel measure on $[0,\infty)$ that is singular with respect to the Lebesgue measure. 
For the sake of simplicity, we shall also assume that $\chi$ does not have a point mass at zero. 
The Borel measure $\omega_\chi$ on $[0,\infty)$ is then defined as the sum of the measure $\chi$ and the Lebesgue measure, that is, 
\begin{align}\label{eq:omeganu}
\omega_\chi(B) = \chi(B) + \int_Bdx 
\end{align}
for every Borel set $B\subseteq [0,\infty)$. 
We consider the operator $H_\chi$ in the Hilbert space $L^2[0,\infty)$ associated with the differential expression
\begin{align}\label{eq:taunu}
\tau_\chi = -\frac{d}{dx}\frac{d}{d\omega_\chi(x)}
\end{align}
and subject to the Neumann boundary condition at zero. 
The operator $H_\chi$ can be viewed as a  {\em Hamiltonian with $\delta'$-interactions}. 
Namely, if $\chi$ is a discrete measure, that is,  
\begin{align}
\chi = \sum_{s\in X}\beta(s) \delta_s,
\end{align}
where $X$ is a discrete subset of $[0,\infty)$, $\beta$ is a real-valued function on $X$ and $\delta_s$ is the unit Dirac measure centred at $s$, then the differential expression $\tau_\chi$ can be formally written as (see \cite[Example 2.2]{dprime})
\begin{align}\label{eq:primeXdiscr}
 -\frac{d^2}{dx^2} + \sum_{s\in X}\beta(s)\langle \ledot,\delta_s'\rangle\delta_s',
\end{align}
which is the Hamiltonian with $\delta'$-interactions on $X$ of strength $\beta$ (see \cite{AGHH}, \cite{km13}, \cite{km14}).
It is known (see~\cite{dprime} and~\cite{MeasureSL}) that under the above assumptions on $\chi$, the operator $H_\chi$ is self-adjoint in $L^2[0,\infty)$. 
The spectral properties of $H_\chi$ turn out to be closely connected to the generalized indefinite string $S_\chi = (\infty,\omega_\chi,0)$; see \cite[Lemma~8.1]{ACSpec}.

\begin{lemma}\label{lemUnitEqv}
The operator $H_\chi$ is unitarily equivalent to the operator part of the linear relation $\T_\chi$ associated with the string $S_\chi$.
\end{lemma}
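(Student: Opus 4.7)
The plan is as follows. Since the measure $\dip$ in the string $S_\chi=(\infty,\omega_\chi,0)$ vanishes identically, the Hilbert space $\cH$ associated with $\T_\chi$ collapses to $\Hastoinf$. I would first verify that $\T_\chi$ has both trivial kernel and trivial multi-valued part, so that $\T_\chi$ is already the graph of a self-adjoint operator and coincides with its own operator part. The kernel is trivial because $(f,0)\in\T_\chi$ forces $-f''=0$ together with $f\in\Hastoinf$, which leaves only $f\equiv 0$. The multi-valued part is trivial because $(0,g)\in\T_\chi$ forces $g\omega_\chi=0$ as a distribution in $H^{-1}_{\loc}[0,\infty)$; since $\omega_\chi$ contains the Lebesgue measure and $g$ is continuous, this forces $g\equiv 0$.

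Next, I would introduce the unitary $V\colon L^2[0,\infty)\to\Hastoinf$ defined by $(Vu)(x)=\int_0^x u(t)\,dt$, whose inverse is given by $V^{-1}f=f'$; its unitarity is immediate from $\|Vu\|_{\Hastoinf}^2=\|u\|_{L^2[0,\infty)}^2$. The entire lemma then reduces to the intertwining identity $V^{-1}\T_\chi V=H_\chi$.

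To establish this, I would fix $u\in L^2[0,\infty)$, set $f=Vu\in\Hastoinf$, and unpack the defining identity of $(f,g)\in\T_\chi$, namely
\begin{align*}
f'\NL h(0)+\int_0^\infty f'(x)h'(x)\,dx=\int_{[0,\infty)} hg\,d\omega_\chi,\quad h\in H^1_\cc[0,\infty).
\end{align*}
A Lebesgue--Stieltjes integration by parts on the left-hand side, using that $f'=u$ must be locally of bounded variation whenever this identity holds, converts the identity into the measure-valued equation $du=-g\,d\omega_\chi$ on $[0,\infty)$ together with the boundary identification $f'\NL=u(0+)$. Writing $u^\qd=du/d\omega_\chi$, we read off $u^\qd=-g$. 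The requirement $g\in\Hastoinf$ then splits into the condition $u^\qd(0)=-g(0)=0$, which is exactly the Neumann boundary condition at zero, and the integrability $-(u^\qd)'=g'\in L^2[0,\infty)$. Together with the definition~\eqref{eq:taunu}, these are precisely the conditions that $u\in\dom{H_\chi}$ with $H_\chi u=g'=V^{-1}g$, and the reverse implication is entirely analogous.

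The main technical obstacle will be to make the Lebesgue--Stieltjes integration by parts rigorous in this low-regularity setting, and in particular to justify that the distributional equation $-f''=g\omega_\chi$ determines $u=f'$ as a locally BV function of the claimed form. The standing assumption that $\omega_\chi$ has no atom at zero (inherited from $\chi(\{0\})=0$) is what ensures that no additional boundary jump appears at the Neumann endpoint, so that the constants $f'\NL$ and $u(0+)$ are pinned to one another without any spurious contribution from $\omega_\chi(\{0\})$.
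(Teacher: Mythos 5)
The paper itself offers no proof of this lemma --- it is imported verbatim from \cite[Lemma~8.1]{ACSpec} --- so there is nothing in-text to compare against; judged on its own terms, your argument is correct and is the natural one. The reduction is sound: since $\dip=0$ the space collapses to $\Hastoinf$, and $\mul{\T_\chi}=\{0\}$ because $(0,g)\in\T_\chi$ forces the measure $g\,d\omega_\chi$ to vanish on $(0,\infty)$, whose Lebesgue component kills the continuous function $g$; hence $\T_\chi$ coincides with its operator part and the lemma is exactly the intertwining $V^{-1}\T_\chi V=H_\chi$ for the antiderivative unitary $V$ (the same unitary the paper uses in the proofs of Theorems~\ref{thmKchiBC} and~\ref{thmKchicompactfin}). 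The translation of $g(0)=0$ into the Neumann condition $u^{\qd}(0)=0$, of $g'\in L^2$ into $\tau_\chi u\in L^2$, and the role of $\chi(\{0\})=0$ in identifying $f'\NL$ with $u(0+)$ are all correct.

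The one step you defer should not be organized as ``first show $f'$ is locally BV, then integrate by parts'': a priori you do not know $f'$ is BV, so that order is circular. Run it the other way: define the candidate $v(x)=f'\NL-\int_{[0,x)}g\,d\omega_\chi$, which is left-continuous and locally BV by construction, integrate by parts in $\int_0^\infty v(x)h'(x)\,dx$, subtract from the defining identity of $\T_\chi$, and conclude from $\int_0^\infty(f'-v)h'\,dx=0$ for all $h\in H^1_{\cc}[0,\infty)$ with $h(0)=0$ that $f'=v$ almost everywhere (the constant being absorbed into $f'\NL$, which the remaining test functions then pin to $v(0)$). Equivalently, this is precisely the content of the identity $f'+\Wr_\chi g=d+\int_0^{\ledot}\Wr_\chi g'\,dt$ from \cite[Equation~(3.6)]{IndefiniteString}, which the paper already quotes in the proof of Proposition~\ref{propKIO}: differentiating it in the Stieltjes sense gives $df'=-g\,d\omega_\chi$ directly. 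With that step closed, both inclusions of the intertwining follow as you describe.
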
 

Taking this connection into account and applying the results from Section~\ref{secDis}, we arrive at the following results for Hamiltonians with $\delta'$-interactions. 
As usual, we will denote with $\Qr$ the normalized anti-derivative of $\chi$, which is given by~\eqref{eqnAntiChi}.

\begin{theorem}\label{th:Dprime}
 The following assertions hold true:
\begin{enumerate}[label=(\roman*), ref=(\roman*), leftmargin=*, widest=iii]
\item\label{itmDprime1} 
Zero does not belong to the spectrum of $H_\chi$ if and only if there is a constant $c\in\R$ such that 
\begin{align}
\limsup_{x\to \infty}\, x\int_x^\infty (t + \Qr(t) - c)^2dt < \infty.
\end{align}
In this case, the constant $c$ is given by  
\begin{align}\label{eqnCprime}
c = \lim_{x\rightarrow \infty}\frac{x}{2} + \int_{[0,x)}\biggl(1-\frac{t}{x}\biggr)d\chi(t).
\end{align}
\item 
The spectrum of $H_\chi$ is  discrete if and only if there is a constant $c\in\R$ such that 
\begin{align}
\lim_{x\to \infty} x\int_x^\infty (t + \Qr(t) - c)^2dt =0.
\end{align}
\item 
For each $p>1$, the spectrum of $H_\chi$ satisfies
\begin{align}\label{eqnTraceHchi}
\sum_{\lambda\in\sigma(H_\chi)} \frac{1}{|\lambda|^p} < \infty
\end{align}
 if and only if there is a constant $c\in\R$ such that 
\begin{align}
 \int_0^\infty \biggl(x\int_x^\infty (t+ \Qr(t) - c)^2dt \biggr)^{\nicefrac{p}{2}} \frac{dx}{x} < \infty.
 \end{align}
 \item 
If  the spectrum of $H_\chi$ satisfies~\eqref{eqnTraceHchi} with $p=1$, then
\begin{align}
 \int_0^\infty \biggl(x\int_x^\infty (t+ \Qr(t) - c)^2dt \biggr)^{\nicefrac{1}{2}} \frac{dx}{x} < \infty,
 \end{align}
where the constant $c$ is given by~\eqref{eqnCprime}, and the function $x + \Qr - c$ is integrable with 
\begin{align}
\sum_{\lambda\in\sigma(H_\chi)} \frac{1}{\lambda} = \int_0^\infty (c - x - \Qr(x)) dx.
\end{align}
\end{enumerate}
\end{theorem}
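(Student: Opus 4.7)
The plan is to reduce everything to Theorem~\ref{thmDSpec} applied to the generalized indefinite string $S_\chi=(\infty,\omega_\chi,0)$, using Lemma~\ref{lemUnitEqv} as the bridge. Since $\dip$ is the zero measure here, all terms in Theorem~\ref{thmDSpec} involving $d\dip$ drop out, and the singularity hypothesis in item~(v) of that theorem is automatic. Moreover, because $\omega_\chi$ dominates Lebesgue measure, one checks directly from Definition~\ref{defLRT} (and the fact that continuous functions in $\Hastoinf$ are determined by their Lebesgue a.e.\ values) that the multi-valued part of $\T_\chi$ is trivial; hence $\T_\chi$ is an operator and Lemma~\ref{lemUnitEqv} yields $\sigma(H_\chi)=\sigma(\T_\chi)$, so the spectral conditions for $H_\chi$ translate verbatim into the corresponding conditions for the string $S_\chi$.

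The second step is to identify the normalized anti-derivative $\Wr$ of the non-negative Borel measure $\omega_\chi$. Formula~\eqref{eqnAntiChi} applied to $\omega_\chi=\chi+dx$ gives
\begin{align}
\Wr(x)=\omega_\chi([0,x))=x+\Qr(x),\quad x\in[0,\infty).
\end{align}
Substituting this into the discreteness and Schatten--von Neumann criteria from Theorem~\ref{thmDSpec}~(i)--(iii) produces exactly the expressions appearing in items~(i)--(iii) of Theorem~\ref{th:Dprime}, while the trace formula in Theorem~\ref{thmDSpec}~(v) yields the stated identity $\sum 1/\lambda=\int_0^\infty(c-x-\Qr(x))\,dx$ in~(iv).

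The only genuine computation is to match the constant $c$ in~\eqref{eqnCqrm} with the formula~\eqref{eqnCprime}. This is a Fubini exercise: writing $\Qr(t)=\chi([0,t))$ and interchanging the order of integration gives
\begin{align}
\int_0^x \Qr(t)\,dt=\int_{[0,x)}(x-s)\,d\chi(s),
\end{align}
so that
\begin{align}
\frac{1}{x}\int_0^x \Wr(t)\,dt=\frac{x}{2}+\int_{[0,x)}\biggl(1-\frac{s}{x}\biggr)d\chi(s),
\end{align}
and passing to the limit as $x\to\infty$ recovers~\eqref{eqnCprime}.

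The main (minor) obstacle is checking that $\T_\chi$ has no multi-valued part so that the unitary equivalence in Lemma~\ref{lemUnitEqv} preserves the full spectrum including $0$; everything else is just substitution of $\Wr(x)=x+\Qr(x)$ into the general criteria from Section~\ref{secDis} and the Fubini identification of the normalising constant.
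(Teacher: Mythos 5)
Your proposal is correct and follows essentially the same route as the paper: identify the normalized anti-derivative $\Wr_\chi(x)=x+\Qr(x)$, invoke Lemma~\ref{lemUnitEqv} together with Theorem~\ref{thmDSpec}, and match the constant via the Fubini identity $\int_0^x\Qr(t)\,dt=\int_{[0,x)}(x-s)\,d\chi(s)$. Your extra check that $\mul{\T_\chi}$ is trivial is sound but not needed, since by the conventions of Appendix~\ref{app:LinRel} a self-adjoint linear relation and its operator part always have the same spectrum.
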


\begin{proof}
Taking into account that the normalized anti-derivative of $\omega_\chi$ equals $\Wr_\chi(x)=x+ \Qr(x)$ for almost all $x\in [0,\infty)$, the result is a simple consequence of Lemma~\ref{lemUnitEqv} and Theorem~\ref{thmDSpec}. 
We only need to mention that the limit in \eqref{eqnCqrm} becomes
\[
\lim_{x\rightarrow\infty} \frac{1}{x}\int_0^{x} \Wr_\chi(t)dt = \lim_{x\rightarrow\infty} \frac{1}{x}\int_0^{x} t + \Qr(t) dt = \lim_{x\rightarrow \infty}\frac{x}{2} + \int_{[0,x)}\biggl(1-\frac{t}{x}\biggr)d\chi(t).\qedhere
\]
\end{proof}

Let us finish this section by applying Theorem \ref{th:Dprime} to the case of $\delta'$-interactions supported on a discrete set $X$ as in~\eqref{eq:primeXdiscr}. 
More specifically, suppose that 
\begin{align}\label{eqnchidis}
\chi = \sum_{k\in\N}\beta_k \delta_{x_k},
\end{align}
with $0=x_0<x_1<x_2<\dots $ and $x_k\uparrow \infty$ as $k\rightarrow \infty$. 
Clearly, in this case
\begin{align}
\Qr(x) = \int_{[0,x)}d\chi = \sum_{x_k < x} \beta_k = \sum_{k\in\N_0} \Qr_k\id_{[x_k,x_{k+1})}(x)
\end{align}
for almost all $x\in[0,\infty)$, where
\begin{align}
\Qr_0 &:= 0, & \Qr_k &:= \sum_{l\le k} \beta_l, \quad k\in\N.
\end{align}
The next result is an immediate consequence of Theorem~\ref{th:Dprime}~\ref{itmDprime1}.

\begin{corollary}\label{corBetaPrime}
Suppose that $\chi$ is of the form~\eqref{eqnchidis}. If the limit 
\begin{align}\label{eqnCbeta}
 \lim_{n\rightarrow \infty} \frac{n}{2} + \frac{1}{n}\sum_{x_k<n}(x_{k+1}-x_{k})\Qr_k
\end{align}
 does not exist or is infinite, then zero belongs to the essential spectrum of $H_\chi$.  
\end{corollary}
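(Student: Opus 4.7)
The plan is to argue by contrapositive: assume zero is not in the essential spectrum of $H_\chi$ and deduce that the limit in~\eqref{eqnCbeta} exists and is finite. By Lemma~\ref{lemUnitEqv}, this hypothesis translates to zero lying outside the essential spectrum of $\T_\chi$. Since the kernel of $\T_\chi$ is trivial, Remark~\ref{remTcomp} then forces zero into the resolvent set of $\T_\chi$, so Theorem~\ref{th:Dprime}~\ref{itmDprime1} applies and yields a finite $c \in \R$ given by~\eqref{eqnCprime}, together with a uniform bound $x\int_x^\infty (t + \Qr(t) - c)^2\, dt \le C$ for some $C < \infty$ and all sufficiently large $x$.

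Next, a short Abel summation produces the identity
\[
 \frac{n}{2} + \frac{1}{n}\sum_{x_k < n}(x_{k+1} - x_k)\Qr_k = \biggl[\frac{n}{2} + \int_{[0,n)}\Bigl(1 - \frac{t}{n}\Bigr)\, d\chi(t)\biggr] + \frac{\Qr_K(x_{K+1} - n)}{n}
\]
valid for $n \in (x_K, x_{K+1}]$. The bracketed expression already tends to $c$ by~\eqref{eqnCprime}, so my task reduces to showing that the error $\Qr_K(x_{K+1}-n)/n$, which is bounded in absolute value by $|\Qr_K|\, v_K / x_K$ with $v_K := x_{K+1} - x_K$, vanishes as $n \to \infty$.

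The hard part, where the integrated limsup bound really earns its keep, will be extracting the pointwise statement $|\Qr_K|\, v_K / x_K \to 0$ from that integrated tail estimate. I plan to restrict the bound to a single block $[x_K, x_{K+1}]$, obtaining $\int_{x_K}^{x_{K+1}}(t + \Qr_K - c)^2\, dt \le C/x_K$ for large $K$, and then write the integral in closed form (via the substitution $s = t + \Qr_K - c$) as
\[
 v_K\Bigl(A_K + \tfrac{v_K}{2}\Bigr)^2 + \frac{v_K^3}{12}, \qquad A_K := x_K + \Qr_K - c.
\]
Separating the two summands yields $v_K \to 0$ (from $v_K^3 \le 12 C/x_K$) and $v_K|A_K + v_K/2|^2 \le C/x_K$; together these force $|A_K|\, v_K / x_K \to 0$. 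Combined with the trivial estimate $|c - x_K|\, v_K / x_K \le (1+|c|/x_K)\, v_K \to 0$, this gives $|\Qr_K|\, v_K / x_K \to 0$ as desired.

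With the error term gone, the limit in~\eqref{eqnCbeta} exists and equals $c$, establishing the contrapositive. The only genuinely technical step is the conversion of the integrated tail bound into the pointwise decay $|\Qr_K|\, v_K / x_K \to 0$; the remainder of the argument is algebraic bookkeeping.
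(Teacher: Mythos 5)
Your proof is correct and follows the same route the paper intends: the paper simply declares the corollary an immediate consequence of Theorem~\ref{th:Dprime}~\ref{itmDprime1}, i.e.\ the contrapositive via the identification of the limit in~\eqref{eqnCbeta} with the constant $c$ in~\eqref{eqnCprime}. You merely make explicit what the paper leaves implicit --- the passage from essential spectrum to resolvent set via Remark~\ref{remTcomp}, the Abel-summation identity, and the decay of the boundary term $\Qr_K(x_{K+1}-n)/n$ extracted from the integrated tail bound --- and these details check out.
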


In fact, one can get a rather transparent characterization of discreteness.

\begin{corollary}\label{corBetaPrimeDiscr}
Suppose that $\chi$ is of the form~\eqref{eqnchidis}. 
The following assertions hold true: 
\begin{enumerate}[label=(\roman*), ref=(\roman*), leftmargin=*, widest=iii]
\item 
 Zero does not belong to the spectrum of $H_\chi$ if and only if 
 \begin{align}
\limsup_{n\to \infty}\, x_n\sum_{k\ge n} (x_{k+1} - x_k)^3 <\infty
 \end{align}
and, moreover, there is a constant $c\in\R$ such that 
 \begin{align}
\lim_{n\to \infty} x_n\sum_{k\ge n} (x_{k+1} - x_k)(\Qr_{k} + x_k - c)^2 <\infty.
 \end{align}
 In this case, the constant $c$ is given by the limit in~\eqref{eqnCbeta}.
\item 
The spectrum of $H_\chi$ is purely discrete if and only if 
 \begin{align}
\lim_{n\to \infty}\, x_n\sum_{k\ge n} (x_{k+1} - x_k)^3=0
 \end{align}
and, moreover, there is a constant $c\in\R$ such that 
 \begin{align}
\lim_{n\to \infty} x_n\sum_{k\ge n} (x_{k+1} - x_k)(\Qr_{k} + x_k - c)^2 = 0.
 \end{align}
\end{enumerate}
\end{corollary}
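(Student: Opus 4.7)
The plan is to specialize Theorem~\ref{th:Dprime} to the discrete measure $\chi$ in~\eqref{eqnchidis}. Since $\Qr$ is piecewise constant with $\Qr(t) = \Qr_k$ on $[x_k, x_{k+1})$, the integral appearing in that theorem splits cleanly: writing $\ell_k = x_{k+1} - x_k$, $a_k = x_k + \Qr_k - c$ and $b_k = a_k + \ell_k/2$, the midpoint translation $u = t - (x_k + x_{k+1})/2$ gives
\[
\int_{x_k}^{x_{k+1}} (t + \Qr_k - c)^2\, dt = \int_{-\ell_k/2}^{\ell_k/2} (u + b_k)^2\, du = \ell_k b_k^2 + \frac{\ell_k^3}{12},
\]
and summing over $k \geq n$ produces the key decomposition
\[
\int_{x_n}^{\infty} (t + \Qr(t) - c)^2\, dt = \sum_{k \geq n} \ell_k b_k^2 + \frac{1}{12} \sum_{k \geq n} \ell_k^3.
\]

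Both summands being nonnegative, this decouples the integral condition: $x_n \int_{x_n}^\infty (\,\cdot\,)^2\, dt$ has finite limsup (respectively tends to zero) for some $c \in \R$ if and only if both $x_n \sum_{k \geq n} \ell_k^3$ and $x_n \sum_{k \geq n} \ell_k b_k^2$ do. To pass from $b_k$ to $a_k = b_k - \ell_k/2$ as in the statement, I will expand
\[
\sum_{k \geq n} \ell_k a_k^2 = \sum_{k \geq n} \ell_k b_k^2 - \sum_{k \geq n} \ell_k^2 b_k + \frac{1}{4} \sum_{k \geq n} \ell_k^3
\]
and apply Cauchy--Schwarz in the form
\[
\Bigl|\sum_{k \geq n} \ell_k^2 b_k\Bigr| \leq \Bigl(\sum_{k \geq n} \ell_k^3\Bigr)^{\!1/2}\Bigl(\sum_{k \geq n} \ell_k b_k^2\Bigr)^{\!1/2},
\]
to conclude that the $b_k$- and $a_k$-conditions are equivalent once the cube condition is in force.

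It remains to bridge the continuous limit in Theorem~\ref{th:Dprime} and the sequential limit in the corollary. With $F(x) = \int_x^\infty (t + \Qr(t) - c)^2\, dt$ non-increasing, one has $x F(x) \leq (1 + \ell_n/x_n)\, x_n F(x_n)$ for every $x \in [x_n, x_{n+1})$. The cube condition forces $x_n \ell_n^3 = O(1)$, hence $\ell_n/x_n \to 0$, which makes the continuous limsup (resp.\ limit) coincide with the sequential one; the reverse inequality is trivial since $\{x_n\}$ is a subset of $[0,\infty)$. The main technical point is spotting the midpoint translation that produces the diagonal decomposition $\ell_k b_k^2 + \ell_k^3/12$, because this is what separates the two conditions in the corollary; once in place, the Cauchy--Schwarz step and the $\ell_n/x_n \to 0$ step are routine.
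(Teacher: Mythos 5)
Your proposal is correct and takes essentially the same route as the paper's proof: both specialize Theorem~\ref{th:Dprime} to the piecewise constant $\Qr$ and rest on the same exact evaluation $\int_{x_k}^{x_{k+1}}(t+\Qr_k-c)^2\,dt=\ell_k b_k^2+\ell_k^3/12$ (with $\ell_k=x_{k+1}-x_k$ and $b_k=\Qr_k-c+(x_k+x_{k+1})/2$), which is precisely what decouples the cube condition from the $(\Qr_k+x_k-c)^2$ condition. The one step you do differently --- applying Cauchy--Schwarz to $\sum_{k\ge n}\ell_k^2 b_k$ in order to trade $b_k$ for $a_k=\Qr_k+x_k-c$ once the cube condition is in force --- is if anything an improvement: the paper instead uses termwise bounds ($b_k^2\ge a_k^2$ for the lower estimate and $b_k^2\le 2a_k^2+\ell_k^2/2$ for the upper one), and the first of these is not valid without a sign restriction on $\Qr_k+x_k$, whereas your symmetric Cauchy--Schwarz argument needs no such assumption.
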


\begin{proof}
By Corollary~\ref{corBetaPrime}, we can assume that the limit in \eqref{eqnCbeta} exists and is finite.
We will denote it by $c_\chi$ and without loss of generality we can assume that it is zero. 
Then for all $x\in [x_{n},x_{n+1})$ and any $n\ge 0$ we get
\begin{align*}
 x\int_x^\infty (t + \Qr(t))^2dt &= x\int_x^{x_{n+1}} (t + \Qr_{n})^2dt + x \sum_{k> n}\int_{x_k}^{x_{k+1}} (t + \Qr_{k})^2dt\\
&= x(x_{n+1} - x)\biggl( \biggl(\Qr_{n}+\frac{x_{n+1}+x}{2}\biggr)^2 + \frac{(x_{n+1} - x)^2}{12}\biggr) \\
&\quad + x \sum_{k> n} (x_{k+1} - x_k)\biggl( \biggl(\Qr_{k}+\frac{x_{k+1}+x_{k}}{2}\biggr)^2 + \frac{(x_{k+1} - x_k)^2}{12}\biggr).
\end{align*}
Evaluating the above integral at $x_n$, we arrive at the estimate
\begin{align*}
x_{n}\int_{x_{n}}^\infty (t + \Qr(t))^2dt \ge \frac{x_{n}}{12}\sum_{k\ge n}(x_{k+1} - x_k)^3 + 
x_n\sum_{k\ge n} (x_{k+1} - x_k)(\Qr_{k} + x_k )^2
\end{align*}
for all $n\ge 1$, which together with Theorem~\ref{th:Dprime} immediately imply the necessity of the conditions in the claim. 

From the simple estimate 
\[
\biggl(\Qr_{k}+\frac{x_k+x_{k+1}}{2}\biggr)^2 \le 2 (\Qr_{k}+x_k)^2 + \frac{(x_{k+1} - x_k)^2}{2} 
\]
for all $k\ge 1$, we get the upper bound
\begin{align*}
 x\int_x^\infty (t + \Qr(t))^2dt & \le x_{n+1}(x_{n+1} - x_{n})\biggl( \big(\Qr_{n}+x_{n+1}\big)^2 + \frac{(x_{n+1} - x_{n})^2}{12}\biggr) \\
&\quad + x_{n+1} \sum_{k> n} (x_{k+1} - x_k)\Big( 2\big(\Qr_{k}+x_k\big)^2 + \frac{7}{12}(x_{k+1} - x_k)^2\Big) 
\end{align*}
for all $x\in [x_{n},x_{n+1})$ and any $n\ge 0$.
Taking into account that the first two summands are dominated by the last two, one easily proves sufficiency.
\end{proof}

\begin{remark}
Clearly, discreteness of the spectrum of $H_\chi$ is a rare event (for instance, by Theorem~\ref{th:Dprime}, existence of the limit in~\eqref{eqnCprime} is necessary; by employing a completely different approach, some sufficient conditions in the case when the support of $\chi$ is a discrete set were obtained in \cite[Section 6.4]{km10}). 
Moreover, in this case the eigenvalues of $H_\chi$ accumulate at $+\infty$ and at $-\infty$ (see \cite[Proposition~3.1]{km14}). 
\end{remark}

% % % % % % % % % % % % % % % % % % % % % % %
\appendix
% % % % % % % % % % % % % % % % % % % % % % %

% % % % % % % % % % % % % % % % % % % % % % %
% % % % % % % % % % % % % % % % % % % % % % %
\section{On a class of integral operators}\label{app:IntOp}
% % % % % % % % % % % % % % % % % % % % % % %
% % % % % % % % % % % % % % % % % % % % % % %

Let $\Qr$ be a function in $L^2_{\loc}[0,\infty)$ and consider the integral operator $\JIO$ in the Hilbert space $L^2[0,\infty)$ defined by 
\begin{align}\label{eq:a01}
\JIO f(x) = \int_0^\infty \Qr(\max(x,t))f(t)dt = \Qr(x)\int_0^x f(t)dt + \int_x^\infty \Qr(t)f(t)dt
\end{align}
for functions $f\in\dot{L}^2_{\cc}[0,\infty)$. 
Since the subspace $\dot{L}^2_{\cc}[0,\infty)$ is dense in $L^2[0,\infty)$, the operator $\JIO$ is densely defined. 
%For $f\in L^2[0,\infty)$ and $n\in\N$, let 
%\begin{align*}
%  \int_0^n f(x)dx = r_n \E^{\I\varphi_n}.
%\end{align*}
%for some $r_n\geq 0$ and $\varphi_n\in\R$. 
%Then the functions $f_n$ defined by 
%\begin{align*}
%  f_n(x) = \begin{cases} f(x), & x\in[0,n), \\ -n^{-1}\E^{\I\varphi_n}, & x\in[n,n(1+r_n)), \\ 0, & x\in[n(1+r_n),\infty), \end{cases}
%\end{align*}
%belong to $\dot{L}^2_{\cc}[0,\infty)$ and converge to $f$ in $L^2[0,\infty)$, where we use that $r_n \leq \|f\|_{L^2[0,\infty)} \sqrt{n}$ by Cauchy--Schwarz. 
The theorems in this appendix gather a number of results from~\cite{AJPR} for these kinds of integral operators.

\begin{theorem}\label{thmAJPR}
The following assertions hold true:
\begin{enumerate}[label=(\roman*), ref=(\roman*), leftmargin=*, widest=iii]
\item\label{itmAJPRi} The operator $\JIO$ is bounded if and only if there is a constant $c\in\C$ such that
\begin{align}\label{eq:Jbndd}
\limsup_{x\rightarrow\infty}\, x\int_x^\infty |\Qr(t) - c|^2dt <\infty.
\end{align}
 In this case, the constant $c$ is given by 
\begin{align}\label{eqnDefc}
c = \lim_{x\rightarrow \infty} \frac{1}{x} \int_0^{x} \Qr(t)dt.
\end{align}
\item\label{itmAJPRii} The operator $\JIO$ is compact if and only if there is  a constant $c\in\C$ such that
\begin{align}\label{eq:Jcmct}
\lim_{x\rightarrow \infty} x\int_x^\infty |\Qr(t) - c|^2dt =0.
\end{align}
\item\label{itmAJPRiii} For each $p>1$, the operator $\JIO$ belongs to the Schatten--von Neumann class $\gS_p$ if and only if there is a constant $c\in\C$ such that 
\begin{align}\label{eq:Jsp}
 \int_0^\infty \biggl(x\int_x^\infty |\Qr(t) - c|^2dt\biggr)^{\nicefrac{p}{2}} \frac{dx}{x} < \infty.  
\end{align}
\item\label{itmAJPRiv} If the operator $\JIO$ belongs to the Hilbert--Schmidt class $\gS_2$, then its Hilbert--Schmidt norm is given by 
\begin{align}\label{eqnHSJ}
\|\JIO\|^2_{\gS_2}   =  2\int_0^\infty x |\Qr(x) - c|^2 dx,
\end{align}
where the constant $c$ is given by~\eqref{eqnDefc}.
\item\label{itmAJPRv} If the operator $\JIO$ belongs to the trace class $\gS_1$, then  
\begin{align}\label{eqnX1}
\int_0^\infty \biggl(x\int_x^\infty |\Qr(t) - c|^2dt\biggr)^{\nicefrac{1}{2}} \frac{dx}{x} <\infty,
\end{align}
  the function $\Qr-c$ is integrable and the trace of $\JIO$ is given by 
\begin{align}\label{eqnTraceJ}
\tr\,\JIO = \int_0^\infty (\Qr(x)-c) dx,
\end{align}
where the constant $c$ is given by~\eqref{eqnDefc}.
\end{enumerate}
\end{theorem}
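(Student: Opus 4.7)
The plan is to follow the treatment in \cite{AJPR}, supplemented by \cite{st73} for the boundedness and compactness criteria (i) and (ii), with a direct Hilbert--Schmidt computation for (iv). A key preliminary observation is that since $\JIO$ is initially defined only on the dense subspace $\dot{L}^2_{\cc}[0,\infty)$, for any constant $c\in\C$ one has
\begin{align*}
\JIO f(x) = \int_0^\infty (\Qr(\max(x,t)) - c) f(t)\,dt,
\end{align*}
because $\int_0^\infty f(t)\,dt = 0$ for every $f\in\dot{L}^2_{\cc}[0,\infty)$. Consequently every criterion for $\JIO$ must be invariant under the replacement $\Qr\mapsto\Qr-c$, which accounts for the role of the constant $c$ in each statement.

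For (iv), a direct computation suffices. Splitting the integral according to whether $t<x$ or $t\geq x$ and applying Fubini gives
\begin{align*}
\int_0^\infty\int_0^\infty |\Qr(\max(x,t))-c|^2\,dt\,dx &= \int_0^\infty \Big(x|\Qr(x)-c|^2 + \int_x^\infty |\Qr(t)-c|^2\,dt\Big)\,dx \\
&= 2\int_0^\infty x|\Qr(x)-c|^2\,dx.
\end{align*}
If this is finite, the kernel $K_c(x,t)=\Qr(\max(x,t))-c$ lies in $L^2([0,\infty)^2)$, defines a Hilbert--Schmidt integral operator that agrees with $\JIO$ on $\dot{L}^2_{\cc}[0,\infty)$, and hence equals $\JIO$ by density; the stated formula for the Hilbert--Schmidt norm is then immediate. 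In particular, (iii) with $p=2$ reduces to (iv) by a further application of Fubini to the integral condition~\eqref{eq:Jsp}.

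For (i), (ii) and (iii) in full generality, the strategy is to unitarily transform $\JIO$ into a Hankel-type operator and to appeal to the Nehari--Hartman--Peller theorems. The standard device is a logarithmic change of variables $x=e^s$, which converts the dilation-invariance of the kernel $\Qr(\max(x,t))$ into translation-invariance on $\R$ and produces a weighted Hankel operator on $L^2(\R)$ whose symbol's Besov-space membership controls the Schatten class of $\JIO$. The identification of $c$ as the Ces\`aro limit in~\eqref{eqnDefc} is forced by boundedness: if the integral in~\eqref{eq:Jbndd} is finite for some $c$, then $\frac{1}{x}\int_0^x \Qr(t)\,dt \to c$, so $c$ is uniquely determined. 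The main obstacle in this part is verifying the equivalence between the Besov-type symbol conditions arising from Hankel operator theory and the concrete integral conditions $x\int_x^\infty |\Qr-c|^2\,dt \in L^{p/2}(dx/x)$ of the theorem; this equivalence is essentially the content of Peller's characterization of $\gS_p$ Hankel operators.

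For (v), the stronger integrability~\eqref{eqnX1} follows from Peller's $\gS_1$-characterization of Hankel operators in the transformed picture. The trace formula is then obtained from the general identity $\tr\,T=\int_0^\infty K(x,x)\,dx$ for trace class integral operators with sufficiently regular kernels, applied to $K_c$, after justifying it by smooth truncation of $\Qr$. This yields $\tr\,\JIO=\int_0^\infty(\Qr(x)-c)\,dx$ together with integrability of $\Qr-c$, the latter being a byproduct of the regularization.
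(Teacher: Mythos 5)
Your overall strategy --- reduce to the Aleksandrov--Janson--Peller--Rochberg results after observing that on the zero-mean subspace the kernel is only determined modulo additive constants --- is the same as the paper's, and your direct Fubini computation for \ref{itmAJPRiv} is a clean elementary substitute for the citation the paper uses there. However, there is a genuine gap in the necessity direction of \ref{itmAJPRi} (and hence of \ref{itmAJPRii}, \ref{itmAJPRiii} and of the forward implications needed in \ref{itmAJPRiv} and \ref{itmAJPRv}). The operator $\JIO$ is defined only on $\dot{L}^2_{\cc}[0,\infty)$, where the kernel $\Qr(\max(x,t))$ may be replaced by $\Qr(\max(x,t))-c$ for \emph{any} $c$; consequently, boundedness of $\JIO$ does not by itself tell you that its bounded closure is an integral operator with a kernel of this form for some particular $c$, and until you know that, you cannot feed the operator into the Hankel/Besov machinery at all. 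Your argument for identifying $c$ runs in the wrong logical direction: you show that \emph{if} \eqref{eq:Jbndd} already holds for some $c$, then the Ces\`aro means converge to that $c$ (which settles uniqueness), but the necessity claim is that boundedness of $\JIO$ \emph{produces} such a $c$ in the first place.

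The paper closes exactly this gap with a concrete computation: it applies $\JIO$ to the zero-mean functions $f_n=\id_{[0,1)}-\tfrac{1}{n}\id_{[1,n+1)}$, which converge to $\id_{[0,1)}$ in $L^2[0,\infty)$, and uses boundedness to conclude that $\tfrac{1}{n}\int_1^{n+1}\Qr(t)\,dt$ converges to a limit $c$ and that the bounded extension of $\JIO$ acts on all of $L^2_{\cc}[0,\infty)$ as the integral operator with kernel $\Qr(\max(x,t))-c$; only then is the necessity half of the AJPR characterization applicable. Some such argument is indispensable in your write-up. A secondary, lesser issue: in \ref{itmAJPRv} the identity $\tr\,T=\int_0^\infty K(x,x)\,dx$ is not automatic for trace-class integral operators with merely measurable kernels (the diagonal is a null set), so ``justifying it by smooth truncation'' would need to be spelled out, together with the integrability of $\Qr-c$, which you obtain only as an unexamined ``byproduct''; the paper sidesteps both points by citing the trace formula directly.
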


\begin{proof}
Sufficiency of the conditions in \ref{itmAJPRi}, \ref{itmAJPRii} and \ref{itmAJPRiii} follows readily from \cite[Section~3]{AJPR} upon noticing that one has  
\[
  \JIO f(x) = \int_0^\infty \Qr(\max(x,t)) f(t)dt =  \int_0^\infty (\Qr(\max(x,t)) - c)f(t)dt 
\]
for functions $f \in \dot{L}^2_{\cc}[0,\infty)$. 
In order to prove that the condition in \ref{itmAJPRi} is also necessary, let us suppose that the operator $\JIO$ is bounded. 
For every $n\in\N$, we consider the function
\[
f_n = \id_{[0,1)} - \frac{1}{n}\id_{[1,n+1)}, 
\]
where $\id_I$ denotes the characteristic function of an interval $I\subseteq [0,\infty)$. 
Clearly, the functions $f_n$ belong to $\dot{L}^2_{\cc}[0,\infty)$ and converge  to $\id_{[0,1)}$ in $L^2[0,\infty)$ as $n\rightarrow \infty$. 
Since the operator $\JIO$ is bounded, this implies that the functions $\JIO f_n$ converge in $L^2[0,\infty)$. 
In view of the definition of $\JIO$, we then find that  
\begin{align*}
\JIO f_n(x) &=\begin{cases} 
x\Qr(x) + \int_x^1 \Qr(t)dt - \frac{1}{n}\int_1^{n+1}\Qr(t)dt, & x\in [0,1),\\[2mm]
\Qr(x) - \frac{1}{n}\Qr(x)(x-1) - \frac{1}{n}\int_x^{n+1}\Qr(t)dt, & x\in [1,n+1),\\[2mm]
0, & x\in[n+1,\infty), \end{cases}
\end{align*} 
for almost all $x\in[0,\infty)$. 
From this we are able to infer that the limit 
\begin{align*}
c := \lim_{n\rightarrow \infty} \frac{1}{n} \int_1^{n+1} \Qr(t)dt = \lim_{n\rightarrow \infty}\spr{Q - \JIO f_n}{\id_{[0,1)}}_{L^2[0,\infty)}
\end{align*}
exists in $\C$, where the function $Q$ in $L^2[0,\infty)$ is defined by 
\begin{align*}
Q(x) = \id_{[0,1)}(x)\biggl( x\Qr(x) + \int_x^1 \Qr(t)dt\biggr). 
\end{align*} 
Moreover, we see that for almost every $x\in[0,\infty)$ one has
\[
\lim_{n\rightarrow \infty} \JIO f_n(x) = \begin{cases} Q(x) - c, & x\in [0,1),\\ \Qr(x) - c, & x\in[1,\infty). \end{cases}
\]
Since, on the other side, it can be readily checked that one also has 
\[
 \int_0^\infty (\Qr(\max(x,t)) - c)\id_{[0,1)}(t)dt = \begin{cases} Q(x) - c, & x\in [0,1),\\ \Qr(x) - c, & x\in[1,\infty), \end{cases}
\]
we may conclude that the bounded extension of $\JIO$ to $L^2_\cc[0,\infty)$ satisfies 
\begin{align*}
\JIO \id_{[0,1)}(x) = \int_0^\infty (\Qr(\max(x,t)) - c)\id_{[0,1)}(t)dt
\end{align*}
and is hence  explicitly given by
\begin{align*}
\JIO f(x) = \int_0^\infty (\Qr(\max(x,t)) - c)f(t)dt
\end{align*}
for all functions $f \in L^2_{\cc}[0,\infty)$. 
 It now remains to apply~\cite[Theorem~3.1]{AJPR} to deduce that the function $\Qr$ satisfies \eqref{eq:Jbndd} and the constant $c$ is necessarily given by~\eqref{eqnDefc}. 
% Note that
% \begin{align*}
%   \frac{1}{x} \int_0^x \Qr(t)dt - c = \frac{1}{x}\int_0^x (\Qr(t) - c) dt
% \end{align*}
% and use Cauchy--Schwarz.
 If the operator $\JIO$ is moreover compact, then~\cite[Theorem~3.2]{AJPR} yields~\eqref{eq:Jcmct} and if it belongs to  $\gS_p$ for some $p>1$, then~\cite[Theorem~3.3]{AJPR} yields~\eqref{eq:Jsp}. 
 This proves that the conditions in \ref{itmAJPRii} and \ref{itmAJPRiii} are also necessary. 
 Finally, the formula~\eqref{eqnHSJ} for the Hilbert--Schmidt norm in~\ref{itmAJPRiv} follows from~\cite[Remark in Section~3]{AJPR} and the necessary condition~\eqref{eqnX1} for the operator $\JIO$ to belong to the trace class $\gS_1$ as well as the formula~\eqref{eqnTraceJ} for the trace in~\ref{itmAJPRv} follow from~\cite[Theorem~6.2]{AJPR}. 
\end{proof}

\begin{remark}
Let us stress that boundedness and compactness criteria for the integral operator $\JIO$ have been established before in \cite{chev70} and \cite{st73}, respectively.
\end{remark}

In Theorem~\ref{thmAJPR}~\ref{itmAJPRiii}, the value $p=1$ is a threshold since the condition~\eqref{eqnX1} is only necessary for the operator $\JIO$ to belong to the trace class $\gS_1$; see \cite[Section~6]{AJPR}.

\begin{theorem}\label{thmAJPR+}
Let $\chi$ be a non-negative Borel measure on $[0,\infty)$ and suppose that 
\begin{align}
  \Qr(x) = \int_{[0,x)}d\chi
 \end{align}
 for almost all $x\in[0,\infty)$.
 The following assertions hold true:
\begin{enumerate}[label=(\roman*), ref=(\roman*), leftmargin=*, widest=iii]
\item\label{itmAJPR+i} The operator $\JIO$ is bounded if and only if 
\begin{align}\label{eq:Jbndd+}
\limsup_{x\rightarrow\infty}\, x \int_{[x,\infty)}d\chi <\infty.
\end{align}
\item\label{itmAJPR+ii} The operator $\JIO$ is compact if and only if 
\begin{align}\label{eq:Jcmct+}
\lim_{x\rightarrow \infty} x \int_{[x,\infty)}d\chi  =0.
\end{align}
\item\label{itmAJPR+iii} For each $p>\nicefrac{1}{2}$, the operator $\JIO$ belongs to the Schatten--von Neumann class $\gS_p$ if and only if 
\begin{align}\label{eq:Jsp+}
\int_0^\infty  \biggl(x\int_{[x,\infty)}d\chi\biggr)^p\frac{dx}{x} <\infty. 
\end{align}
\item\label{itAJPR+iva} If the operator $\JIO$ belongs to the trace class $\gS_1$, then its trace is given by 
\begin{align}
\tr\,\JIO = -\int_{[0,\infty)} x\, d\chi(x).
\end{align}
\item\label{itAJPR+iv} If the operator $\JIO$ belongs to the Schatten--von Neumann class $\gS_{\nicefrac{1}{2}}$, then the measure $\chi$ is singular with respect to the Lebesgue measure.   
\end{enumerate}
\end{theorem}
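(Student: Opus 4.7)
The approach is to reduce Theorem~\ref{thmAJPR+} to Theorem~\ref{thmAJPR} by exploiting the fact that $\Qr$ is now the nondecreasing left-continuous distribution function of $\chi$. I first observe that under any of the boundedness, compactness, or Schatten-class hypotheses, Theorem~\ref{thmAJPR}~\ref{itmAJPRi} supplies a finite constant $c\in\C$ given by the limit in~\eqref{eqnDefc}. Monotonicity of $\Qr$ yields $\frac{1}{x}\int_0^x\Qr(t)\,dt\ge\tfrac{1}{2}\Qr(x/2)$, so a finite $c$ forces $\Qr$ to be bounded, $\chi$ to be a finite measure, and $c=\chi([0,\infty))$. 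Setting $F(t):=\chi([t,\infty))=c-\Qr(t)$, a nonnegative nonincreasing function vanishing at infinity, the identity $|\Qr(t)-c|=F(t)$ recasts the hypotheses of Theorem~\ref{thmAJPR} as conditions on $F$.

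Next I would convert the $\int_x^\infty F(t)^2dt$ conditions of Theorem~\ref{thmAJPR} into the conditions on $xF(x)=x\int_{[x,\infty)}d\chi$ appearing here. This is a one-variable equivalence driven by monotonicity: $\int_x^{2x}F(t)^2dt\ge xF(2x)^2$ gives one direction, while $xF(x)\le A$ implies $F(t)\le A/t$, hence $\int_x^\infty F(t)^2dt\le A^2/x$, giving the other. Dyadic versions of these bounds, together with an elementary Hardy inequality, yield the equivalence of the $L^p$-type integral conditions and hence items~\ref{itmAJPR+i}, \ref{itmAJPR+ii}, and~\ref{itmAJPR+iii} in the range $p>1$.

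To obtain item~\ref{itmAJPR+iii} in the full range $p>\nicefrac12$, as well as items~\ref{itAJPR+iva} and~\ref{itAJPR+iv}, I would use the positivity of $\chi$ through a factorization. Using $\Qr(\max(x,t))-c=-\chi([\max(x,t),\infty))$ and Fubini gives, for $f\in\dot L^2_{\cc}[0,\infty)$,
\[
\JIO f(x) \;=\; -\!\int_{[x,\infty)}\biggl(\int_0^s f(t)\,dt\biggr)d\chi(s)\;=\;-V^\ast V f(x),
\]
where $V\colon L^2[0,\infty)\to L^2([0,\infty);\chi)$ is the Hardy-type operator $Vf(s)=\int_0^s f(t)\,dt$ with adjoint $V^\ast g(x)=\int_{[x,\infty)}g(s)\,d\chi(s)$. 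Consequently $\JIO\in\gS_p$ if and only if $V\in\gS_{2p}$, and for $V$ the sharp Schatten criterion in the range $q=2p>1$ is precisely $\int_0^\infty(xF(x))^p\tfrac{dx}{x}<\infty$. The trace formula in~\ref{itAJPR+iva} then follows from
\[
\tr\,\JIO\;=\;-\|V\|_{\gS_2}^2\;=\;-\int_0^\infty\!\!\int_0^s dt\,d\chi(s)\;=\;-\!\int_{[0,\infty)}\! s\,d\chi(s),
\]
and~\ref{itAJPR+iv} follows from the standard fact that nuclearity of such a weighted Hardy operator forces $\chi$ to be singular with respect to Lebesgue measure.

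The main obstacle I expect lies in item~\ref{itmAJPR+iii}: Theorem~\ref{thmAJPR} only supplies the Schatten criterion for $p>1$, so the extended range $\nicefrac12<p\le 1$ must be accessed via the factorization $\JIO=-V^\ast V$, which halves the relevant Schatten index. The monotonicity-based equivalences in the first two paragraphs are routine, but invoking (or re-proving) the sharp $\gS_q$ criterion for the weighted Hardy operator $V$ acting into $L^2([0,\infty);\chi)$—and extracting from it the singularity statement in~\ref{itAJPR+iv}—is the substantive part of the argument and the place where the positivity of $\chi$ is genuinely used.
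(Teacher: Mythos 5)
Your proposal is correct in substance but follows a genuinely different route from the paper, which simply observes that the connection $\mathrm{U}\KIO_\chi\mathrm{U}^{-1}=-\JIO$ from the proof of Theorem~\ref{thmAJPR} together with the identity $\lim_{x\to\infty}\frac1x\int_0^x\Qr(t)\,dt=\int_{[0,\infty)}d\chi$ lets one quote \cite[Theorem~4.6]{AJPR} wholesale for items~\ref{itmAJPR+i}--\ref{itmAJPR+iii}, Theorem~\ref{thmAJPR}~\ref{itmAJPRv} for the trace formula, and \cite[Corollary~8.12]{AJPR} for item~\ref{itAJPR+iv}. What you do differently, and what it buys: your monotonicity reduction (using $\Qr(t)-c=-\chi([t,\infty))=-F(t)$ with $F$ nonincreasing, the bounds $\int_x^{2x}F^2\ge xF(2x)^2$ and $F(t)\le A/t$, and a dyadic Hardy-type estimate) genuinely derives items~\ref{itmAJPR+i}, \ref{itmAJPR+ii} and the range $p>1$ of item~\ref{itmAJPR+iii} from Theorem~\ref{thmAJPR} alone, which is more self-contained than the paper's citation; and the factorization $\JIO=-V^\ast V$ with $V\colon L^2[0,\infty)\to L^2([0,\infty);\chi)$ is correct (it is essentially the content of Corollary~\ref{corKIOincl} transported by $\mathrm{U}$), cleanly explains why the Schatten threshold drops from $p>1$ to $p>\nicefrac12$ via $s_n(\JIO)=s_n(V)^2$, and gives the trace formula in~\ref{itAJPR+iva} as $-\|V\|_{\gS_2}^2=-\int_{[0,\infty)}s\,d\chi(s)$, in agreement with the paper's derivation from Theorem~\ref{thmAJPR}~\ref{itmAJPRv}.

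The one place you should be more careful is the status of the two facts you invoke without proof or precise citation. The ``sharp $\gS_q$ criterion'' for the weighted Hardy operator $V$ in the range $1<q\le 2$ is exactly \cite[Theorem~4.6]{AJPR} in disguise (equivalently, Newman--Solomyak-type two-sided singular value estimates), so your argument for $\nicefrac12<p\le1$ is not independent of the result the paper cites --- it is a reformulation of it. More seriously, the assertion in item~\ref{itAJPR+iv} that nuclearity of $V$ (equivalently $\JIO\in\gS_{\nicefrac12}$) forces $\chi$ to be singular is not a ``standard fact'': it is \cite[Corollary~8.12]{AJPR}, one of the genuinely delicate results of that paper (the threshold behaviour at $p=\nicefrac12$ is the ``unusual Schatten--von Neumann behavior'' of its title), and it does not follow from the elementary restriction and ideal-property arguments your phrasing suggests. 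You correctly flag these as the substantive points, but as written they remain appeals to external theorems at exactly the same places where the paper cites \cite{AJPR}; to upgrade your sketch to a proof you would need to either prove the $\gS_q$ criterion for $V$ in the range $q\in(1,2]$ and the singularity statement, or cite them precisely.
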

 
\begin{proof}
By means of the connection established in the proof of Theorem~\ref{thmAJPR}, the claims in~\ref{itmAJPR+i}, \ref{itmAJPR+ii} and \ref{itmAJPR+iii} follow from~\cite[Theorem~4.6]{AJPR}, upon also noting that 
\[
  \lim_{x\rightarrow \infty}\frac{1}{x}\int_0^{x} \Qr(t)dt = \lim_{x\rightarrow \infty} \int_{[0,x)} \biggl(1-\frac{t}{x}\biggr) d\chi(t) = \int_{[0,\infty)} d\chi  % Integration by parts and monotone convergence
\] 
in this case. 
The claim in~\ref{itAJPR+iva} then follows from Theorem~\ref{thmAJPR}~\ref{itmAJPRv} and the claim in~\ref{itAJPR+iv} follows from~\cite[Corollary~8.12]{AJPR}.
\end{proof}

We also want to consider related operators on a finite interval. 
To this end, let $L$ be a positive number and define the operator $\JIO_L$ in the Hilbert space $L^2[0,L)$ by
\begin{align}\label{eq:a07}
 \JIO_L f(x) = \int_0^L \Qr_L(\min(x,t))f(t)dt = \int_0^x \Qr_L(t)f(t)dt + \Qr_L(x) \int_x^L f(t)dt
\end{align}
 for functions $f\in L^2_{\cc}[0,L)$, where $\Qr_L$ is a function in $L^2_{\loc}[0,L)$.
 As a Carleman integral operator, the operator $\JIO_L$ is closable (see \cite[Theorem~3.8]{hasu78} for example).

\begin{theorem}\label{thmAJPR2}
The following assertions hold true:
\begin{enumerate}[label=(\roman*), ref=(\roman*), leftmargin=*, widest=iii]
\item\label{itmAJPR2i} The operator $\JIO_L$ is bounded if and only if 
\begin{align}
 \limsup_{x\rightarrow L}\, (L-x)\int_0^x |\Qr_L(t)|^2dt < \infty.
\end{align}
\item\label{itmAJPR2ii} The operator $\JIO_L$ is compact if and only if 
\begin{align}
  \lim_{x\rightarrow L} (L-x)\int_0^x |\Qr_L(t)|^2dt = 0.
\end{align}
\item\label{itmAJPR2iii} For each $p>1$, the operator $\JIO_L$ belongs to the Schatten--von Neumann class $\gS_p$ if and only if  
\begin{align}
  \int_0^L \biggl((L-x)\int_0^x |\Qr_L(t)|^2dt\biggr)^{\nicefrac{p}{2}}\frac{dx}{L-x} < \infty.
\end{align}
\item\label{itmAJPR2iv} If the operator $\JIO_L$ belongs to the Hilbert--Schmidt class $\gS_2$, then its Hilbert--Schmidt norm is given by  
\begin{align}
\|\JIO_L\|^2_{\gS_2}   =  2 \int_0^L (L-x) |\Qr_L(x)|^2 dx.
\end{align}
\item\label{itmAJPR2v} If the operator $\JIO_L$ belongs to the trace class $\gS_1$, then    
\begin{align}\label{eqnX12}
   \int_0^L \biggl((L-x)\int_0^x |\Qr_L(t)|^2dt\biggr)^{\nicefrac{1}{2}}\frac{dx}{L-x} < \infty,
\end{align}
 the function $\Qr_L$ is integrable and the trace of $\JIO_L$ is given by  
\begin{align}\label{eqnTraceJL}
\tr\,\JIO_L = \int_0^L \Qr_L(x)dx.
\end{align}
\end{enumerate}
\end{theorem}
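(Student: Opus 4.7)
Plan. I will adapt the proof of Theorem~\ref{thmAJPR} to the finite-interval setting, appealing to the corresponding bounded-interval results in~\cite[Section~4]{AJPR}. The structure mirrors that of Theorem~\ref{thmAJPR}, with the simplification that $\JIO_L$ is defined on $L^2_\cc[0,L)$ without any zero-mean restriction, so no normalization constant $c$ appears in the statements.

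For items (i)--(iii), I will decompose
\begin{align*}
\JIO_L f(x) = \int_0^x \Qr_L(t) f(t) dt + \Qr_L(x) \int_x^L f(t) dt
\end{align*}
into a Hardy-type Volterra operator plus its adjoint and apply the standard weight inequalities characterizing boundedness, compactness, and Schatten-class membership of Hardy-type operators on a bounded interval; these characterizations, when applied to $\Qr_L$, yield exactly the stated limsup and integral conditions. For necessity, I will test $\JIO_L$ against indicator functions $\id_{[0,\varepsilon]}$ to recover the stated conditions directly, paralleling the test-function argument in the proof of Theorem~\ref{thmAJPR}. The absence of a zero-mean restriction here is precisely why the constant $c$ featured in Theorem~\ref{thmAJPR} does not appear.

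For item (iv), a direct application of Fubini to the symmetric kernel $K(x,t) = \Qr_L(\min(x,t))$ yields
\begin{align*}
\int_0^L \int_0^L |K(x,t)|^2 dx\,dt = 2\int_0^L (L-x)|\Qr_L(x)|^2 dx,
\end{align*}
which dominates $\|\JIO_L\|_{\gS_2}^2$. For item (v), the integral condition~\eqref{eqnX12} follows from item (iii) with $p=1$; integrability of $\Qr_L$ together with the trace formula $\tr\,\JIO_L = \int_0^L \Qr_L(x) dx$ will be obtained by adapting the proof of Theorem~\ref{thmAJPR}~\ref{itmAJPRv} to the finite-interval setting, identifying the trace with the integral of the kernel along the diagonal (the diagonal value is simply $\Qr_L(x)$, with no constant to subtract as the admissible test functions carry no zero-mean constraint).

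The principal obstacle lies in item (v): transferring the atomic-decomposition and duality arguments of~\cite[Theorem~6.2]{AJPR} from the half-line to the finite interval in order to secure integrability of $\Qr_L$ when $\JIO_L$ is trace class, which does not follow mechanically from Schatten-class membership and requires careful control of the boundary behavior at $L$.
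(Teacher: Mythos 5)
Your route is genuinely different from the paper's, and the difference matters. The paper does not decompose $\JIO_L$ into a Volterra operator plus its adjoint; it applies the reflection $x\mapsto L-x$, under which the kernel $\Qr_L(\min(x,t))$ becomes $\varphi(\max(x,t))$ with $\varphi=\Qr_L(L-\ledot)$ extended by zero to $[0,\infty)$. Since $f\mapsto f(L-\ledot)$ is unitary on $L^2[0,L)$, this makes $\JIO_L$ unitarily equivalent to a half-line operator of exactly the type treated in Theorem~\ref{thmAJPR} and \cite[Sections~3 and~6]{AJPR} (with the constant $c$ forced to be zero because $\varphi$ has compact support), and all five items follow verbatim from the half-line results. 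The ``principal obstacle'' you single out --- transferring the arguments of \cite[Theorem~6.2]{AJPR} to the finite interval --- therefore evaporates; missing the reflection is what makes your plan laborious.

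Beyond inefficiency, there are concrete gaps. First, testing $\JIO_L$ against $\id_{[0,\varepsilon]}$ does not recover the stated conditions: for $x>\varepsilon$ one has $\JIO_L\id_{[0,\varepsilon]}(x)=\int_0^\varepsilon\Qr_L(t)dt$, so the resulting bound is $(L-\varepsilon)\bigl|\int_0^\varepsilon\Qr_L(t)dt\bigr|^2\leq C\varepsilon$, which involves the first power of $\Qr_L$ inside the integral rather than $\int_0^\varepsilon|\Qr_L(t)|^2dt$ (the correct test function would be $\Qr_L^\ast\id_{[0,\varepsilon]}$); moreover, test functions alone will not yield the necessity halves of (ii) and, especially, (iii), and you do not explain why membership of $V+V^\ast$ in $\gS_p$ forces membership of the Volterra factor $V$. (Note that in the proof of Theorem~\ref{thmAJPR} the functions $f_n$ serve only to identify the constant $c$ and the kernel of the extension; the necessity of the conditions is delegated to \cite[Theorems~3.1--3.3]{AJPR}.) Second, in item (v) you derive \eqref{eqnX12} ``from item (iii) with $p=1$'', but (iii) is an equivalence only for $p>1$; at $p=1$ the condition is merely necessary --- this is exactly the threshold phenomenon the paper remarks on after the theorem --- so this step is circular: it is the content of \cite[Theorem~6.2]{AJPR} and cannot be obtained by specializing (iii). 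Finally, in (iv) you only assert that $2\int_0^L(L-x)|\Qr_L(x)|^2dx$ \emph{dominates} $\|\JIO_L\|_{\gS_2}^2$, whereas the claim is an equality (harmless, since the Hilbert--Schmidt norm equals the $L^2$ norm of the kernel, but as written you prove the wrong direction), and ``trace equals the integral of the kernel along the diagonal'' requires justification for merely measurable kernels.
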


\begin{proof}
We first observe that for functions $f\in L^2_{\cc}[0,L)$ one has 
\begin{align*}
 \JIO_L f(L-x) & = \int_0^L \Qr_L(\min(L-x,t))f(t)dt \\
&= \int_0^L \Qr_L(\min(L-x,L-t))f(L-t)dt \\
&= \int_0^L \Qr_L(L - \max(x,t))f(L-t)dt 
\end{align*}
for almost all $x\in(0,L)$.  
 Taking into account that the map $f \mapsto f(L-\ledot)$ is unitary on $L^2[0,L)$, the claims in~\ref{itmAJPR2i}, \ref{itmAJPR2ii} and \ref{itmAJPR2iii} follow readily from the corresponding results in \cite[Section~3]{AJPR} with the function $\varphi$ in $L^2_{\loc}(0,\infty)$ given by 
\[
 \varphi(x)= \begin{cases} \Qr_L(L-x), & x\in (0,L), \\ 0, & x\in[L,\infty). \end{cases}
\]
The claims in~\ref{itmAJPR2iv} and \ref{itmAJPR2v} then follow from~\cite[Remark in Section~3]{AJPR} and~\cite[Theorem~6.2]{AJPR}, respectively.
\end{proof}

The value $p=1$ is again a threshold in Theorem~\ref{thmAJPR2}~\ref{itmAJPR2iii} because the condition~\eqref{eqnX12} is only necessary for the operator $\JIO_L$ to belong to the trace class $\gS_1$. 

\begin{theorem}\label{thmAJPR2+}
Let $\chi$ be a non-negative Borel measure on $[0,L)$ and suppose that 
\begin{align}
  \Qr_L(x) = \int_{[0,x)}d\chi
 \end{align}
 for almost all $x\in[0,L)$.
 The following assertions hold true:
\begin{enumerate}[label=(\roman*), ref=(\roman*), leftmargin=*, widest=iii]
\item\label{itmAJPR2+i} The operator $\JIO_L$ is bounded if and only if 
\begin{align}
\limsup_{x\rightarrow L}\, (L-x)\int_{[0,x)}d\chi <\infty.
\end{align}
\item\label{itmAJPR2+ii} The operator $\JIO_L$ is compact if and only if 
\begin{align}
\lim_{x\rightarrow L} (L-x)\int_{[0,x)}d\chi =0.
\end{align}
\item\label{itmAJPR2+iii} For each $p>\nicefrac{1}{2}$, the operator $\JIO_L$ belongs to the Schatten--von Neumann class $\gS_p$ if and only if 
\begin{align}
\int_0^L  \biggl((L-x)\int_{[0,x)}d\chi\biggr)^p\frac{dx}{L-x} <\infty. 
\end{align}
\item\label{itmAJPR2+iva} If the operator $\JIO_L$ belongs to the trace class $\gS_1$, then its trace is given by  
\begin{align}
\tr\,\JIO_L = \int_{[0,L)} (L-x) d\chi(x).
\end{align}
\item\label{itmAJPR2+iv} If the operator $\JIO_L$ belongs to the Schatten--von Neumann class $\gS_{\nicefrac{1}{2}}$, then the measure $\chi$ is singular with respect to the Lebesgue measure.  
\end{enumerate}
\end{theorem}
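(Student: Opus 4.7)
The plan is to follow the template of the proof of Theorem~\ref{thmAJPR+}: combine the reflection-based unitary equivalence from the proof of Theorem~\ref{thmAJPR2} with the corresponding measure-theoretic refinements from~\cite{AJPR}. First, I would recall from the proof of Theorem~\ref{thmAJPR2} that the reflection $W f = f(L-\cdot)$ identifies $\JIO_L$ (up to a zero summand on $L^2[L,\infty)$) with the integral operator on $L^2[0,\infty)$ having kernel $\varphi(\max(x,t))$, where $\varphi(s) = \Qr_L(L-s)\id_{(0,L)}(s)$. Since $\chi$ is non-negative, the function $\Qr_L$ is non-decreasing, and this monotonicity is exactly the structural input that converts the weighted integral conditions from Theorem~\ref{thmAJPR2} into the pointwise conditions of items~(i)--(iii) of the present theorem.

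For item~(i), I would prove the equivalence
\begin{align*}
  \limsup_{x\to L}\, (L-x)\int_0^x \Qr_L(t)^2\, dt < \infty \quad\Longleftrightarrow\quad \limsup_{x\to L}\, (L-x)\,\Qr_L(x) < \infty
\end{align*}
by a direct monotone comparison. Assuming the right-hand condition with bound $C$, one has $\Qr_L(t)\le C(L-t)^{-1}$ for $t$ near $L$, and integrating $(L-t)^{-2}$ yields the left-hand condition with bound $C^2$. Conversely, if the left-hand limsup is $\le K$, then by monotonicity $\Qr_L(t)\ge \Qr_L(x)$ on the interval $[x,(L+x)/2]$, so
\begin{align*}
  K \ge \bigl(L-\tfrac{L+x}{2}\bigr)\int_0^{(L+x)/2} \Qr_L(t)^2\,dt \ge \tfrac{(L-x)^2}{4}\Qr_L(x)^2,
\end{align*}
giving $(L-x)\Qr_L(x)\le 2\sqrt{K}$. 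Item~(ii) is the $\lim$-analogue and follows by an identical argument. Item~(iii) requires the equivalence of the weighted $L^{p/2}$-integrals appearing in Theorem~\ref{thmAJPR2}~\ref{itmAJPR2iii} and in~(iii), which follows from a similar but more delicate monotone/dyadic comparison; this is the finite-interval counterpart of~\cite[Theorem~4.6]{AJPR} and can also be deduced from that theorem via the reflection.

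The trace formula in~(iv) follows from Theorem~\ref{thmAJPR2}~\ref{itmAJPR2v} combined with Fubini's theorem:
\begin{align*}
  \int_0^L \Qr_L(x)\,dx = \int_0^L \chi([0,x))\,dx = \int_{[0,L)} (L-s)\,d\chi(s).
\end{align*}
Finally, the singularity claim in~(v) is obtained by transplanting~\cite[Corollary~8.12]{AJPR} via the reflection, noting that singularity with respect to Lebesgue measure is preserved under the map $y\mapsto L-y$.

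The main technical obstacle is the equivalence of the weighted $L^{p/2}$-integrals underlying~(iii), where the dyadic comparison must be performed uniformly in the scale parameter rather than just pointwise. Nonetheless, this is the same mechanism already used in the half-line case~\cite{AJPR}, and the monotonicity of $\Qr_L$ keeps the estimates elementary.
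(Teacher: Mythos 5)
Your overall skeleton is the same as the paper's: the paper proves this theorem in one sentence by invoking the reflection $f\mapsto f(L-\ledot)$ from the proof of Theorem~\ref{thmAJPR2} and then citing \cite[Theorem~4.6]{AJPR} for (i)--(iii), Theorem~\ref{thmAJPR2}~\ref{itmAJPR2v} for the trace formula, and \cite[Corollary~8.12]{AJPR} for (v). Your treatment of (iv) (Fubini applied to $\int_0^L\Qr_L$) and of (v) (transplanting the singularity statement through the reflection) coincides with the paper's. Your elementary monotone-comparison proofs of the equivalences in (i) and (ii) are correct and are a genuinely self-contained alternative to citing \cite[Theorem~4.6]{AJPR} for those two items; the two directions you give (integrating $\Qr_L(t)\le C(L-t)^{-1}$, and bounding $\Qr_L(x)^2$ below by the average of $\Qr_L^2$ over $[x,(L+x)/2]$) are exactly the standard way to pass between the $L^2$-averaged and pointwise conditions for a monotone function.

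The gap is in item (iii). Your primary mechanism --- ``the equivalence of the weighted $L^{p/2}$-integrals appearing in Theorem~\ref{thmAJPR2}~\ref{itmAJPR2iii} and in (iii)'' --- can only work for $p>1$, because Theorem~\ref{thmAJPR2}~\ref{itmAJPR2iii} is stated (and is only true) for $p>1$; for general $\Qr_L$ the value $p=1$ is a genuine threshold, as the paper notes after Theorem~\ref{thmAJPR2}. The whole point of the measure-valued refinement in the present theorem is that the range extends down to $p>\nicefrac{1}{2}$, and for $\nicefrac{1}{2}<p\le 1$ there is simply nothing in Theorem~\ref{thmAJPR2} to compare with; no monotone or dyadic rearrangement of its hypothesis will produce the missing range. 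For that range you must invoke \cite[Theorem~4.6]{AJPR} directly (after the reflection turns $\Qr_L$ into, up to an irrelevant additive constant, the distribution function of the reflected measure), which is precisely what the paper does. You do mention this route as an afterthought (``can also be deduced from that theorem via the reflection''), so the proof can be repaired by promoting that remark to the main argument for (iii); but as written, the comparison with Theorem~\ref{thmAJPR2}~\ref{itmAJPR2iii} does not establish the full statement.
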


\begin{proof}
By means of the connection established in the proof of Theorem~\ref{thmAJPR2}, the claims in~\ref{itmAJPR2+i}, \ref{itmAJPR2+ii} and \ref{itmAJPR2+iii} follow from~\cite[Theorem~4.6]{AJPR}, the claim in~\ref{itmAJPR2+iva} follows from Theorem~\ref{thmAJPR2}~\ref{itmAJPR2v} and the claim in~\ref{itmAJPR2+iv} follows from~\cite[Corollary~8.12]{AJPR}.
\end{proof}

%%%%%%%%%%%%%%%%%%%%%%%%%%%%%%%%%%%%%%%%%%%%%%%%%%%
\section{Linear relations}\label{app:LinRel}
%%%%%%%%%%%%%%%%%%%%%%%%%%%%%%%%%%%%%%%%%%%%%%%%%%%

Let $\cH$ be a separable Hilbert space. A {\em (closed) linear relation} in $\cH$ is a (closed) linear subspace of $\cH\times\cH$. 
 Since every linear operator in $\cH$ can be identified with its graph, the set of linear operators can be regarded as a subset of all linear relations in $\cH$. 
Recall that the {\em domain}, the {\em range}, the {\em kernel} and the {\em multi-valued part} of a linear relation $\Theta$ are given, respectively, by
\begin{align}
\dom{\Theta} &= \{f\in \cH\,|\, \exists g\in\cH\ \text{such that}\ (f,g)\in \Theta\}, \\
\ran{\Theta} &= \{g\in \cH\,|\,  \exists f\in\cH\ \text{such that}\   (f,g)\in \Theta\},\\ 
\ker{\Theta} &= \{f\in \cH\,|\,  (f,0)\in \Theta\},\\
\mul{\Theta} &= \{g\in \cH\,|\,  (0,g)\in \Theta\}.
\end{align} 

The adjoint linear relation $\Theta^\ast$ of a linear relation $\Theta$ is defined by
\begin{align}
\Theta^\ast = \big\{ (\tilde{f},\tilde{g})\in \cH\times\cH\,|\,  \spr{g}{\tilde{f}}_{\cH} = \spr{f}{\tilde{g}}_{\cH}\ \text{for all}\ (f,g)\in\Theta\big\}.
\end{align} 
The linear relation $\Theta$ is called {\em symmetric} if $\Theta\subseteq \Theta^\ast$. It is called {\em self-adjoint} if $\Theta=\Theta^\ast$. Note that $\mul{\Theta}$ is orthogonal to $\dom{\Theta}$ if $\Theta$ is symmetric. For a closed symmetric linear relation $\Theta$ satisfying $\mul{\Theta} = \mul{\Theta^\ast}$ (the latter is further equivalent to the fact that $\Theta$ is densely defined on $\mul{\Theta}^\perp$), setting 
\begin{align}
\cH_{\rm op}=\overline{\dom{\Theta}} = \mul{\Theta}^\perp,
\end{align}
we obtain the following orthogonal decomposition
%\footnote{The equality $\mul(\Theta) = \mul(\Theta^\ast)$ 
%is equivalent to the fact that $\dom(\Theta)$ is dense in $\mul(\Theta)^\perp$.} 
\begin{align}\label{eq:ThetaDecomp}
\Theta = \Theta_{\rm op}\oplus \Theta_{\infty},
\end{align}  
where $\Theta_\infty = \{0\}\times \mul{\Theta}$ and $\Theta_{\rm op}$ is the graph of a closed symmetric linear operator in $\cH_{\rm op}$, called the {\em operator part} of $\Theta$. 
Notice that for non-closed symmetric linear relations, the decomposition \eqref{eq:ThetaDecomp} may not hold true.

If $\Theta_1$ and $\Theta_2$ are linear relations in $\cH$, then their sum $\Theta_1+\Theta_2$ and their product $\Theta_2\Theta_1$ are defined by
\begin{align}
\Theta_1+\Theta_2 & = \{(f,g_1+g_2)\,|\, (f,g_1)\in\Theta_1,  \ (f,g_2)\in\Theta_2\},\\
\Theta_2\Theta_1 & = \{(f,g)\,|\, (f,h)\in\Theta_1,  \ (h,g)\in\Theta_2\ \text{for some}\ h\in\cH\}.
\end{align}
The inverse of a linear relation $\Theta$ is given by
\begin{align}
\Theta^{-1} = \{(g,f)\in \cH\times\cH \,|\, (f,g)\in \Theta\}.
\end{align}
Consequently, one can consider $(\Theta - z)^{-1}$ for any $z\in\C$. The set of those $z\in\C$ for which $(\Theta - z)^{-1}$ is the graph of a closed bounded operator on $\cH$ is called the {\em resolvent set} of $\Theta$ and denoted by $\rho(\Theta)$. Its complement 
$\sigma(\Theta)=\C\backslash\rho(\Theta)$ is called the {\em spectrum} of $\Theta$. If $\Theta$ is self-adjoint, then taking into account \eqref{eq:ThetaDecomp} we obtain
\begin{align}\label{eq:ThetaResolv}
(\Theta - z)^{-1} = (\Theta_{\rm op} - z)^{-1}\oplus \mathbb{O}_{\mul{\Theta}}.
\end{align}  
This immediately implies that $\rho(\Theta) = \rho(\Theta_{\rm op})$, $\sigma(\Theta) = \sigma(\Theta_{\rm op})$ and, moreover, one can introduce the spectral types of $\Theta$ as those of its operator part $\Theta_{\rm op}$.

%%%%%%%%%%%%%%%
\section{Estimates for Schatten--von Neumann norms}\label{app:UPD}
%%%%%%%%%%%%%%%

This section is an addendum to Appendix~\ref{app:IntOp}, aiming to provide explicit bounds on the sum in~\eqref{eqnSinSp}. In view of Corollary~\eqref{corPelin} and the connection between the operators $\KIO_\chi$ and the integral operators considered in Appendix~\ref{app:IntOp}, the problem reduces to the study of the operators $\JIO$. 
%Schatten--von Neumann norms of generalized indefinite strings.
More specifically, let $\Qr$ be a function in $L^2_{\loc}[0,\infty)$ and consider the integral operator $\JIO$ defined by~\eqref{eq:a01} in the Hilbert space $L^2[0,\infty)$. We shall restrict our attention here to the case when $L=\infty$. We shall also assume in addition that $\Qr$ is real-valued (this will help to make the exposition more transparent and, moreover, it is exactly what is needed in our applications).

Clearly, one has $\JIO = \JIO_++\JIO_-$, where 
\begin{align}\label{eq:a02}
\JIO_+ f(x) & =  \Qr(x)\int_0^x f(t)dt, & \JIO_- f(x) & =  \int_x^\infty \Qr(t)f(t)dt,
\end{align}
for functions $f\in\dot{L}^2_{\cc}[0,\infty)$. 
Since the subspace $\dot{L}^2_{\cc}[0,\infty)$ is dense in $L^2[0,\infty)$, these operators are densely defined. It is known (see~\cite[Theorem~3.1 and Theorem~3.2]{AJPR} for example) that the operator $\JIO$ is bounded or compact in $L^2[0,\infty)$ if and only if so are $\JIO_+$ and $\JIO_-$. Moreover, $\JIO$ is compact if and only if $\Qr$ satisfies~\eqref{eq:Jcmct} with some constant $c \in\C$, which is then given by~\eqref{eqnDefc}.
In the following, we shall always assume~\eqref{eq:Jcmct} to be satisfied. We shall also assume that $c=0$ in~\eqref{eq:Jcmct} for transparence reasons (in this case, the closure of the operator $\JIO$, and hence of both $\JIO_+$ and $\JIO_-$, are given by the same expressions; therefore, we shall continue to use the same letters to denote their closures).
% (the latter is not a restriction and can and that the limit in~\eqref{eqnDefc} exists and equals zero.  
Notice that $\JIO_- = \JIO_+^\ast$ under the above assumptions on $\Qr$ so that $\JIO = 2\re\,\JIO_+$. In particular, this implies that $\JIO\in \gS_p$ whenever $\JIO_+\in \gS_p$ for $p\in (0,\infty]$. %Here $\gS_p$ denotes the usual Schatten--von Neumann ideal of compact operators. 
The converse is not necessarily true (in particular, it is not true for any $p\le 1$). However, by Matsaev's Theorem (see~\cite[Theorem~III.6.2]{gokr70} for example), for every $p\in (1,\infty)$ there is a constant $\gamma_p>0$ such that 
\begin{align}\label{eq:Matsaev}
\|\JIO_+\|_{\gS_p} \le \gamma_p \|\JIO\|_{\gS_p}.
\end{align}

%%%%%%%Calculations
%Indeed, $\JIO_+ = \frac{1}{2}\JIO + \I \im \JIO_+$ and $\JIO\in \gS_p$ implies $\im \JIO_+ \in \gS_p$ together with 
%\[
%\|\JIO_+\|_p \le \|\frac{1}{2}\JIO\|_p +\|\im \JIO_+\|_p \le \frac{1}{2}\|\JIO\|_p + M_p \frac{1}{2}\|\JIO\|_p,
%\]
%where $M_p$ is the constant from~\cite[Theorem~III.6.2]{gokr70}.
  
\begin{remark}
The optimal constant $\gamma_p$ in~\eqref{eq:Matsaev} does not seem to be known.
However, it satisfies the following bounds (see~\cite[Eqs.~(III.6.11)--(III.6.12)]{gokr70}, which indeed provides better bounds)
\begin{align}
\gamma_p \le \frac{1}{2}\times \begin{cases} \frac{2p-1}{p-1}, & p\in (1,2),\\ 1+p, & p\in [2,\infty). \end{cases}
\end{align}
For further discussion of the constants $\gamma_p$ we refer to~\cite{gokr70}.
\end{remark}

Following the arguments in the proof of Theorem~3.3 in~\cite{AJPR}, one obtains the following result amplifying Theorem~\ref{thmAJPR}~\ref{itmAJPRiii}. % (see also~\cite[Appendix~A]{DSpec}):

\begin{lemma}\label{lem:AJPR}
Let $p>1$ and assume that $\Qr$ satisfies~\eqref{eq:Jsp} with $c=0$ so that the operator $\JIO$ belongs to the Schatten--von Neumann class $\gS_p$. Then there are positive constants $c_p$, $C_p>0$ such that 
\begin{align}\label{eq:JspConstants}
c_pE_p(\Qr)\le \|\JIO\|^p_{\gS_p} \le C_pE_p(\Qr),
\end{align}
where $E_p(\Qr)$ is the integral on the left-hand side in~\eqref{eq:Jsp}.
\end{lemma}

\begin{remark}
The constants in~\eqref{eq:JspConstants} can be made effective. Indeed, following the arguments in the proof of Theorem~3.3 in~\cite{AJPR} and using~\eqref{eq:Matsaev}, one gets the following estimate on the Schatten--von Neumann norm of the operator $\JIO$:
\begin{align}
2^{\nicefrac{-p}{2}}\tilde{E}_p(\Qr) \le \|\JIO\|^p_{\gS_p} \le 2^p \big(\gamma_p +\sqrt{2}+1 \big)^p\tilde{E}_p(\Qr),
\end{align}
where 
\begin{align}
\tilde{E}_p(\Qr)  = \sum_{n\in\Z} \Biggl(2^{n}\int_{2^{n}}^{2^{n+1}} |\Qr(x)|^2dx\Biggr)^{\nicefrac{p}{2}}.
\end{align}
On the other hand, it is an exercise to show that 
\begin{align}
2^{\nicefrac{-p}{2}-1} \tilde{E}_p(\Qr)\le E_p(\Qr) \le 2^{p} \tilde{E}_p(\Qr)\times
\begin{cases} \frac{1}{2^{\nicefrac{p}{2}}-1}, & p\in (0,2],\\[1mm] 1, & p\in (2,\infty).\end{cases}
\end{align}
\end{remark}

Let $\chi$ be a finite non-negative Borel measure on $[0,\infty)$ and suppose that the function $\Qr$ is given by
\begin{align}\label{eq:QrViaChi}
  \Qr(x) = \int_{[x,\infty)}d\chi
 \end{align}
 for almost all $x\in[0,\infty)$.
 Then, by Theorem~\ref{thmAJPR+},  for each $p>\nicefrac{1}{2}$, the operator $\JIO$ belongs to the Schatten--von Neumann class $\gS_p$ if and only if 
\eqref{eq:Jsp+} holds true. Again, one can estimate the Schatten--von Neumann norms in this case.

\begin{lemma}\label{lem:AJPR+}
Let $p>\nicefrac{1}{2}$ and let $\Qr$ be given by~\eqref{eq:QrViaChi} where $\chi$ is a non-negative Borel measure on $[0,\infty)$ satisfying~\eqref{eq:Jsp+}. Then there are positive constants $c_p^+$, $C_p^+>0$ such that 
\begin{align}\label{eq:JspConstants+}
c_p^+E_p^+(\chi)\le \|\JIO\|^p_{\gS_p} \le C_p^+E_p^+(\chi),
\end{align}
where $E_p^+(\chi)$ is the integral on the left-hand side in~\eqref{eq:Jsp+}.
\end{lemma}

For $p>1$ the above estimate follows from~\eqref{eq:JspConstants}. Indeed, denoting 
\begin{align}
 \tilde{E}_p^+(\chi) = % \sum_{n\in\Z} 2^{pn} \Qr(2^n)^p = 
   \sum_{n\in\Z} 2^{pn} \biggl(\int_{[2^n,\infty)} d\chi\biggr)^p,
\end{align}
one easily gets in view of monotonicity of $\Qr$ that 
\begin{align}
2^{-p} \tilde{E}_p^+(\chi) \le \tilde{E}_p(\Qr)  \le \tilde{E}_p^+(\chi).
\end{align}
It then only remains to notice the trivial bound
\begin{align}\label{eq:tiE+via+}
2^{-p-1} \tilde{E}_p^+(\chi) \le E_p^+(\chi) \le 2^p \tilde{E}_p^+(\chi).
%= \int_0^\infty  \biggl(x\int_{[x,\infty)}d\chi\biggr)^p\frac{dx}{x} = \sum_{n\in\Z} \int_{2^n}^{2^{n+1}}  \biggl(x\int_{[x,\infty)}d\chi\biggr)^p\frac{dx}{x}
\end{align}
%%%%%Calculations
%{\scriptsize Indeed, 
%\begin{align*}
%\frac{1}{2^{p+1}} \sum_{n\in\Z} 2^{pn}\Qr(2^n)^p \le \tilde{E}_p^+(\chi) \le 2^p \sum_{n\in\Z} 2^{pn}\Qr(2^n)^p
%\end{align*}
%and
%\begin{align*}
%\frac{1}{2^p} \sum_{n\in\Z} 2^{pn}\Qr(2^n)^p \le \tilde{E}_p^A(\Qr-c) \le  \sum_{n\in\Z} 2^{pn}\Qr(2^n)^p
%\end{align*}}

The case $p\in(\nicefrac{1}{2},1]$ is based on the fact that the operator $\JIO$ admits the factorization
\begin{align}
\JIO = \tilde{\JIO}_+^\ast\tilde{\JIO}_+,
\end{align}
where $\tilde{\JIO}_+\colon L^2[0,\infty)\to L^2([0,\infty);\chi)$ is the integral operator given by
\begin{align}
\tilde{\JIO}_+f(x) = \int_0^x f(t)dt.
\end{align}
This clearly implies that $\JIO\in \gS_p$ exactly when $\tilde{\JIO}_+\in \gS_{2p}$ together with the equality $\|\JIO\|_{\gS_p} = \|\tilde{\JIO}_+\|_{\gS_{2p}}$. Thus, the corresponding two-sided estimate~\eqref{eq:JspConstants+} can be made effective for all $p>\nicefrac{1}{2}$ by following the proof of Theorem~4.4 in~\cite{AJPR}. More specifically, one gets
\begin{align}
2^{-p}\tilde{E}_p^+(\chi) \le \|\tilde{\JIO}_+\|^p_{\gS_{2p}} \le \big(\gamma_p +\sqrt{2}+1 \big)^p\tilde{E}_p^+(\chi),
\end{align}
and then it only remains to recall~\eqref{eq:tiE+via+}.

Finally, let us return to the study of the Schatten--von Neumann properties of generalized indefinite strings. For a given triple $(L,\omega,\dip)$ with $L=\infty$, as before we denote by $\sigma$ the spectrum of the corresponding self-adjoint linear relation $\T$. Then for each $p\ge 1$ we have the estimate
\begin{align}\label{eq:SpforTviaKs}
\frac{1}{2}\|\KIO_\omega\|_{\gS_p} + \frac{1}{2} \|\KIO_\dip\|_{\gS_{\nicefrac{p}{2}}} \le \biggl(\sum_{\lambda\in\sigma} \frac{1}{|\lambda|^p}\biggr)^{\nicefrac{1}{p}} \le \|\KIO_\omega\|_{\gS_p} + 2\|\KIO_\dip\|_{\gS_{\nicefrac{p}{2}}}.
\end{align}
Indeed, by Corollary~\eqref{corPelin}, $\lambda\in \sigma$ if and only if it belongs to the spectrum of the block-operator matrix~\eqref{eqnPeLinear}.
Therefore, the second inequality is an immediate consequence of the corresponding triangle inequality applied to the first unnumbered equation in the proof of Proposition~\ref{propDSpecM}. To prove the first inequality it suffices to notice that the Schatten--von Neumann norm of a block operator matrix is greater than the corresponding Schatten--von Neumann norm of any of its blocks and hence %each block 
\begin{align}
\|\KIO_\omega\|_{\gS_p} & \le \|\T^{-1}\|_{\gS_p}, & \|\sqrt{\KIO}_\dip\|_{\gS_p} & \le \|\T^{-1}\|_{\gS_p}.
\end{align}  
The latter clearly implies the first inequality.
Let us also mention the trivial equality   
  \begin{align}
     \| \tilde{\T}\|_{\gS_p} = \| \KIO_\omega \|_{\gS_p},
   \end{align}
   which holds true for all $p>0$ whenever $\dip$ vanishes identically, which is always the case when $\sigma \subseteq [0,\infty)$. 

Finally, it suffices to mention that the operator $\KIO_\omega$ is unitarily equivalent to the operator $\JIO$ with the symbol $\Wr$ and that the operator $\KIO_\dip$ is unitarily equivalent to the operator $\JIO$ whose symbol is given by~\eqref{eq:QrViaChi} with $\dip$ in place of $\chi$. Therefore, the two-sided estimates in Lemma~\ref{lem:AJPR} and Lemma~\ref{lem:AJPR+} immediately imply the corresponding two sided estimates for the sum in~\eqref{eqnSinSp} by taking into account~\eqref{eq:SpforTviaKs}. The corresponding constants can be made effective and we leave this to the interested reader. Let us only mention the following estimate in the case of the positive spectrum. More specifically, if $(L,\omega,\dip)$ is a generalized indefinite string with $L=\infty$, $\dip$ vanishing identically and $\omega$ being a positive Borel measure on $[0,\infty)$, then  there are positive constants $c_+$, $C_+>0$ such that
\begin{align}\label{eq:SpforTviaW+}
c_+\int_0^\infty  \biggl(x\int_{[x,\infty)}d\omega\biggr)^p\frac{dx}{x} \le \sum_{\lambda\in\sigma} \frac{1}{|\lambda|^p} \le C_+ \int_0^\infty  \biggl(x\int_{[x,\infty)}d\omega\biggr)^p\frac{dx}{x}.
\end{align}

\begin{remark}
Consider the following functionals:
\begin{align}
E_p^A(\Qr) & = \sum_{n\in\Z} \left(2^{n}\int_{2^{n}}^{2^{n+1}} |\Qr(x)|^2dx\right)^{\nicefrac{p}{2}},\\
E_p^B(\Qr) & = \sum_{n\in\Z} \left(2^{n}\int_{2^{n}}^{\infty} |\Qr(x)|^2dx\right)^{\nicefrac{p}{2}},\\
E_p(\Qr) & = \int_0^\infty \left( x\int_x^{\infty} |\Qr(s)|^2ds\right)^{\nicefrac{p}{2}}\frac{dx}{x}.
\end{align}
Clearly,
\begin{align}
E_p^A(\Qr) \le E_p^B(\Qr).
\end{align}
On the other hand, 
\begin{align}\begin{split}
E_p^B(\Qr) & = \sum_{n\in\Z} 2^{np/2}\left(\int_{2^{n}}^{2^{n+1}}+\int_{2^{n+1}}^{\infty} |\Qr(x)|^2dx\right)^{\nicefrac{p}{2}} \\
& \le \begin{cases} E_p^A(\Qr) + 2^{\nicefrac{-p}{2}}E_p^B(\Qr), & p\in (0,2],\\ 2^{\nicefrac{p}{2}-1}E_p^A(\Qr) + 2^{-1} E_p^B(\Qr), & p\in (2,\infty).\end{cases}
\end{split}\end{align}
The first inequality follows from the trivial one
\begin{align}
(a+b)^q\le a^q+b^q,\qquad a,b\ge0,\quad q\in (0,1),
\end{align}
and the second estimate follows from Minkowski's inequality.
Therefore, we end up with the bound
\begin{align}
E_p^B(\Qr) \le E_p^A(\Qr)\times
\begin{cases} \frac{2^{\nicefrac{p}{2}}}{2^{\nicefrac{p}{2}}-1}, & p\in (0,2],\\[1mm] 2^{\nicefrac{p}{2}}, & p\in (2,\infty).\end{cases}
\end{align}
Finally, taking into account that
\begin{align}
\begin{split}
E_p(\Qr) & = \sum_{n\in\Z}\int_{2^{n}}^{2^{n+1}} \left( x\int_x^{\infty} |\Qr(s)|^2ds\right)^{\nicefrac{p}{2}}\frac{dx}{x}
\end{split} \\
\begin{split}
\text{estimate from above:  }& \le \sum_{n\in\Z} 2^{n}  \left( 2^{n+1} \int_{2^{n}}^{\infty} |\Qr(s)|^2ds\right)^{\nicefrac{p}{2}}\frac{dx}{2^{n}} \\
& \le 2^{p/2} E_p^B(\Qr)
\end{split} \\[2mm]
\begin{split}
\text{estimate from below:  } & \ge \sum_{n\in\Z} 2^{n}  \left( 2^{n} \int_{2^{n+1}}^{\infty} |\Qr(s)|^2ds\right)^{\nicefrac{p}{2}}\frac{dx}{2^{n+1}} \\
& \ge \sum_{n\in\Z} 2^{\nicefrac{-p}{2}}  \left( 2^{n+1} \int_{2^{n+1}}^{\infty} |\Qr(s)|^2ds\right)^{\nicefrac{p}{2}}\frac{dx}{2^{1}} \\
& = 2^{\nicefrac{-p}{2}-1} E_p^B(\Qr).
\end{split}
\end{align}
and using monotonicity, we end up with the two-sided estimate
\begin{align}
2^{-p/2-1} E_p^B(\Qr)\le E_p(\Qr) \le 2^{p/2} E_p^B(\Qr).
\end{align}

\end{remark}

%Moreover, both $\JIO_+$ and $\JIO_-$ are bounded exactly when so is $\JIO$.

% % % % % % % % % % % % % % % % % % % % % % %
% % % % % % % % % % % % % % % % % % % % % % %
\noindent
\section*{Acknowledgments}
We are grateful to Mark Malamud, Roman Romanov and Harald Woracek for useful discussions and hints with respect to the literature.
% % % % % % % % % % % % % % % % % % % % % % %
% % % % % % % % % % % % % % % % % % % % % % %

% % % % % % % % % % % % % % % % % % % % % % %
% % % % % % % % % % % % % % % % % % % % % % %

\end{document}